\newtheorem{lemma}{Lemma}
\newtheorem{theorem}{Theorem}
\newtheorem{corollary}{Corollary}
\newtheorem{proposition}{Proposition}
\newtheorem{assumption}{Assumption}
\newtheorem{claim}[theorem]{Claim}
\theoremstyle{remark}
\newtheorem{remark}{Remark}
\newtheorem{example}{Example}
\def\tr{^{\intercal}}
\def\Z{\mathbb{Z}}
\def\Re{{\mathbb R}}
\def\conv{\mathop{\rm conv}}
\def\proj{\mathop{\rm proj}}
\def\Re{{\mathbb R}}
\def\BLP{BL\&P }
\newcommand{\vc}[1]{\bm{#1}}	%defines vector format
\newcommand{\mac}[1]{\mathcal{#1}}	%defines font format
\title{A Unifying Convexification Framework for Chance-Constrained Programs via Bilinear Extended Formulations over a Simplex}
\author[1]{Danial Davarnia\thanks{\texttt{ddavarn@purdue.edu}}}
\author[2]{Hamed Rahimian\thanks{\texttt{hrahimi@clemson.edu}}}
\affil[1]{Edwardson School of Industrial Engineering, Purdue University, West Lafayette, IN 47907}
\affil[2]{Department of Industrial Engineering,  Clemson University, Clemson, SC 29634}
\date{}
\begin{document}

\maketitle

\begin{abstract}

Chance-constrained programming is a widely used framework for decision-making under uncertainty, yet its mixed-integer reformulations involve nonconvex mixing sets with a knapsack constraint, leading to weak relaxations and computational challenges. Most existing approaches for strengthening the relaxations of these sets rely primarily on extensions of a specific class of valid inequalities, limiting both convex hull coverage and the discovery of fundamentally new structures. In this paper, we develop a novel convexification framework that reformulates chance-constrained sets as bilinear sets over a simplex in a lifted space and employs a step-by-step aggregation procedure to derive facet-defining inequalities in the original space of variables. Our approach generalizes and unifies established families of valid inequalities in the literature while introducing new ones that capture substantially larger portions of the convex hull. Main contributions include: 
%(i) a new aggregation-based convexification technique for bilinear sets with binary variables constrained to a simplex,
(i) the development of a new aggregation-based convexification technique for bilinear sets over a simplex in a lower-dimensional space;
(ii) the introduction of a novel bilinear reformulation of mixing sets with a knapsack constraint---arising from single-row relaxations of chance constraints---over a simplex, which enables the systematic derivation of strong inequalities in the original variable space; and (iii) the characterization of facet-defining inequalities within a unified framework that contains both existing and new families. Preliminary computational experiments demonstrate that our inequalities describe over 90\% of the facet-defining inequalities of the convex hull of benchmark instances, significantly strengthening existing relaxations and advancing the polyhedral understanding of chance-constrained programs.

%\smallskip
\textbf{Keywords:} Chance constraints, Convex hull, Bilinear formulation, Cutting planes, Lift-and-project.

\end{abstract}

%%%%%%%%%%%%%%%%%%%%%%%%%%%%%%%%%%%%%%%%%%%%%%%%%%%%%%%%%%%%%%%%%%%%%%

\section{Introduction} \label{sec:introduction}
%%%%%%%%%%%%%%%%%%%%%%%%%%%%%%%%%%%%%%%%%%%%%%%%%%%%%%%%%%%%%%%%%%%%%%%%%%%%%%%%%%%%%%%%%%%%
%%%%%%%%%%%%%%%%%%%%%%%%%%%%%%%%%%%%%%%%%%%%%%%%%%%%%%%%%%%%%%%%%%%%%
%%%%%%%%%%%%%%%%%%%%%%%%%%%%%%%%%%%%%%%%%%%%%%
%%%%%%%%%%%%%%%%%%%%%%%

Many decision-making problems in uncertain environments, arising in engineering and operation, must account for reliability requirements, service levels, or risk measures \cite{andrieu2010model,pathy2024value}. Chance-constrained programming (CCP), first introduced by \cite{charnes1959chance}, has become a widely used modeling framework to address such needs. This research area continues to attract significant interest, with recent advances in both theory and computation \citep{pena2020solving,rahimian2023data,shenglong2024}; see also \citet{kucukyavuz2023survey} for a comprehensive survey.

%\smallskip
In this paper, we study the polyhedral structure of an important substructure that arises in mixed-integer programming (MIP) reformulations of CCPs over a finite support, known as the \textit{mixing set with a knapsack constraint}. The nonconvexity of these sets has motivated extensive research aimed at understanding their polyhedral properties. A fundamental direction in this line of work is the development of explicit valid inequalities for their convex hull. Such inequalities can be embedded directly into mathematical models to strengthen relaxations or incorporated into branch-and-cut algorithms \cite{luedtke2014branch,liu2016decomposition,pathy2024value}, thereby improving both computational efficiency and solution quality.

%\smallskip
An important class of valid inequalities for CCPs is the \textit{strengthened mixing} inequalities. In their seminal work, \citet{luedtke2010integer} studied a \textit{single-row} relaxation of the chance constraint, modeled as a mixing set with a 0-1 knapsack constraint, and derived a family of valid (and sometimes facet-defining) inequalities that generalize the classical \textit{star} or \textit{mixing} inequalities \cite{atamturk2000mixed,gunluk2001mixing}.
Building on this, \citet{kuccukyavuz2012mixing} developed compact extended formulations for joint chance constraints, % and introduced a blending procedure to transmit the strength of single-row relaxations to multi-row intersections, 
thereby unifying and extending earlier inequalities in \cite{luedtke2010integer}. 
Later, \citet{abdi2016mixing} characterized a broader class of valid inequalities for the general-probability case, subsuming those in \citet{luedtke2010integer,kuccukyavuz2012mixing}, while \citet{zhao2017polyhedral} derived an even more general family encompassing all known explicit inequalities for the single-row relaxation. % and proposed blending mechanisms that, under certain conditions, yield facet-defining inequalities for intersections of multiple mixing sets. More recently, \citet{liu2019intersection} analyzed intersections of two mixing sets with shared continuous variables---a key substructure in joint CCPs---and derived valid (sometimes facet-defining) inequalities. 

%They defined the \emph{quantile cut closure} and established several key properties: (i) the closure has a finite description; (ii) for linear CCPs, the closure is polyhedral; and (iii) repeated application of quantile closures converges to the convex hull, with finite convergence in purely integer cases. 

%{\color{red} We note that a line of research on the polyhedral study of CCPs focuses on \textit{quantile} cuts  \cite{xie2018quantile,qiu2014covering}, derived in the original variable space. 
%It is shown that for a linear CCP, studied in this paper, the closure of the infinite family of quantile cuts has a finite polyhedral description \cite[Corollary~1]{xie2018quantile}. %%Moreover, a recursive application of such closure operations exactly characterizes the projection of the convex hull of the mixing set with a knapsack constraint in the limit. 
%While such cuts lead to valid inequalities for the convex hull of the mixing set with a knapsack constraint in the original space of variables, this differs from our work, as the desired set is characterized by a recursive application of the closure operations in the limit \cite[Proposition~2]{xie2018quantile}. Moreover, these cuts are not explicit as deriving them requires solving several single-scenario optimization problems.}

%However, different from our work, even their separation in the first round of  is NP-hard \cite[Theorem~7]{xie2018quantile}.} % and \cite[Proposition~5]{qiu2014covering}.}

%\smallskip
Although the literature provides several families of valid inequalities for mixing sets, two main limitations remain. First, computational evidence shows that existing families capture only a limited portion of the convex hull, yielding relatively weak relaxations that often fail to deliver notable bound improvements when added to the initial relaxation of CCPs.
Second, the prevailing approach to deriving new inequalities has largely focused on a specific class, typically by starting from a known family and expanding it through the addition of new elements. This constrains the discovery of fundamentally different classes of inequalities with distinct forms that fall outside previously established families. As a result, much of the existing work presents extended variants of known inequalities and relies on case-by-case analysis to prove validity. This approach not only lacks an intuitive or constructive methodology to inspire the derivation of new families with different structures, but also limits coverage of the convex hull---contributing to the relatively weak relaxation bounds noted earlier.
These limitations highlight the need for continued efforts in designing systematic procedures to identify new and richer classes of valid inequalities, with the potential to improve optimality gap closure more meaningfully during the solution process.

%\smallskip
To address these limitations, we propose a novel convexification approach through a different lens: reformulating the original mixing set with a knapsack constraint as a bilinear set over a simplex in a lifted space, which allows for employing a systematic aggregation method to characterize important families of facet-defining inequalities for the convex hull of the set in the original space of variables. 
The motivation for this work stems from \cite{davarnia2017simultaneous} and \cite{khademnia:da:2024}, which introduced step-by-step aggregation methods for bilinear sets to obtain a closed-form representation of their convex hull in the original space of variables. Computational experiments across various application areas, from network interdiction to transportation, have demonstrated two key advantages of this convexification approach compared to classical methods such as disjunctive programming \cite{balas:1979,balas:1985} and the reformulation-linearization technique \cite{sherali:ad:1990,sherali:al:1992}: (i) it enables the rapid generation of valid inequalities for bilinear sets to tighten convex relaxations, and (ii) it provides a systematic process to derive explicit facet-defining inequalities of the convex hull in the original space of variables.
We refer the reader to \cite{tawarmalani:ri:ch:2010,locatelli:sch:2014,boland:de:ka:mo:ri:2017,muller:se:gl:2020,gupte:ka:ri:wa:2020,fampa:lee:2021,delpia:kh:2021,davarnia:ki:2025,rahimian:me:2024} for a collection of works on the polyhedral study and convexification approaches for bilinear problems under various settings.

%\smallskip
Building on this foundation, the present paper extends the existing literature by introducing a new convexification method for a bilinear set with a different structure from those previously studied.
The main differences between the structures we study here and those in \cite{davarnia2017simultaneous,khademnia:da:2024} are as follows: (i) we focus on sets that contain disjoint bilinear \textit{constraints}, whereas previous works examined the graph of disjoint bilinear functions over linear constraints; (ii) referring to the disjoint bilinear terms in the constraints as $x_iy_j$, we derive the convex hull in the space of the $x$ variables only, while previous studies obtained the convex hull in the space of both $x$ and $y$ variables; and (iii) in our case, the $y$ variables are binary, whereas in the prior works, the $y$ variables can be continuous.
These differences necessitate the design of a new aggregation method with components that differ from those proposed in prior works.

%\smallskip
This extension enables us to apply our convexification framework to mixing sets with a knapsack constraint, through a novel reformulation of the problem as a bilinear set over a simplex in a lifted space. 
%Our framework proceeds in two steps. First, we reformulate the underlying set as an extended bilinear model aligned with the structure studied in this paper. Second, we apply the developed convexification method to this bilinear set, deriving a rich family of valid inequalities that not only subsumes several known families in the literature but also introduces new inequalities for the convex hull of chance-constrained sets.
The advantages of this convexification framework are two-fold.
First, it provides a step-by-step aggregation procedure for deriving valid inequalities for the mixing set with a knapsack constraint (see Theorem~\ref{thm:chance_lift_generic}). Unlike existing approaches, this yields a constructive recipe for generating multiple classes of valid inequalities in different forms.
Second, it leads to the closed-form characterization of a broad family of valid inequalities that not only encompasses many existing families from the literature but also introduces new ones (see Theorem~\ref{thm: chance_liftVIAgeneric_permutation}). Together, these inequalities collectively capture a significantly larger portion of the convex hull, as confirmed by our computational experiments on benchmark instances.

%\smallskip
In summary, the contributions of this work are as follows.
First, we develop a step-by-step aggregation technique to derive facet-defining inequalities for the convex hull of bilinear sets containing disjoint bilinear terms of the form $x_i y_j$ in the $x$-variable space, where the $y$ variables are binary and constrained to lie in a simplex. We believe this aggregation-based lift-and-project technique may be of independent interest beyond the class of CCPs studied in this paper.
Second, we propose a novel reformulation of the mixing set with a knapsack constraint as a bilinear set over a simplex in a lifted space, allowing for the application of our convexification procedure to obtain valid inequalities. 
Third, our convexification procedure provides a systematic framework to characterize important families of valid inequalities that not only subsume several well-known families in the literature but also unify them under a common structure.
Fourth, we identify new classes of valid inequalities that capture large portions of the convex hull not attainable by existing approaches.
Fifth, preliminary computational experiments on benchmark instances show that our new inequalities describe over 90\% of the facet-defining inequalities of the convex hulls, representing a substantial improvement over prior results.
Collectively, these contributions advance our understanding of the polyhedral structure of CCPs.

%\smallskip
The remainder of this paper is organized as follows. Section~\ref{sec:background} provides background on the MIP reformulation of CCPs as a mixing set with a knapsack constraint, along with a review of explicit families of valid inequalities derived in the literature. Section~\ref{sec:BLP} introduces our aggregation-based convexification procedure for a general bilinear set containing disjoint bilinear terms in the constraints, which may also be of independent interest beyond CCPs. In Section~\ref{sec:chance_bilinear}, we present a reformulation of the mixing set with a knapsack constraint as a bilinear set in a lifted space, and derive families of valid inequalities for this set in the original variable space via our convexification procedure. Section~\ref{sec:numerical} reports preliminary computational experiments on benchmark instances from the literature, demonstrating the extent to which our families of inequalities capture the convex hull compared to existing approaches. We end with conclusions and directions of future research 
in Section~\ref{sec:discussion}. 

\medskip
\textbf{Notation.} 
We denote vectors in boldface.
%For any $n \in \N$, we refer to the ordered index set $\{1,\dotsc,n\}$ by $[n]$.
For a set $S \subseteq \{(\vc{x}; \vc{y}) \in \Re^n \times \Re^m\}$, we refer to the convex hull and projection of $S$ on to the $\vc{x}$-space by $\conv(S)$ and $\proj_{\vc{x}}(S)$, respectively. 
%Given $n \in \N$, we denote by $\vc{e}^i$ for $i \in \{1, \dotsc, n\}$ the $i$-th unit vector in $\Re^n$.
%We refer to the origin in $\Re^n$ by $\vc{e}^0$.
%{\color{red} $\vc{1}$ denotes the column vector of all 1's.} 
Given $x \in \Re$, we define $(x)^+ = \max\{x, 0\}$.
%Given $n \in \N$, we use $[n]$ to denote the index set $\{1,\dotsc,n\}$. 
Further, we define the indicator function $\mathbb{I}(x) = 0$ if $x = 0$, and $\mathbb{I}(x) = 1$ otherwise.

\section{Background on Chance-Constrained Programming} \label{sec:background}

%In this section, we investigate the polyhedral structure of chance-constrained sets, which arise in various modeling approaches for stochastic optimization problems across diverse domains, including healthcare and energy. Due to the mixed-integer nature of these sets, significant research has been devoted to understanding their polyhedral properties. An important direction in this area involves deriving explicit valid inequalities for the convex hull of these sets, which can be directly incorporated into mathematical models to tighten relaxations and enhance both solution time and quality.

%\smallskip
%Although the literature contains several families of such inequalities, computational evidence indicates that they capture only a limited portion of the convex hull, resulting in relatively weak relaxations. This highlights the need for continued efforts to discover richer classes of valid inequalities that can improve gap closure during optimization.

%\smallskip
%To address this challenge, we propose a novel convexification approach through a different lens: reformulating the original chance-constrained set as a lifted bilinear set. This reformulation aligns with the structure studied in Section ??, thereby enabling the application of the \BLP procedure to systematically derive a broader family of explicit valid inequalities for its convex hull. Our approach unifies and extends existing results in the literature, and offers a new avenue for strengthening formulations of chance-constrained problems. 

We consider a probabilistically-constrained linear program with a random right-hand side:
\begin{subequations}
\label{form:chance}
\begin{align}
   \min_{\vc{u}} \quad  &   \vc{c}\tr \vc{u} \\
\text{s.t.} \quad & \mathbb{P}(W \vc{u} \geq \vc{\xi}) \geq 1 - \varepsilon, \\
&  \vc{u} \in U,
\end{align} 
\end{subequations}
where $U \subseteq \Re_+^{d_u}$ is a polyhedron, $W \in \Re^{d_\xi \times d_u}$ is a matrix, $\vc{\xi} \in \Re^{d_\xi}$ is a random vector, $\varepsilon \in [0,1]$ is a risk parameter, and $\vc{c} \in \Re^{d_u}$. 
Assume that $\vc{\xi}$ has a finite support on $m$ scenarios, i.e., $\vc{\xi} \in \{\vc{\xi}_1, \dotsc, \vc{\xi}_m\}$ with $\mathbb{P}(\vc{\xi} = \vc{\xi}_i) = \pi_i > 0$ for each $i \in M:=\{1,\dotsc,m\}$ and $\sum_{i=1}^m \pi_i = 1$. 
We assume without loss of generality that $\vc{\xi}_i \geq \vc{0}$ for each $i \in M$. Moreover, we impose that $\pi_i \leq \varepsilon$ for each $i$, since if $\pi_i > \varepsilon$, then $W \vc{u} \geq \vc{\xi}_i$ must hold for any feasible $\vc{u}$. In such cases, we can incorporate these inequalities directly into the definition of $U$ and exclude the corresponding scenario $i$ from further consideration. 
We also note that \eqref{form:chance} can represent a sampling-based approximation of a CCP, where the samples are given by $\{\vc{\xi}_1, \dotsc, \vc{\xi}_m\}$ and $\pi_i=1/m$ for all $i \in M$.

\subsection{MIP Reformulation of CCP} \label{subsec:MIP}

In this section, we present the widely-used formulation of the CCP, defined in \eqref{form:chance}, as a MIP. For each $i \in M$, we introduce a binary variable $x_i \in \{0, 1\}$, where $x_i=0$ enforces the constraint $W\vc{u} \ge \vc{\xi}_i$. Since $\vc{\xi}_i \ge \vc{0}$, this also implies that $W\vc{u} \ge \vc{0}$ for every feasible $\vc{u}$. Defining $\vc{v} = W\vc{u}$, we can then reformulate \eqref{form:chance} as 
\begin{subequations}
\label{form:chance_MIP}
\begin{align}
\min_{\vc{v}, \vc{u}, \vc{x}}  \quad & \vc{c}\tr \vc{u} \\
\text{s.t.}   \quad & \vc{u} \in U, \\
& \vc{v} - W\vc{u}=\vc{0}, \\
& x_i=0 \Rightarrow \vc{v} \ge \vc{\xi}_i, \quad  i  \in M, \\ 
%&  \sum_{i \in M}\pi_i x_i \leq \varepsilon, \\
&  \vc{\pi}\tr \vc{x}\leq \varepsilon, \label{eq:CCP5} \\
&  \vc{x} \in \{0,1\}^m, \; \vc{v} \ge \vc{0}. 
\end{align}
\end{subequations}

In this formulation, the inequality \eqref{eq:CCP5} is sometimes referred to as either a \textit{chance constraint} or a \textit{knapsack constraint}; in this paper, we adopt the latter terminology.
It is common in the literature to study the polyhedral structure of a subset of the feasible region of \eqref{form:chance_MIP} in the space of variables $(\vc{v};\vc{x})$, defined as 
\begin{equation*} 
\mac{G} = \Biggl\{ (\vc{v},\vc{x}) \in \Re_+^{d_\xi} \times \{0,1\}^m\, \Bigg| \,
\begin{array}{l}
x_i=0 \Rightarrow \vc{v}\ge \vc{\xi}_i, \;  i \in M\\
%\sum_{i \in M}\pi_i x_i \leq \varepsilon 
\vc{\pi}\tr \vc{x}\leq \varepsilon
\end{array}
\Biggr\}.   
\end{equation*}
We can write $\mac{G}$ as 
\begin{equation*}
\mac{G} = \bigcap_{j=1}^{d_\xi} \Bigl\{(\vc{v}, \vc{x}): (v_j, \vc{x}) \in \mac{F}_j\Bigr\},    
\end{equation*}
where, for $j\in \{1,\dotsc,d_\xi\}$, we define 
\begin{equation*} 
\mac{F}_j = \Biggl\{ (v_j,\vc{x}) \in \Re_+ \times \{0,1\}^m\, \Bigg| \,
\begin{array}{l}
x_i=0 \Rightarrow v_j \ge \xi_{ij}, \;  i \in M\\
%\sum_{i \in M}\pi_i x_i \leq \varepsilon 
\vc{\pi}\tr \vc{x}\leq \varepsilon
\end{array}
\Biggr\}.   
\end{equation*}
Therefore, a natural first step in developing a strengthened formulation for $\mac{G}$ is to develop a strengthened formulation for each $\mac{F}_j$. In particular, we have that any facet-defining inequality for
$\conv(\mac{F}_j)$ is also facet-defining for $\conv(\mac{G})$. This follows because the set of $m + 1$ affinely independent points in $\mac{F}_j$ that serves as the support of a facet-defining inequality of $\conv(\mac{F}_j)$ can be trivially extended to a set of $m+d_\xi$ affinely independent
supporting points of this inequality in $\mac{G} \subseteq \Re^{m+d_\xi}$ by appropriately assigning values to the $v_i$ variables for each  $i \neq j$; see \cite{luedtke2010integer,kuccukyavuz2012mixing} for a detailed account.
Thus, we consider a generic set 
\begin{equation} \label{eq:genprob_chance}
\mac{F}_c = \Biggl\{ (z,\vc{x}) \in \Re_+ \times \{0,1\}^m\, \Bigg| \,
\begin{array}{l}
x_i=0 \Rightarrow z \ge h_i, \;  i \in M\\
%\sum_{i \in M}\pi_i x_i \leq \varepsilon 
\vc{\pi}\tr \vc{x}\leq \varepsilon
\end{array}
\Biggr\},
\end{equation}
by dropping the index $j$, replacing $v_j$ with $z$, and substituting $\xi_{ij}$ with $h_j$ for each $i \in M$. 
In what follows, we refer to $\mac{F}_c$ as the \textit{mixing set with a knapsack constraint}.
To perform our polyhedral analysis of this set, we begin by introducing notation and terminology. Without loss of generality, we assume that the right-hand-side values are ordered, i.e., $h_1\ge h_2 \ge \cdots \ge h_m$. Define $p=\max\{k: \sum_{i=1}^{k} \pi_i \le \varepsilon\}$. 
In addition, let $\{\left< 1 \right>,\dotsc, \left< m \right>\}$ be a permutation of $\{1,\dotsc,m\}$ such that $\pi_{\left< 1 \right>} \le \dotsc \le \pi_{\left< m \right>}$. Define $\vartheta=\max\{k: \sum_{i=1}^{k} \pi_{\left< i \right>} \le \varepsilon\}$. 
By definition, it is clear that $p,\vartheta \in M$. 

An important special case of $\mac{F}_c$ arises when the underlying probability distribution is uniform.
In this case, we have $\pi_i=1/m$ for all $i \in M$, which implies $p=\vartheta= \lfloor m \varepsilon \rfloor $.
Consequently, the knapsack constraint $\vc{\pi}\tr \vc{x}\leq \varepsilon$ reduces to the cardinality constraint $\vc{1}\tr \vc{x}\leq p$, where $\vc{1}$ is the vector of all ones with matching dimension.
Due to its numerous practical applications, this special case has received significant attention in the literature.
For ease of reference, we define it separately here and refer to it throughout the paper as the \textit{mixing set with a cardinality constraint}, formally given by
\begin{equation} \label{eq:cardinality}
\mac{F}_c^= = \Biggl\{ (z,\vc{x}) \in \Re_+ \times \{0,1\}^m\, \Bigg| \,
\begin{array}{l}
x_i=0 \Rightarrow z \ge h_i, \;  i \in M\\
%\sum_{i \in M}\pi_i x_i \leq \varepsilon 
\vc{1}\tr \vc{x}\leq p
\end{array}
\Biggr\}.
\end{equation}

Next, in Section \ref{sec:chance_existing}, we review existing results on the polyhedral structure of $\mac{F}_c$ that provide closed-form expression of valid inequalities for this set.

\subsection{Existing Families of Valid Inequalities in Closed Form}
\label{sec:chance_existing}

Set $\mac{F}_c$, as defined in \eqref{eq:genprob_chance},  has been extensively studied in the literature. When $\varepsilon=1$ (i.e., the chance constraint is trivially satisfied), $\mac{F}_c$ reduces to a \textit{mixing} set defined as
\begin{equation} \label{eq:mixing}
\mac{F}_c^1 = \Bigl\{ (z,\vc{x}) \in \Re_+ \times \{0,1\}^m\, \Big \vert 
\begin{array}{l}
x_i=0 \Rightarrow z\ge h_i, \;  i \in M\\ 
\end{array}
\Bigr\},    
\end{equation}
and is studied in \cite{gunluk2001mixing,atamturk2000mixed,guan2007sequential,miller2003tight}. 
The following result has served as the foundation for deriving valid inequalities for the CCP over the past several decades.

\begin{proposition}{\cite{gunluk2001mixing,atamturk2000mixed,guan2007sequential}}
\label{prop:mixing}
Consider $\{t_{1}, \cdots, t_{l}\} \subseteq M$ with $t_1 < \cdots < t_l$. Then, the {\it star inequalities}  
\begin{equation} \label{eq:star_ineq}
    z + \sum_{\iota=1}^{l} (h_{t_{\iota}} - h_{t_{\iota+1}}) x_{t_{\iota}} \ge h_{t_{1}}, 
\end{equation}
where $h_{t_{l+1}}:=0$, are valid for the mixing set \eqref{eq:mixing}. 
\end{proposition}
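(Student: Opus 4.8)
The plan is to establish validity by a direct, elementary argument over feasible points of $\mac{F}_c^1$, relying on three observations: (a) because the data are ordered as $h_1 \ge \cdots \ge h_m$ and $t_1 < \cdots < t_l$, every coefficient $h_{t_\iota} - h_{t_{\iota+1}}$ is nonnegative (the last one being $h_{t_l} - h_{t_{l+1}} = h_{t_l} \ge 0$); (b) the coefficients telescope, so that $\sum_{\iota=1}^{k}(h_{t_\iota} - h_{t_{\iota+1}}) = h_{t_1} - h_{t_{k+1}}$ for any $k$; and (c) the only substantive restrictions available on a feasible point are the implication $x_i = 0 \Rightarrow z \ge h_i$ together with $z \ge 0$.

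Concretely, I would fix an arbitrary $(z,\vc{x}) \in \mac{F}_c^1$ and split on the position of the first zero among $x_{t_1}, \ldots, x_{t_l}$. Set $\iota^\ast := \min\{\iota \in \{1,\ldots,l\} : x_{t_\iota} = 0\}$ if such an index exists, and $\iota^\ast := l+1$ otherwise (recall $h_{t_{l+1}} := 0$). In both cases one has $z \ge h_{t_{\iota^\ast}}$: if $\iota^\ast \le l$ this is precisely the implication constraint for scenario $t_{\iota^\ast}$, and if $\iota^\ast = l+1$ it is just $z \ge 0 = h_{t_{l+1}}$. Using observation (a) to discard the nonnegative terms with index $\iota \ge \iota^\ast$, the fact that $x_{t_\iota} = 1$ for all $\iota < \iota^\ast$ by minimality, and then observation (b),
$$z + \sum_{\iota=1}^{l}(h_{t_\iota} - h_{t_{\iota+1}})\,x_{t_\iota} \;\ge\; z + \sum_{\iota=1}^{\iota^\ast-1}(h_{t_\iota} - h_{t_{\iota+1}}) \;=\; z + h_{t_1} - h_{t_{\iota^\ast}} \;\ge\; h_{t_1},$$
where the final step applies $z \ge h_{t_{\iota^\ast}}$. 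Since $(z,\vc{x})$ was arbitrary, the star inequality \eqref{eq:star_ineq} is valid for the mixing set \eqref{eq:mixing}.

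I do not anticipate any real obstacle: once the right case split is chosen, the estimate is a one-line telescoping argument. The only points requiring a little care are the boundary situation in which every $x_{t_\iota}$ equals one -- handled uniformly above by the dummy index $\iota^\ast = l+1$ and the convention $h_{t_{l+1}} = 0$ -- and the (routine) verification that all coefficients are nonnegative, which is exactly what licenses dropping the tail of the sum rather than having to bound it.
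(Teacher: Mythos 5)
Your argument is correct. Note that the paper does not actually prove Proposition~\ref{prop:mixing}; it is quoted as a known background result from \cite{gunluk2001mixing,atamturk2000mixed,guan2007sequential}, so there is no in-paper proof to compare against. Your case split on the first index $\iota^\ast$ with $x_{t_{\iota^\ast}}=0$, the telescoping identity $\sum_{\iota=1}^{\iota^\ast-1}(h_{t_\iota}-h_{t_{\iota+1}})=h_{t_1}-h_{t_{\iota^\ast}}$, and the observation that the ordering $h_1\ge\cdots\ge h_m\ge 0$ makes all discarded coefficients nonnegative, together constitute the standard validity proof of the star inequalities, and the boundary case $\iota^\ast=l+1$ is handled correctly via the convention $h_{t_{l+1}}:=0$ and $z\ge 0$.
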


%The star inequalities \eqref{eq:star_ineq} are facet-defining when $t_1=1$ and sufficient to define $\conv(\mac{F}_c^1)$.  
Later, \cite{luedtke2010integer} extends Proposition~\ref{prop:mixing} to the case where $\varepsilon \leq 1$, as defined in $\mac{F}_c$ given by \eqref{eq:genprob_chance}.
\begin{proposition}
    {\cite[Theorem~2]{luedtke2010integer}}
\label{prop:chance}
Consider $\{t_{1}, \cdots, t_{l}\} \subseteq \{1,\dotsc,p\}$ with $t_1 < \cdots < t_l$. Then, the {\it strengthened star inequalities} 
\begin{equation} \label{eq:chance_ineq}
    z + \sum_{\iota=1}^{l} (h_{t_{\iota}} - h_{t_{\iota+1}}) x_{t_{\iota}} \ge h_{t_{1}}, 
\end{equation}
where $h_{t_{l+1}}:=h_{p+1}$, are valid for $\mac{F}_c$.
\end{proposition}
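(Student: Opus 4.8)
The plan is to verify validity directly by a case analysis on the value of $\vc{x}$ at a feasible point $(z,\vc{x})\in\mac{F}_c$, mirroring the argument behind Proposition~\ref{prop:mixing} but exploiting the knapsack constraint to obtain the improved right-hand-side term $h_{t_{l+1}}:=h_{p+1}$. Fix an index set $\{t_1,\dots,t_l\}\subseteq\{1,\dots,p\}$ with $t_1<\dots<t_l$, and let $(z,\vc{x})$ be feasible. First I would dispose of the easy case: if $x_{t_\iota}=1$ for \emph{every} $\iota\in\{1,\dots,l\}$, then the left-hand side of \eqref{eq:chance_ineq} equals $z+\sum_{\iota=1}^l(h_{t_\iota}-h_{t_{\iota+1}})=z+h_{t_1}-h_{p+1}\ge h_{t_1}$ since $z\ge 0\ge -h_{p+1}$ (recall $z\ge 0$ and, with the ordering $h_1\ge\cdots\ge h_m$, all $h_i\ge 0$ may be assumed, or at worst $h_{p+1}\le h_{t_1}$ makes the bound trivial). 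The substance is in the complementary case.

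Next, suppose not all of the $x_{t_\iota}$ equal $1$. Let $\iota^\ast=\min\{\iota: x_{t_\iota}=0\}$, so $x_{t_1}=\cdots=x_{t_{\iota^\ast-1}}=1$ and $x_{t_{\iota^\ast}}=0$. From the indicator implication at scenario $t_{\iota^\ast}$ we get $z\ge h_{t_{\iota^\ast}}$. Then the left-hand side of \eqref{eq:chance_ineq} is at least
\[
h_{t_{\iota^\ast}}+\sum_{\iota=1}^{\iota^\ast-1}\bigl(h_{t_\iota}-h_{t_{\iota+1}}\bigr)
= h_{t_{\iota^\ast}} + h_{t_1} - h_{t_{\iota^\ast}} = h_{t_1},
\]
where the middle sum telescopes and I drop the nonnegative contributions of the remaining terms $\iota\ge\iota^\ast$ (each $h_{t_\iota}-h_{t_{\iota+1}}\ge 0$ by the ordering, multiplied by $x_{t_\iota}\ge 0$). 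This already recovers validity with right-hand side $h_{t_1}$, which is Proposition~\ref{prop:mixing} restricted to $\{1,\dots,p\}$; the point of the knapsack constraint is only that the coefficient modification $h_{t_{l+1}}:=h_{p+1}$ (rather than $0$) keeps the inequality valid, and this is exactly what the all-ones case above used. So in fact the only place the knapsack constraint $\vc{\pi}\tr\vc{x}\le\varepsilon$ enters is to guarantee that the all-ones case does not force an infeasibility that would make the bound vacuous — more precisely, one should check that there indeed exist feasible points with $x_{t_1}=\cdots=x_{t_l}=1$ (so the stronger right-hand side is not achieved by default), using $t_l\le p$ and the definition $p=\max\{k:\sum_{i=1}^k\pi_i\le\varepsilon\}$, which ensures $\sum_{i=1}^{p}\pi_i\le\varepsilon$ and hence $x=\mathbf{1}_{\{1,\dots,p\}}$ is knapsack-feasible.

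The one genuine subtlety — and what I expect to be the main obstacle to state cleanly — is handling the sign of $h_{p+1}$ and the telescoping when $h_{t_{\iota^\ast}}<h_{p+1}$ cannot occur (it cannot, since $t_{\iota^\ast}\le t_l\le p<p+1$ forces $h_{t_{\iota^\ast}}\ge h_{p+1}$ by the ordering): one must be careful that in the case $\iota^\ast>1$ the dropped term at $\iota=\iota^\ast$ in the telescoping sum is $(h_{t_{\iota^\ast}}-h_{t_{\iota^\ast+1}})x_{t_{\iota^\ast}}=0$ anyway, so the bound $h_{t_1}$ is attained exactly, confirming the inequality is tight and not merely valid. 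To close the argument I would simply note that these two cases — all indices active versus a first inactive index — are exhaustive, so $z+\sum_{\iota=1}^l(h_{t_\iota}-h_{t_{\iota+1}})x_{t_\iota}\ge h_{t_1}$ holds on all of $\mac{F}_c$, which is the claim. (Facet-definingness, when it holds, would be argued separately by exhibiting $m+1$ affinely independent feasible points on the face, but the proposition as stated only asserts validity.)
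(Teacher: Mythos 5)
The paper does not actually prove Proposition~\ref{prop:chance}; it is imported from \cite{luedtke2010integer}. Measured against the standard argument, your second case (a first index $\iota^\ast$ with $x_{t_{\iota^\ast}}=0$) is correct: the telescoping and the nonnegativity of the dropped terms (using $t_l\le p$, so $h_{t_l}-h_{p+1}\ge 0$) go through. The genuine gap is in the all-ones case. There you need $z+h_{t_1}-h_{p+1}\ge h_{t_1}$, i.e.\ $z\ge h_{p+1}$, but you justify it by ``$z\ge 0\ge -h_{p+1}$'', which only gives $z\ge -h_{p+1}$ --- the wrong sign. Since the paper assumes $\vc{\xi}_i\ge\vc{0}$, we have $h_{p+1}\ge 0$, so this does not close the case; nor does the fallback ``$h_{p+1}\le h_{t_1}$'', which only yields that the left-hand side is at least $z\ge 0$, not at least $h_{t_1}$. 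As written, your argument proves only the unstrengthened star inequality (with $h_{t_{l+1}}:=0$), which never uses the knapsack constraint; the step that makes the inequality \emph{strengthened} is exactly the one that is missing.

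The missing idea is the actual role of the knapsack constraint, which you have misidentified: it is not there to certify that all-ones points exist (that is a tightness consideration, irrelevant to validity), but to prove that \emph{every} feasible point of $\mac{F}_c$ satisfies $z\ge h_{p+1}$. By maximality of $p$ we have $\sum_{i=1}^{p+1}\pi_i>\varepsilon$, so $\vc{\pi}\tr\vc{x}\le\varepsilon$ forbids $x_1=\cdots=x_{p+1}=1$; hence some $i^\ast\in\{1,\dotsc,p+1\}$ has $x_{i^\ast}=0$, forcing $z\ge h_{i^\ast}\ge h_{p+1}$ by the ordering of the $h_i$. With this lemma the all-ones case reads $z+h_{t_1}-h_{p+1}\ge h_{p+1}+h_{t_1}-h_{p+1}=h_{t_1}$, and in fact your two cases merge: set $\iota^\ast=l+1$ when no $x_{t_\iota}$ vanishes, note that $z\ge h_{t_{\iota^\ast}}$ holds in all cases (with the convention $h_{t_{l+1}}=h_{p+1}$), and run the same telescoping bound uniformly. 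Incorporating this observation repairs the proof; without it, the case that carries the entire content of the strengthening is unproved.
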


%Inequalities \eqref{eq:chance_ineq} are facet-defining for $\conv(\mac{F}_c)$ if and only if $t_1=1$, although they are not sufficient to describe $\conv(\mac{F}_c)$ \cite{luedtke2010integer}. 
The following result extends the strengthened star inequalities for the special case of a uniform probability distribution by adding new elements that allow variables with negative coefficients to be included in the inequality.

%consider the special case with a uniform probability distribution for the chance constraint, i.e., $\pi_i=1/m$ for $i \in M$ in the definition of $\mac{F}_c$. In this case, $p=\lfloor m \varepsilon \rfloor$. 

\begin{proposition}{\cite[Theorem~4]{luedtke2010integer}}
    \label{prop:chance_lift}
%Let $\bar{\bar{\mac{F}}}_c$ be the special case of $\mac{F}_c$, where the knapsack constraint $\vc{\pi}\tr \vc{x} \leq \varepsilon$ in its description is replaced with the cardinality constraint $\vc{1}\tr \vc{x} \le p$ for some $p \in M$.    
%Suppose $\pi_i=1/m$ for all $i \in M$. 
Consider the mixing set with a cardinality constraint $\mac{F}_c^=$.
Let $r \in \{1,\dotsc,p\}$, $\{t_1,\dotsc, t_{l}\} \subseteq \{1,\dotsc, r\}$, and $\{q_1,\dotsc, q_{p-r}\} \subseteq \{p+1, \dotsc, m\}$, where  $t_1 < \cdots < t_l$ and $q_1 < \cdots <q_{p-r}$. 
Define $\phi_{q_1}=h_{r+1}-h_{r+2}$ and 
\begin{equation*}
    \phi_{q_\iota}=\max\Bigl\{\phi_{q_{\iota-1}}, h_{r+1}- h_{r+\iota+1} - \sum_{k=1}^{\iota-1} \phi_{q_{k}}\Bigr\}, \quad \iota=2, \dotsc, p-r. 
\end{equation*}
Then, the inequalities 
\begin{equation} \label{eq:chance_lift}
    z + \sum_{\iota=1}^{l} (h_{t_{\iota}} - h_{t_{\iota+1}}) x_{t_{\iota}} + \sum_{\iota=1}^{p-r} \phi_{q_{\iota}} (1-x_{q_{\iota}}) \ge h_{t_{1}}, 
\end{equation}
where $h_{t_{l+1}}:=h_{r+1}$, are valid for $\mac{F}_c^=$. 
\end{proposition}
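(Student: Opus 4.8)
The plan is to establish validity of \eqref{eq:chance_lift} by a case analysis on the value of $z$ relative to the ordered thresholds $h_1 \ge h_2 \ge \cdots \ge h_m$, exploiting the cardinality constraint $\vc{1}\tr\vc{x}\le p$ to control how many of the relevant binary variables can be set to $1$. Fix a feasible point $(z,\vc{x})\in\mac{F}_c^=$. Let $S=\{i\in M: x_i=1\}$, so $|S|\le p$, and recall that for every $i\notin S$ we have $z\ge h_i$. The key observation is that, since $h_1\ge\cdots\ge h_m$, knowing that at most $p$ indices are in $S$ means that at least one of the indices $\{1,\dots,p+1\}$ lies outside $S$; more generally, among any $k+1$ of the smallest-threshold indices in a prefix, not all can be ``switched off.''

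First I would handle the contribution of the positive-coefficient terms $\sum_{\iota=1}^{l}(h_{t_\iota}-h_{t_{\iota+1}})x_{t_\iota}$ with $h_{t_{l+1}}:=h_{r+1}$. If $x_{t_\iota}=1$ for all $\iota$, this sum telescopes to $h_{t_1}-h_{r+1}$, and the left-hand side of \eqref{eq:chance_lift} is at least $z + (h_{t_1}-h_{r+1}) + \sum_\iota\phi_{q_\iota}(1-x_{q_\iota})$; since the $\phi$ terms are nonnegative it suffices to show $z\ge h_{r+1} - \sum_\iota \phi_{q_\iota}(1-x_{q_\iota})$. If instead some $x_{t_\iota}=0$, pick the smallest such $\iota$; then $z\ge h_{t_\iota}$ and the partial telescoping of the preceding full terms already yields $z+\sum(h_{t_k}-h_{t_{k+1}})x_{t_k}\ge h_{t_\iota}+(h_{t_1}-h_{t_\iota})=h_{t_1}$, so \eqref{eq:chance_lift} holds outright (again using $\phi_{q_\iota}\ge 0$). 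This reduces everything to the first case.

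The heart of the argument — and the step I expect to be the main obstacle — is verifying, in the first case, the inequality $z \ge h_{r+1} - \sum_{\iota=1}^{p-r}\phi_{q_\iota}(1-x_{q_\iota})$ using the recursive definition of the $\phi_{q_\iota}$. Here I would argue as follows: among the indices $\{1,\dots,r\}\cup\{q_1,\dots,q_{p-r}\}$ — a set of exactly $p$ indices — and the extra index $r+1$, the cardinality bound forces at least one to be off; combined with $x_{t_\iota}=1$ for the $t_\iota\subseteq\{1,\dots,r\}$, one deduces that if exactly $j$ of the $q_\iota$ satisfy $x_{q_\iota}=1$ then at least $p-r-j$... actually the clean statement is: let $j=|\{\iota: x_{q_\iota}=0\}|$; then at least one index in $\{r+1\}\cup\{q_\iota: x_{q_\iota}=1\}$ must be off, which combined with a counting argument shows $z\ge h_{r+j+1}$ (the $(j{+}1)$-st smallest threshold beyond $h_r$ among the ``reachable'' ones). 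The recursion $\phi_{q_\iota}=\max\{\phi_{q_{\iota-1}},\,h_{r+1}-h_{r+\iota+1}-\sum_{k<\iota}\phi_{q_k}\}$ is then exactly engineered so that $\sum_{k=1}^{j}\phi_{q_{(k)}}\ge h_{r+1}-h_{r+j+1}$ for every $j$ (where $q_{(1)}<\cdots<q_{(j)}$ are the off-indices, and one uses monotonicity of $\phi$ in the index to reduce to the case of the first $j$ of them); rearranging gives $h_{r+1}-\sum_{\iota:x_{q_\iota}=0}\phi_{q_\iota}\le h_{r+j+1}\le z$, as needed. I would prove the summation bound $\sum_{k=1}^{j}\phi_{q_{(k)}}\ge h_{r+1}-h_{r+j+1}$ by induction on $j$ directly from the recursion, which is the only genuinely computational piece. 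Finally, I would confirm that all $\phi_{q_\iota}\ge 0$ (immediate from $\phi_{q_1}=h_{r+1}-h_{r+2}\ge 0$ and the max in the recursion), closing the gaps used above.
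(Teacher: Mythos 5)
Your proposal is correct. Note first that the paper does not actually prove Proposition~\ref{prop:chance_lift}: it is quoted from \cite[Theorem~4]{luedtke2010integer}, and within this paper's framework it would be recovered only indirectly, as the special case of the \BLP inequality \eqref{eq:chance_lift_ineq_generic_permutation} in Theorem~\ref{thm: chance_liftVIAgeneric_permutation} with $\delta_{t_\iota}=0$ (cf.\ Corollary~\ref{cor:special_case}), which itself rests on the bilinear lift-and-project aggregation of Theorem~\ref{thm:chance_lift_generic}. Your argument is a direct, self-contained validity check, closer in spirit to the original lifting-based derivation but more elementary: you verify the inequality pointwise on $\mac{F}_c^=$ rather than constructing it. All the key steps hold up. The reduction to the case $x_{t_\iota}=1$ for all $\iota$ is sound because the positive coefficients $h_{t_\iota}-h_{t_{\iota+1}}$ telescope and the $\phi$'s are nonnegative. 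The central counting claim $z\ge h_{r+j+1}$, where $j=|\{\iota: x_{q_\iota}=0\}|$, is correct: if all of $\{1,\dotsc,r+j+1\}$ were switched on, then together with the $p-r-j$ on-indices among the $q_\iota$ (disjoint from $\{1,\dotsc,r+j+1\}$ precisely because $q_\iota\ge p+1\ge r+j+1$, which is where the hypothesis $Q\subseteq\{p+1,\dotsc,m\}$ is essential) one would have $p+1$ indices on, violating the cardinality constraint. And the bound $\sum_{\iota\in J}\phi_{q_\iota}\ge\sum_{\iota=1}^{j}\phi_{q_\iota}\ge h_{r+1}-h_{r+j+1}$ follows from monotonicity of $\phi$ and is read off directly from the $\iota=j$ branch of the recursion (no induction is even needed). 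What your approach buys is a short, transparent proof of this one family; what the paper's aggregation route buys is a uniform mechanism that produces this family alongside the strictly larger classes in Theorems~\ref{thm:chance_lift_generic} and \ref{thm: chance_liftVIAgeneric_permutation}, at the cost of the heavier machinery of Section~\ref{sec:BLP}. One presentational remark: the sentence claiming that ``at least one index in $\{r+1\}\cup\{q_\iota: x_{q_\iota}=1\}$ must be off'' is garbled (those $q_\iota$ are on by definition); the correct statement, which your counting argument actually establishes, is that some index in $\{1,\dotsc,r+j+1\}$ must be off.
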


%The inequalities \eqref{eq:chance_lift} are facet-defining for $\conv(\mac{F}_c)$ when $t_1=1$ \cite{luedtke2010integer}. 

% \begin{example}
% \label{ex:2}
% Let's consider the same setup as in Example \ref{ex:1}, with $\pi_i=1/10$ for $i \in M$. 
% Let $r=2$, $T=\{1,2\}$, and $Q=\{5,6\}$. Then, $\phi_{5}=3$ and $\phi_{6}=\max\{3,8-3\}=5$. So, \eqref{eq:chance_lift} yields 
% \begin{equation*}
%     z+ 2 x_1 + 4 x_2 + 3(1-x_5)+ 5(1-x_{6})\ge 20. 
% \end{equation*}
% \end{example}

The following result extends the family of inequalities \eqref{eq:chance_lift} by allowing the permutation of variables with negative coefficients and expanding their index range. 

\begin{proposition}{\cite[Theorem~3]{kuccukyavuz2012mixing}}
    \label{prop:chance_lift_simge}
%Let $\bar{\bar{\mac{F}}}_c$ be the special case of $\mac{F}_c$, where the knapsack constraint $\vc{\pi}\tr \vc{x} \leq \varepsilon$ in its description is replaced with the cardinality constraint $\vc{1}\tr \vc{x} \le p$ for some $p \in M$.    
%Suppose $\pi_i=1/m$ for all $i \in M$. 
Consider the mixing set with a cardinality constraint $\mac{F}_c^=$.
Let $\{t_1, \dotsc, t_{l}\} \subseteq \{1,\dotsc, r\}$ where $t_1 < \cdots < t_l$, and $\{q_1, \dotsc, q_{p-r}\}$ be such that $q_\iota \ge r+\iota+1$ for $\iota \in \{1,\dotsc,p-r\}$.  
Define $\phi_{q_1}=h_{r+1}-h_{r+2}$ and 
\begin{equation*}
    \phi_{q_\iota}=\max\Bigl\{\phi_{q_{\iota-1}}, h_{r+1}- h_{r+\iota+1} - \sum_{\substack{k=1\\q_k \ge r+\iota+1}}^{\iota-1} \phi_{q_{k}}\Bigr\}, \quad \iota=2, \dotsc, p-r. 
\end{equation*}
Then, the inequalities 
\begin{equation} \label{eq:chance_lift_simge}
    z + \sum_{\iota=1}^{l} (h_{t_{\iota}} - h_{t_{\iota+1}}) x_{t_{\iota}} + \sum_{\iota=1}^{p-r} \phi_{q_{\iota}} (1-x_{q_{\iota}}) \ge h_{t_{1}}, 
\end{equation}
where $h_{t_{l+1}}:=h_{r+1}$, are valid for $\mac{F}_c^=$. 
\end{proposition}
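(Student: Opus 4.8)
The plan is to prove validity directly. Fix any feasible $(z,\vc{x})\in\mac{F}_c^=$, let $S=\{i\in M:x_i=1\}$ (so $|S|\le p$, and $z\ge h_i$ whenever $i\notin S$), and show the left-hand side of \eqref{eq:chance_lift_simge} is at least $h_{t_1}$. The first observation is that every coefficient there is nonnegative: $h_{t_\iota}-h_{t_{\iota+1}}\ge 0$ since $t_1<\dots<t_l\le r$, the $h_i$ are nonincreasing, and $h_{t_{l+1}}=h_{r+1}$; and $\phi_{q_1}=h_{r+1}-h_{r+2}\ge 0$ while the recursion makes $\phi_{q_1}\le\phi_{q_2}\le\dots\le\phi_{q_{p-r}}$, so each term $\phi_{q_\iota}(1-x_{q_\iota})\ge 0$. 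The two tools I will lean on are this monotonicity of the $\phi_{q_\iota}$ and the telescoping identity $\sum_{\iota=1}^{l}(h_{t_\iota}-h_{t_{\iota+1}})=h_{t_1}-h_{r+1}$. I will also use the reformulation of the recursion that, because $q_j\ge r+j+1$ for every $j$,
\[
\sum_{k\le j,\ q_k\ge r+j+1}\phi_{q_k}\ \ge\ h_{r+1}-h_{r+j+1},\qquad j=1,\dots,p-r.
\]

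Next, split on the values $x_{t_1},\dots,x_{t_l}$. If $x_{t_a}=0$ for some $a$, pick the smallest such $a$; then $z\ge h_{t_a}$, and using $x_{t_\iota}=1$ for $\iota<a$ and discarding the nonnegative terms for $\iota\ge a$ gives $\sum_{\iota=1}^{l}(h_{t_\iota}-h_{t_{\iota+1}})x_{t_\iota}\ge h_{t_1}-h_{t_a}$, so the left-hand side is at least $z+(h_{t_1}-h_{t_a})\ge h_{t_1}$. The only remaining case is $x_{t_1}=\dots=x_{t_l}=1$ (this also covers $l=0$): there the telescoping identity collapses \eqref{eq:chance_lift_simge} to the single inequality
\[
(\star)\qquad z+\sum_{\iota=1}^{p-r}\phi_{q_\iota}(1-x_{q_\iota})\ \ge\ h_{r+1},
\]
which no longer involves the $t$'s, so the whole proof reduces to proving $(\star)$ for every feasible point.

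To prove $(\star)$: if $x_i=0$ for some $i\le r+1$ then $z\ge h_i\ge h_{r+1}$ and we are done, so assume $\{1,\dots,r+1\}\subseteq S$. Let $i_0=\min\{i:x_i=0\}$, so $i_0\ge r+2$, $z\ge h_{i_0}$, and $\{1,\dots,i_0-1\}\subseteq S$; hence $i_0-1\le|S|\le p$ and $\iota_0:=i_0-r-1\in\{1,\dots,p-r\}$. It then suffices to prove $\sum_{\iota:\,x_{q_\iota}=0}\phi_{q_\iota}\ge h_{r+1}-h_{i_0}$, since adding $z\ge h_{i_0}$ yields $(\star)$. The reformulated recursion with $j=\iota_0$ (note $r+\iota_0+1=i_0$) gives $\sum_{k\in C}\phi_{q_k}\ge h_{r+1}-h_{i_0}$ for $C:=\{k\le\iota_0:q_k\ge i_0\}$. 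The catch — and the step I expect to be the main obstacle — is that some $k\in C$ may have $x_{q_k}=1$, so its contribution is missing from the target sum and must be recovered. This is done by a counting-and-rearrangement argument: such a $k$ has $q_k\in S\cap\{i_0+1,\dots,m\}$, and every index $\iota>\iota_0$ also has $q_\iota>i_0$; since $|S\cap\{i_0+1,\dots,m\}|\le p-(i_0-1)=p-r-\iota_0$ and the $q_\iota$ are distinct, a pigeonhole count shows the number of $\iota>\iota_0$ with $x_{q_\iota}=0$ is at least the number of $k\in C$ with $x_{q_k}=1$; since every such $\iota$ exceeds every such $k$ and $\phi_{q_\bullet}$ is nondecreasing in its index, pairing them off gives $\sum_{\iota>\iota_0,\,x_{q_\iota}=0}\phi_{q_\iota}\ge\sum_{k\in C,\,x_{q_k}=1}\phi_{q_k}$. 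Finally, $\{\iota:x_{q_\iota}=0\}$ is the disjoint union of $\{k\in C:x_{q_k}=0\}$ and $\{\iota>\iota_0:x_{q_\iota}=0\}$ (any $\iota$ with $x_{q_\iota}=0$ satisfies $q_\iota\ge i_0$, so it lies in $C$ or has $\iota>\iota_0$), and adding the two estimates gives $\sum_{\iota:\,x_{q_\iota}=0}\phi_{q_\iota}\ge\sum_{k\in C}\phi_{q_k}\ge h_{r+1}-h_{i_0}$, completing $(\star)$.

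A couple of loose ends round this out: the degenerate case $r=p$, where there are no $q$-variables and $(\star)$ is simply $z\ge h_{p+1}$ — valid because $z<h_{p+1}$ would force $x_1=\dots=x_{p+1}=1$, violating the knapsack (this is also Proposition~\ref{prop:chance} with $l=0$) — and the bookkeeping in the pigeonhole step, which relies on the distinctness of the $q_\iota$ (implicit in the notation $\{q_1,\dots,q_{p-r}\}$) and a careful split of $q$-values into those below $i_0$, equal to $i_0$, and above $i_0$, including the boundary $\iota_0=p-r$.
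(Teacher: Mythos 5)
Your proof is correct. The key steps all check out: the reduction (via nonnegativity of all coefficients and the telescoping identity) to the single inequality $(\star)$; the rewriting of the recursion as $\sum_{k\le j,\,q_k\ge r+j+1}\phi_{q_k}\ge h_{r+1}-h_{r+j+1}$ (valid because $q_j\ge r+j+1$ puts $j$ itself in the index set); and, most importantly, the pigeonhole/rearrangement step. For that step, the counting is sound: the indices $k\in C$ with $x_{q_k}=1$ and the indices $\iota>\iota_0$ with $x_{q_\iota}=1$ map injectively to distinct elements of $S\cap\{i_0+1,\dotsc,m\}$, whose cardinality is at most $p-(i_0-1)=p-r-\iota_0$, which equals the total number of indices $\iota>\iota_0$; hence enough indices $\iota>\iota_0$ have $x_{q_\iota}=0$ to dominate, term by term via the monotonicity $\phi_{q_1}\le\dotsb\le\phi_{q_{p-r}}$, the missing contributions from $C$. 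Note, however, that this route is entirely different from the paper's. The paper does not prove this proposition at all --- it is quoted from the literature --- and within the paper's own framework it is recovered only indirectly, as a special case of Theorem~\ref{thm: chance_liftVIAgeneric_permutation} (set $\delta_{t_\iota}=0$; see Corollary~\ref{cor:special_case}), whose proof goes through the bilinear reformulation $\mac{S}_c$, the BL\&P aggregation with carefully chosen weights, and a case-by-case verification that multipliers $\beta_j$ satisfying \eqref{A} exist. Your argument is a self-contained, elementary validity check in the original $(z,\vc{x})$-space; what it buys is transparency and independence from the lifting machinery, while what the paper's aggregation route buys is generality --- the same derivation simultaneously produces the strictly larger families of Theorems~\ref{thm:chance_lift_generic} and \ref{thm: chance_liftVIAgeneric_permutation}, which your direct case analysis would not extend to without substantial rework.
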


%{\color{blue} IN CASE WE GENERALIZE \cite{kuccukyavuz2012mixing} IN  THEOREM 3: }

Subsequently, \cite{abdi2016mixing,zhao2017polyhedral} extended the result in Proposition~\ref{prop:chance_lift_simge} to the general-probability setting for $\mac{F}_c$. The next result presents the family of inequalities introduced in \cite[Theorem~1]{zhao2017polyhedral} under Assumption~1 of that paper, which subsumes those derived in \cite[Theorem~14]{abdi2016mixing}. 

\begin{proposition}{\cite[Theorem~1]{zhao2017polyhedral}}
    \label{prop:chance_lift_permutation}
    Consider $r \in \{1,\dotsc,p\}$, $l \in \{1,\dotsc,r\}$, $v \in \{1,\dotsc, \vartheta-r\}$. %, and define $L:=\{1,\dotsc, l\}$, $N :=\{1,\dotsc, v\}$. 
    Consider a sequence of integers $\{s_0, s_1, \dotsc, s_{v+1}\}$ such that $1\le s_1 \le \dotsc \le s_v \le s_{v+1} =p-r+1$.
    Let $P:=\{t_1, \dotsc, t_{l}\} \subseteq \{1,\dotsc, r\}$, where $t_1 < \cdots < t_l$. Moreover, let $Q: = \{q_1, \dotsc, q_{v}\} \subseteq \{r+s_1+1, \dotsc, m\}$, where $q_\iota \ge r+\min\{1+s_\iota, s_{\iota+1}\}$ and 
    \begin{equation}
        \sum_{k=1}^{r+s_\iota} \pi_k + \sum_{k=\iota}^{v} \pi_{w_k}  > \varepsilon, \quad \sum_{k=1}^{r+s_\iota-1} \pi_k + \sum_{k=\iota}^{v} \pi_{w_k} \le \varepsilon, \label{s} 
    \end{equation} 
    for $\iota \in \{1,\dotsc,v\}$, where $\{w_1, \dotsc, w_v\}$ is a permutation of $Q$ such that $\pi_{w_1} \ge \dotsc \ge \pi_{w_v}$. 
    Define  $\phi_{q_{1}} = h_{r+s_1}-h_{r+s_2}$ and 
%\begin{subequations}
    \begin{equation*}
     %& \phi_{q_{1}} = h_{r+s_1}-h_{r+s_2},\\
     \phi_{q_{\iota}} = \max\Biggl\{\phi_{q_{\iota-1}}, h_{r+s_1}- h_{r+s_\iota+1} - \sum_{\substack{k=1 \\ q_k \ge r + \min\{1+s_\iota, s_{\iota+1}\}}}^{\iota-1} \phi_{q_{k}} \Biggr\}, \quad \iota=2, \dotsc, v.
    \end{equation*}
%\end{subequations}
%with a tie broken according to the order $\{1,\dotsc,p-r\}$.
Then, the inequalities 
\begin{equation} \label{eq:chance_lift_permutation}
    z + \sum_{\iota=1}^{l} (h_{t_{\iota}} - h_{t_{\iota+1}}) x_{t_{\iota}} + \sum_{\iota=1}^{v} \phi_{q_{\iota}} (1-x_{q_{\iota}}) \ge h_{t_{1}}, 
\end{equation}
with $h_{t_{l+1}}:=h_{r+s_1}$,   
are valid for $\mac{F}_c$.

\end{proposition}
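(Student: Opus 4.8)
The plan is to prove validity directly, by showing that every integer point $(\bar z,\bar{\vc x})\in\mac{F}_c$ satisfies \eqref{eq:chance_lift_permutation}; alternatively one could recover the inequality as a special case of the convexification framework developed later in this paper, but the combinatorial route is self-contained. I would first record three routine facts. (a) Since $s_1\le s_2$ and $h_1\ge\cdots\ge h_m$, we have $\phi_{q_1}=h_{r+s_1}-h_{r+s_2}\ge 0$, and the recursion then gives $\phi_{q_\iota}\ge\phi_{q_{\iota-1}}$; hence the $\phi_{q_\iota}$ are nonnegative and nondecreasing in $\iota$. (b) For every $j\in\{1,\dots,v\}$ the recursion telescopes to the partial-sum bound $\sum_{k=1}^{j}\phi_{q_k}\ge h_{r+s_1}-h_{r+s_{j+1}}$: $\phi_{q_j}$ is at least the second argument of its defining maximum, and restoring the omitted (nonnegative) terms only increases the left side. (c) Each coefficient $h_{t_\iota}-h_{t_{\iota+1}}$ is nonnegative, using $t_1<\cdots<t_l\le r<r+s_1$ and the convention $h_{t_{l+1}}:=h_{r+s_1}$.

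Next I would split on whether some $t$-variable of $\bar{\vc x}$ vanishes. In that easy case, with $a:=\min\{\iota:\bar x_{t_\iota}=0\}$, the indicator constraint yields $\bar z\ge h_{t_a}$, the positive part of \eqref{eq:chance_lift_permutation} is at least $\sum_{\iota=1}^{a-1}(h_{t_\iota}-h_{t_{\iota+1}})=h_{t_1}-h_{t_a}$ (drop the nonnegative tail by (c), telescope the head), and the negative part is nonnegative by (a); summing gives the right-hand side $h_{t_1}$. In the remaining case $\bar x_{t_\iota}=1$ for all $\iota\in\{1,\dots,l\}$, so the positive part equals $h_{t_1}-h_{r+s_1}$ exactly, and \eqref{eq:chance_lift_permutation} reduces to the core inequality
\begin{equation*}
\bar z+\sum_{\iota=1}^{v}\phi_{q_\iota}(1-\bar x_{q_\iota})\ \ge\ h_{r+s_1}.
\end{equation*}

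To prove the core inequality, let $g:=\lvert\{\iota:\bar x_{q_\iota}=1\}\rvert$, so exactly $v-g$ of the $q$-variables are zero and, because the $\phi$'s are nondecreasing, $\sum_{\iota=1}^{v}\phi_{q_\iota}(1-\bar x_{q_\iota})\ge\sum_{\iota=1}^{v-g}\phi_{q_\iota}\ge h_{r+s_1}-h_{r+s_{v-g+1}}$ by fact (b). It therefore suffices to show $\bar z\ge h_{r+s_{v-g+1}}$, and by the indicator constraints this follows once we produce an index $i\le r+s_{v-g+1}$ with $\bar x_i=0$. So I would argue by contradiction, assuming $\bar x_i=1$ for all $i\le r+s_{v-g+1}$. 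If $g=0$ this says $\bar x_i=1$ for all $i\le p+1$ (recall $r+s_{v+1}=p+1$), forcing $\sum_{i=1}^{p+1}\pi_i\le\vc{\pi}\tr\bar{\vc x}\le\varepsilon$ and contradicting the maximality in the definition of $p$. If $g\ge 1$, I would invoke the first relation in \eqref{s} at index $\iota=v-g+1$: the $\pi$-mass of $\{1,\dots,r+s_{v-g+1}\}$ together with that of the $g$ smallest-probability members of $Q$ (namely $w_{v-g+1},\dots,w_v$) already exceeds $\varepsilon$; since $\bar{\vc x}$ equals $1$ on $\{1,\dots,r+s_{v-g+1}\}$ and on $g$ members of $Q$, whose total mass is at least that of the $g$ smallest, the mass of $\{i:\bar x_i=1\}$ exceeds $\varepsilon$, contradicting $\vc{\pi}\tr\bar{\vc x}\le\varepsilon$.

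The step I expect to be the real obstacle is this last one, where the two relations in \eqref{s} must be converted into the claimed mass bound even though the $g$ one-valued members of $Q$ need not be disjoint from $\{1,\dots,r+s_{v-g+1}\}$. Controlling that overlap is precisely what the hypotheses $q_\iota\ge r+\min\{1+s_\iota,s_{\iota+1}\}$ and the $\pi$-orderings ($\langle\cdot\rangle$ and $w_1,\dots,w_v$) are for, and it is exactly the place where a careless argument would wrongly ``extend'' the star inequalities \eqref{eq:star_ineq} to $\varepsilon<1$. I would handle it by an exchange/counting argument: split the one-valued $q$-indices into those inside and those outside $\{1,\dots,r+s_{v-g+1}\}$, bound each group's $\pi$-contribution using $q_\iota\ge r+\min\{1+s_\iota,s_{\iota+1}\}$ together with the (tight) second relation of \eqref{s}, and combine to get the strict mass excess. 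Everything else — the three preliminary facts, the two-case reduction, and the telescoping in (b) — is routine.
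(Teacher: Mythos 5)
The paper does not actually prove this proposition---it is imported verbatim from \cite[Theorem~1]{zhao2017polyhedral}---so your proposal has to stand on its own. Your preliminary facts, the split on whether some $x_{t_\iota}$ vanishes, and the reduction to the core inequality $\bar z+\sum_{\iota}\phi_{q_\iota}(1-\bar x_{q_\iota})\ge h_{r+s_1}$ are all fine. The genuine gap is in the step you yourself flag as the obstacle, but it sits one step earlier than where you locate it: the bound $\sum_{\iota=1}^{v}\phi_{q_\iota}(1-\bar x_{q_\iota})\ge\sum_{\iota=1}^{v-g}\phi_{q_\iota}$, which uses only monotonicity of the $\phi$'s and forgets \emph{which} $q$'s are zero, is too lossy, and the resulting sufficient condition ``produce an index $i\le r+s_{v-g+1}$ with $\bar x_i=0$'' is false at some feasible points. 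No exchange/counting argument on the $\pi$-masses can repair this, because the target knapsack violation simply does not occur there.

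Concretely, take $m=7$, $\varepsilon=0.5$, $\vc{\pi}=(0.2,\,0.205,\,0.02,\,0.05,\,0.02,\,0.05,\,0.455)$, $\vc{h}=(20,18,14,11,6,5,4)$, so $p=\vartheta=5$; choose $r=1$, $l=1$, $t_1=1$, $v=2$, $s_1=1$, $s_2=3$, $s_3=p-r+1=5$, $Q=\{q_1,q_2\}=\{4,6\}$. All hypotheses hold: $q_1=4\ge r+\min\{1+s_1,s_2\}=3$, $q_2=6\ge r+\min\{1+s_2,s_3\}=5$, and \eqref{s} is satisfied at $\iota=1,2$ (e.g., $\pi_1+\pi_2+\pi_4+\pi_6=0.505>0.5\ge\pi_1+\pi_4+\pi_6=0.3$ and $\sum_{k=1}^{4}\pi_k+\pi_{w_2}=0.525>0.5\ge\sum_{k=1}^{3}\pi_k+\pi_{w_2}=0.475$). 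Then $\phi_{q_1}=h_2-h_4=7$, and since $q_1=4<r+\min\{1+s_2,s_3\}=5$ the term $\phi_{q_1}$ is \emph{not} subtracted in the recursion, so $\phi_{q_2}=\max\{7,\,h_2-h_5\}=12$, and \eqref{eq:chance_lift_permutation} reads $z+2x_1+7(1-x_4)+12(1-x_6)\ge 20$. The point $\bar{\vc{x}}=(1,1,1,1,0,0,0)$, $\bar z=h_5=6$ is feasible ($\sum_{i=1}^{4}\pi_i=0.475\le\varepsilon$) and satisfies the inequality with equality ($6+2+12=20$). Here $g=1$, so your chain gives $\sum_{\iota=1}^{v-g}\phi_{q_\iota}=\phi_{q_1}=7$ and demands $\bar z\ge h_{r+s_2}=11$, i.e., a zero at some index $\le 4$; none exists, and there is no knapsack contradiction because the one-valued member of $Q$ (namely $q_1=4$) lies inside $\{1,\dots,4\}$, so the mass of the ones is only $0.475\le\varepsilon$. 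Validity at this point is rescued only because $\phi_{q_2}=12=h_2-h_5$ exceeds $\phi_{q_1}$---precisely because the condition $q_k\ge r+\min\{1+s_\iota,s_{\iota+1}\}$ excluded $\phi_{q_1}$ from the subtraction defining $\phi_{q_2}$. A correct proof must therefore condition on the actual set of zero-valued $q$'s and use the index-restricted partial sums in the recursion (this is what Zhao et al.\ do, and what the paper's Theorem~\ref{thm: chance_liftVIAgeneric_permutation} reproduces in the uniform case via the BL\&P machinery); your monotonicity-only lower bound cannot be patched from within the mass-counting step.
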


%The inequalities in \eqref{eq:chance_lift_permutation} are facet-defining for $\conv(\mac{F}_c)$ if $t_1=1$, $\pi_{q_1} \ge \dotsc \ge \pi_{q_v}$, and 
%\begin{equation*}
%        \sum_{k=1}^{v} \pi_{q_k} + \pi_{j} \le \varepsilon,   
%\end{equation*} 
%for $j \in M \setminus (P \cup Q)$. 
%Moreover, the inequalities in \eqref{eq:chance_lift_permutation} are facet-defining for $\conv(\mac{F}_c)$ only if $t_1=1$ and 
%\begin{equation*}
%        \sum_{k=1}^{r+s_\iota-1} \pi_k + \sum_{k=\iota}^{v} \pi_{w_k} \le \varepsilon, 
%\end{equation*} 
%for $\iota \in N$ with $s_\iota \ge 1$ \cite{zhao2017polyhedral}. 

%\smallskip
%{\color{red} Observe that for the special case of a uniform probability distribution, with $p=\tau$, Proposition~\ref{prop:chance_lift_permutation} requires that $s_\iota=p-r-v+\iota$ for $\iota \in [v]$, enforced by \eqref{s}. In particular, for the special case $v=p-r$, Proposition~\ref{prop:chance_lift_permutation} reduces to Proposition~\ref{prop:chance_lift_simge}.} 

The results presented in Propositions~\ref{prop:mixing}--\ref{prop:chance_lift_permutation} illustrate the progression of closed-form valid inequalities for $\mac{F}_c$ over the years, with each subsequent result subsuming the preceding ones as special cases. As seen from these results, the fundamental structure of the inequalities has remained largely unchanged, originating from the star inequality~\eqref{eq:star_ineq}, with gradual extensions achieved by adding new elements to broaden the family.
Although this process has yielded an extensive family of inequalities---whose most general form is presented in Proposition~\ref{prop:chance_lift_permutation}---their structural foundation remains constrained by the core form of the star inequality. Consequently, several important classes of valid inequalities for $\mac{F}_c$ still cannot be represented within the family of \eqref{eq:chance_lift_permutation}, as we illustrate next.

%{\color{red} Maybe we can add the example of \cite{luedtke2010integer} here that shows the shortcoming of their result.}

%\smallskip
\begin{example}
\label{ex:1}
Consider an instance of $\mac{F}_c^=$ with $m=10$, $\{h_1, \dotsc, h_m\}=\{20,18,14,11,6,5,4,3,2,1\}$, and $p= \vartheta = 4$. 
Then, it is easy to verify that the constraint 
\begin{equation} \label{eq:Ex1}
    z+ 6 x_1 + 2 x_4 + 3(1-x_5)+ 3(1-x_{6})\ge 20 
\end{equation}
is facet-defining for $\conv(\mac{F}_c^=)$. However, this inequality does not belong to the family of inequalities described in \eqref{eq:chance_lift_permutation}, as outlined next.
Based on the structure of the above inequality, the setup in Proposition~\ref{prop:chance_lift_permutation} would require setting $r = 4$, $P=\{1,4\}$, and $Q=\{5,6\}$.
However, since $p=r=\vartheta$, the conditions in that proposition enforce $Q=\emptyset$, which conflicts with the requirement $Q=\{5,6\}$ for this inequality. 
Moreover, the coefficients of variables in $P$, namely $x_1$ and $x_4$, do not match the values prescribed in Proposition~\ref{prop:chance_lift_permutation}. In particular, the coefficient of $x_1$, which is $6$, does not match $(h_1 - h_4)=9$,  and the coefficient of $x_4$, which is $2$, does not match $(h_4 - h_5)=5$ as $s_1=1$.
As a result, the above inequality exhibits a fundamentally different structure from those in \eqref{eq:chance_lift_permutation}, both in the segment corresponding to variables with positive coefficients and in that corresponding to variables with negative coefficients.
\hfill	$\blacksquare$
\end{example}

%\smallskip
Motivated by Example~\ref{ex:1}, in Section~\ref{sec:chance_bilinear}, we develop a unifying convexification framework that not only subsumes many existing families of valid inequalities for $\mac{F}_c$ from the literature but also generates a rich collection of new families of valid inequalities that could not be captured by previous results, such as the inequality in Example~\ref{ex:1}. These new inequalities can constitute a substantial portion of the convex hull, as demonstrated by our computational experiments in Section~\ref{sec:numerical}.

\section{A Convexification Procedure for Bilinear Sets over a Simplex} \label{sec:BLP}

In this section, we develop a step-by-step aggregation procedure to derive facet-defining inequalities for the convex hull of a bilinear set over a simplex, which contains disjoint bilinear terms in its constraints. In Section~\ref{sec:chance_bilinear}, we then show how $\mac{F}_c$ in \eqref{eq:genprob_chance} can be reformulated as a bilinear set of similar structure, making it amenable to our convexification procedure for generating valid inequalities for its convex hull.

%\smallskip
For $N := \{1,\dotsc,n\}$, $M := \{1,\dotsc,m\}$, $K := \{1,\dotsc,\kappa\}$, and $T := \{1, \dotsc, \tau\}$, we consider
\begin{equation}
\mac{S} = \left\{ (\vc{x};\vc{y}) \in \Xi \times \Delta_m \, \middle|\,
\vc{y}\tr A^k \vc{x} + (\vc{b}^k)\tr \vc{x} + (\vc{c}^k)\tr \vc{y} \geq d_k, \, \, \, \forall k \in K
\right\},
\label{eq:generalset}
\end{equation}
where $\Xi = \left\{ \vc{x} \in \Re^n_+ \, \middle| \, E\vc{x} \geq \vc{f} \right\}$, and $\Delta_m = \left\{ \vc{y} \in \Z_+^{m} \, \middle| \, \vc{1}\tr \vc{y} \leq 1 \right\}$ is an integral simplex. 
In this definition, $A^k \in \Re^{m \times n}$, $\vc{b}^k \in \Re^n$, $\vc{c}^k \in \Re^m$, $d_k \in \Re$, $E \in \Re^{\tau \times n}$, $\vc{f} \in \Re^{\tau}$ are parameters of appropriate dimension, $K$ is the index set of bilinear constraints in $\mac{S}$, and $T$ is the index set of linear constraints besides the non-negativity constraints in $\Xi$.

%\smallskip
Define $\mac{S}(\bar{\vc{y}})$ to be the restriction of $\mac{S}$ at point $\vc{y} = \bar{\vc{y}}$, i.e.,

\begin{equation*}
	\mac{S}(\bar{\vc{y}}) =
	\left\{
(\vc{x};\vc{y}) \in \Re^{n+m} \, \middle|
	\begin{array}{l}
		\vc{y} = \bar{\vc{y}} \\
		\bar{\vc{y}}\tr \! A^k \vc{x} + (\vc{b}^k)\tr \vc{x} \geq d_k -  (\vc{c}^k)\tr \bar{\vc{y}},  \, \, \, \forall k \in K\\
		E\vc{x} \geq \vc{f}\\
		\vc{x} \geq \vc{0}
	\end{array}
	\right\}.
\end{equation*}

Since $\vc{y}$ is integral in $\Delta_m$, we can write $\mac{S} = \bigcup_{j=0}^m \mac{S}(\vc{e}^j)$, where $\vc{e}^j$ for $j \in M$ is the $j$-th unit vector in $\Re^m$, and $\vc{e}^0$ is the origin in $\Re^m$.
We impose the following assumption for the remainder of this section.

\begin{assumption} \label{asm:cone}
	We have that
	\begin{itemize}
		\item[(i)] $\mac{S}(\vc{e}^j) \neq \emptyset$ for all $j \in M \cup \{0\}$,
		\item[(ii)] All sets $\mac{S}(\vc{e}^j)$, for $j \in M \cup \{0\}$, share the same recession cone.
	\end{itemize}
\end{assumption}

Since $\mac{S}$ is represented as a disjunctive union of $\mac{S}(\vc{e}^j)$ for $j \in M \cup \{0\}$, Assumption~\ref{asm:cone} implies that $\conv(\mac{S})$ is a polyhedron.
We refer to the procedure for finding the convex hull of $\mac{S}$ in the space of variables $\vc{x}$ as \textit{bilinear lift-and-project} (BL\&P).
%For any $l \in K$ and $j \in M \cup \{0\}$, we define a \textit{class-$(l,j)$ \BLP inequality} as an inequality obtained through a weighted aggregation of the constraints in the description of $\mac{S}$ through the following procedure.
We aim to obtain valid inequalities for $\proj_{\vc{x}}\conv(\mac{S})$ through a weighted aggregation of the constraints in the description of $\mac{S}$ according to the following procedure.

\medskip
\begin{enumerate}
	\item[\textit{(A1)}] 
	\textit{We select $\hat{k} \in K$ to be the index of a bilinear constraint in $K$ used in the aggregation.
	We also select $\hat{j} \in M \cup \{0\}$ to indicate that the previously selected constraint $\hat{k}$ has a weight of $y_{\hat{j}}$ if $\hat{j} \in M$, or $1 - \vc{1} \tr \vc{y}$ if $\hat{j} = 0$ in the aggregation.
	In the resulting weighted inequality, we replace $y_i^2$ with $y_i$ for each $i \in M$, and we replace $y_i y_l$ with $0$ for each $i, l \in M$ and $i \neq l$.
	We refer to this weighted constraint as the \textit{base} constraint.}
	\item[\textit{(A2)}] 
	\textit{We select $\mac{K}^j$ for $j \in M \cup \{0\}$ to be subsets of $K$. 
	For each $j \in M$ (resp. $j = 0$) and for each $k \in \mac{K}^j$ such that $(k,j) \neq (\hat{k}, \hat{j})$, we multiply the bilinear constraint $\vc{y}\tr A^k \vc{x} + (\vc{b}^k)\tr \vc{x} + (\vc{c}^k)\tr \vc{y} \geq d_k$ with $\alpha^j_ky_j$ where $\alpha^j_k \geq 0$ (resp. with $\alpha^0_k(1-\vc{1} \tr \vc{y})$ where $\alpha^0_k \geq 0$).
	In the resulting weighted inequality, we replace $y_i^2$ with $y_i$ for each $i \in M$, and we replace $y_i y_l$ with $0$ for each $i, l \in M$ and $i \neq l$.}
	\item[\textit{(A3)}] 
	\textit{We select $\mac{T}^j$ for $j \in M \cup \{0\}$ to be subsets of $T$ whose intersection is empty. 
	For each $j \in M$ (resp. $j = 0$) and for each $t \in \mac{T}^j$, we multiply the constraint $E_{t.}\vc{x} \geq f_t$ with $\beta^j_ty_j$ where $\beta^j_t \geq 0$ (resp. with $\beta^0_t(1-\vc{1} \tr \vc{y})$ where $\beta^0_t \geq 0$).}
\end{enumerate}

%\smallskip
The above sets are compactly represented as $\big[\mac{K}^0, \mac{K}^1,\dotsc,\mac{K}^m \big| \mac{T}^0, \mac{T}^1,\dotsc,\mac{T}^m\big]$, which is called a \textit{\BLP assignment}.
Each \BLP assignment is identified by the pair $(\hat{k},\hat{j})$ which represents the base constraint selected in step (A1) of the procedure.
We next aggregate all aforementioned weighted constraints.
%During the aggregation, we require that weights $\vc{\alpha}$ and $\vc{\beta}$ be chosen in such a way that:
%\begin{itemize}
%	\item[\textit{(C1)}] 
%	at least a total of $\sum_{j=0}^m|\mac{K}^j|+|\mac{T}^j|$ bilinear terms together with $y$-variables cancel, i.e., their coefficient becomes zero, and
%	\item[\textit{(C2)}]
%	if $\bigcup_{j =0}^m \mac{K}^j \cup \mac{T}^j \neq \emptyset$, for each constraint used in the aggregation (including the base constraint), at least one bilinear term or $y$-variable among all those created after multiplying that constraint with its corresponding weight is canceled.
%\end{itemize}
In the resulting aggregated inequality, the remaining bilinear terms and $y$-variables are replaced as follows:
\medskip
\begin{itemize}
	\item[\textit{(R1)}] 
	%For each $i \in N$, among all remaining terms $x_i y_j$ for $j \in M$, relax the bilinear term with the most positive coefficient (i.e., the largest coefficient that is also positive) into $x_i$ and relax the rest of these bilinear terms into $0$.
	\textit{For each $i \in N$, let $p_i$ be the most positive coefficient (i.e., the largest coefficient that is also positive) among all remaining terms $x_i y_j$ for $j \in M$, and let $q_i$ be the number of these terms with that coefficient. Set $p_i = q_i = 0$ if such coefficient does not exist. Then, replace the entire remaining terms $x_i y_j$ for $j \in M$ with a single term $p_ix_i$.}
	\item[\textit{(R2)}]
	%Among all remaining variables $y_j$ for $j \in M$, relax the one with the most positive coefficient (i.e., the largest coefficient that is also positive) into $1$ and relax the rest of these variables into $0$.
	\textit{Let $p_0$ be the most positive coefficient (i.e., the largest coefficient that is also positive) among all remaining variables $y_j$ for $j \in M$, and let $q_0$ be the number of these variables with that coefficient. Set $p_0 = q_0 = 0$ if such coefficient does not exist. Then, replace the entire remaining variables $y_j$ for $j \in M$ with the single constant $p_0$.}
	\end{itemize}
    %\smallskip
During this procedure, we require that weights $\vc{\alpha}$ and $\vc{\beta}$ be chosen such that:
\medskip
\begin{itemize}
	\item[\textit{(C1)}] 
	\textit{the coefficients of at least $\sum_{j=0}^m \left(|\mac{K}^j|+|\mac{T}^j|\right) + \sum_{i=0}^n \left(\mathbb{I}(q_i) - q_i \right)$ bilinear terms, together with those of the $y$-variables, become zero during the aggregation---excluding the replacements made in (R1) and (R2).}
\end{itemize}
%\smallskip
The resulting linear inequality after performing the above steps is referred to as a \textit{class-$(\hat{k},\hat{j})$ \BLP inequality}.

%\smallskip
\begin{claim} \label{claim:BLP}
    The \BLP inequalities are valid for $\proj_{\vc{x}} \conv(\mac{S})$, and the collection of all \BLP inequalities is sufficient to describe $\proj_{\vc{x}} \conv(\mac{S})$.
\end{claim}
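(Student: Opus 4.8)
The plan is to prove the two halves of Claim~\ref{claim:BLP} separately: first \emph{validity} of every \BLP inequality, then \emph{sufficiency} of the collection. For validity, I would argue that each \BLP inequality is obtained by a nonnegative combination of valid inequalities of $\mac{S}$, followed by a sequence of relaxation steps that each preserve validity over $\proj_{\vc{x}}\conv(\mac{S})$. Concretely, fix a \BLP assignment with base pair $(\hat k,\hat j)$. On each piece $\mac{S}(\vc{e}^j)$ the multipliers $y_j$ (or $1-\vc1\tr\vc y$) are either $0$ or $1$, so on that piece the aggregation in (A1)--(A3) is literally a nonnegative linear combination of constraints that hold on $\mac{S}(\vc{e}^j)$; the substitutions $y_i^2\mapsto y_i$, $y_iy_l\mapsto 0$ ($i\ne l$) are exact identities on $\Delta_m$. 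Hence the aggregated inequality (before (R1)--(R2)) is valid on every $\mac{S}(\vc{e}^j)$, therefore on $\mac{S}=\bigcup_j \mac{S}(\vc{e}^j)$, and therefore on $\conv(\mac{S})$. The replacements (R1)--(R2) only weaken the inequality: replacing the remaining terms $x_iy_j$ ($j\in M$) by $p_ix_i$ with $p_i$ the most positive coefficient uses $x_i\ge0$ and $\sum_{j}y_j\le 1$, $y_j\ge0$ to bound $\sum_j (\text{coeff})\, x_iy_j \le p_i x_i$ on the $\ge$ side; similarly replacing the leftover $y_j$'s by the constant $p_0$ uses $\sum_j y_j\le 1$. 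Since these steps eliminate all $\vc y$-dependence, the resulting linear inequality in $\vc x$ alone is valid on $\proj_{\vc{x}}\conv(\mac{S})$. Condition (C1) plays no role in validity; it is a \emph{nondegeneracy/dimension} bookkeeping device that will matter only for the facet count in the sufficiency direction, so I would state it explicitly here and defer its use.

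For sufficiency, the natural route is disjunctive programming. By Assumption~\ref{asm:cone}, $\conv(\mac{S})=\conv\!\big(\bigcup_{j=0}^m\mac{S}(\vc{e}^j)\big)$ is a polyhedron, and by Balas's theorem it has an extended formulation: $(\vc x;\vc y)\in\conv(\mac{S})$ iff there exist $\lambda_j\ge0$ with $\sum_j\lambda_j=1$ and $(\vc x^j;\vc y^j)\in\mac{S}(\vc{e}^j)$ (using the common recession cone for the $\lambda_j=0$ pieces) such that $(\vc x;\vc y)=\sum_j(\vc x^j;\vc y^j)$. Projecting onto $\vc x$ and using the finitely-generated-cone/Farkas description of this projection, every facet-defining inequality of $\proj_{\vc x}\conv(\mac{S})$ arises from a vector of multipliers: one multiplier on each constraint of each piece $\mac{S}(\vc{e}^j)$, subject to a consistency condition across pieces coming from the shared variable $\vc x$. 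I would show that such a multiplier vector, read back in the original (un-lifted) space, is exactly a choice of $(\hat k,\hat j)$, of the sets $\mac{K}^j,\mac{T}^j$ (the support of the multipliers), and of the weights $\vc\alpha,\vc\beta$; and that the projected inequality it produces, after collecting the bilinear terms and applying the tightest possible $\vc y$-elimination, is precisely a \BLP inequality with those parameters. The bilinear-term collection is where the $y_j^2\mapsto y_j$, $y_iy_l\mapsto0$ identities and the ``most positive coefficient'' rules in (R1)--(R2) come from: on piece $j$ only the $y_j$-weighted copies survive with weight $1$, which forces the across-piece-consistent coefficient of $x_i$ to be the maximum over $j$ of what piece $j$ contributes, i.e.\ $p_i$, and the number of pieces achieving it is $q_i$. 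Condition (C1) then encodes exactly the requirement that the multiplier vector sit on a face of the projection cone of the right dimension to give a facet rather than a lower-dimensional face — the count $\sum_j(|\mac{K}^j|+|\mac{T}^j|)+\sum_i(\mathbb{I}(q_i)-q_i)$ is the number of independent active constraints needed.

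I would organize the sufficiency proof as: (1) write the Balas extended formulation of $\conv(\mac{S})$ and its projection cone onto $\vc x$; (2) take an arbitrary facet-defining inequality $\vc a\tr\vc x\ge a_0$ of $\proj_{\vc x}\conv(\mac{S})$ and extract from Farkas/projection duality a family of nonnegative multipliers on the piecewise constraints; (3) translate these multipliers back to the pre-lift description of $\mac{S}$, identifying the base pair and the sets $\mac{K}^j,\mac{T}^j$ as their supports and showing the across-piece coupling forces exactly the (R1)--(R2) coefficient rules; (4) verify that facetness of $\vc a\tr\vc x\ge a_0$ translates into (C1) being met, so the recovered inequality is a bona fide class-$(\hat k,\hat j)$ \BLP inequality; and conclude. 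Steps (1)--(2) are standard. The main obstacle is step (3): making the bookkeeping of bilinear monomials precise — in particular showing that the ``most positive coefficient / count $q_i$'' reduction in (R1) is not an arbitrary strengthening heuristic but is \emph{forced} by the structure of the projection (i.e.\ any consistent multiplier system yields a coefficient of $x_i$ that is dominated by $p_i x_i$, and to stay on a facet one must take equality, which is only possible by collapsing the $q_i$ tied pieces). Correctly matching the combinatorial count in (C1) to the codimension-one condition on the projection-cone face is the delicate part and is where I expect most of the work to lie.
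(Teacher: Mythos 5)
Your proposal is correct and, for the substantive (sufficiency) half, follows the same route as the paper: Balas's disjunctive extended formulation of $\conv(\mac{S})$ (Proposition~\ref{prop:disjunctive}), projection onto the $\vc{x}$-space via the projection cone (Proposition~\ref{prop:projection}), normalization $\alpha^{\hat j}_{\hat k}=1$ to pass from extreme rays to extreme points of $\mac{C}_{(\hat k,\hat j)}$ (Propositions~\ref{prop:vertical} and \ref{prop:class}), and a translation of the structural properties of those extreme points into the choices of $(\hat k,\hat j)$, $\mac{K}^j$, $\mac{T}^j$ and the rules (R1)--(R2), with condition (C1) emerging exactly as the rank count on the basis matrix (Proposition~\ref{prop:ext_point_1}, part (xii)). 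Where you genuinely differ is the validity half: you prove it directly by observing that the aggregation is a nonnegative combination of constraints on each disjunct $\mac{S}(\vc{e}^j)$ and that (R1)--(R2) only relax the left-hand side of a $\ge$-inequality, whereas the paper obtains validity from the converse statements of Propositions~\ref{prop:class} and \ref{prop:ext_point_1} (any parameter choice produced by the procedure yields a feasible point of $\mac{C}_{(\hat k,\hat j)}$, and feasible points of the projection cone yield valid inequalities). Your direct argument is more elementary and self-contained; the paper's version buys uniformity, since the same cone machinery serves both directions and cleanly isolates where (C1) is and is not needed. One small repair: your intermediate assertion that the still-bilinear aggregated inequality is valid ``on $\conv(\mac{S})$'' is neither needed nor literally true for a nonlinear inequality --- pointwise validity on $\mac{S}$ suffices, and convexity should be invoked only after (R1)--(R2) have produced a linear inequality in $\vc{x}$ alone.
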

Our goal in this section is to prove this claim, which is provided in Theorem~\ref{thm:BLP}.
We begin with finding the convex hull description of $\mac{S}$ using disjunctive programming \cite{balas:1979}.

\begin{proposition} \label{prop:disjunctive}
	Define $\mac{Q} = \left\{ (\vc{x}, \vc{y}, \vc{u}^0, \vc{u}^1, \dotsc, \vc{u}^m) \middle| \eqref{eq:disjunctive_1} - \eqref{eq:disjunctive_8} \right\}$, where
	\begin{subequations}
		\begin{align}
			& (\vc{b}^k)\tr \left(\vc{x} - \sum_{j=1}^m \vc{u}^j\right) \geq d_k(1 - \vc{1}\tr \vc{y})	&\forall k \in K \label{eq:disjunctive_1}\\
			&\left(A^k_{j.} + (\vc{b}^k)\tr \right) \vc{u}^j \geq \left(d_k -  \vc{c}^k_j\right)y_j 	&\forall k \in K, \, j \in  M \label{eq:disjunctive_2}\\			
			&E\left(\vc{x} - \sum_{j=1}^m \vc{u}^j\right) \geq \vc{f}(1 - \vc{1}\tr \vc{y}) &	 \label{eq:disjunctive_3}\\
			&E\vc{u}^j \geq \vc{f}y_j		&\forall j \in M \label{eq:disjunctive_4}\\
			&\vc{x} - \sum_{j=1}^m \vc{u}^j  \geq \vc{0}	& \label{eq:disjunctive_5}\\			
			&\vc{u}^j \geq \vc{0}	&\forall j \in M \label{eq:disjunctive_6}\\
			&1 - \vc{1}\tr \vc{y} \geq 0 & \label{eq:disjunctive_7}\\
			&\vc{y} \geq \vc{0} & \label{eq:disjunctive_8}
		\end{align}
	\end{subequations}
	Then, $\proj_{\vc{x}} \conv(\mac{S}) = \proj_{\vc{x}}(\mac{Q})$.
\end{proposition}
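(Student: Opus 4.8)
The goal is to prove that $\proj_{\vc{x}}\conv(\mac{S}) = \proj_{\vc{x}}(\mac{Q})$, where $\mac{Q}$ is the disjunctive-programming lifting of the union $\mac{S} = \bigcup_{j=0}^m \mac{S}(\vc{e}^j)$. The plan is to apply Balas's disjunctive characterization of the convex hull of a union of polyhedra. Since $\Delta_m$ is an integral simplex, every feasible $\vc{y}$ is one of the unit vectors $\vc{e}^j$, $j \in M$, or the origin $\vc{e}^0$, so $\mac{S}$ is exactly the union of the $m+1$ polyhedra $\mac{S}(\vc{e}^j)$. By Assumption~\ref{asm:cone}, each $\mac{S}(\vc{e}^j)$ is nonempty and they all share a common recession cone, which is precisely the hypothesis under which Balas's theorem gives $\conv(\mac{S})$ (rather than its closure) as the projection of an explicit higher-dimensional polyhedron obtained by introducing a copy $\vc{u}^j$ of the $\vc{x}$-variables and a multiplier $y_j \geq 0$ for each disjunct, with $\sum_{j=0}^m y_j = 1$.

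First I would write out the standard Balas lifting. For each disjunct $j \in M$, the polyhedron $\mac{S}(\vc{e}^j)$ in $\vc{x}$-space is described (after substituting $\vc{y} = \vc{e}^j$) by the inequalities $(A^k_{j.} + (\vc{b}^k)\tr)\vc{x} \geq d_k - \vc{c}^k_j$ for $k \in K$, together with $E\vc{x} \geq \vc{f}$ and $\vc{x} \geq \vc{0}$; for $j = 0$, it is described by $(\vc{b}^k)\tr \vc{x} \geq d_k$ for $k \in K$, together with $E\vc{x} \geq \vc{f}$ and $\vc{x} \geq \vc{0}$. Balas's homogenization introduces variables $\vc{u}^j$ (the "slices" of $\vc{x}$) and scalars $y_j \geq 0$ with $\sum_{j=0}^m y_j = 1$ such that $\vc{x} = \sum_{j=0}^m \vc{u}^j$ and each $\vc{u}^j$ satisfies the homogenized constraints of disjunct $j$: $(A^k_{j.} + (\vc{b}^k)\tr)\vc{u}^j \geq (d_k - \vc{c}^k_j) y_j$, $E\vc{u}^j \geq \vc{f} y_j$, $\vc{u}^j \geq \vc{0}$ for $j \in M$; and $(\vc{b}^k)\tr \vc{u}^0 \geq d_k y_0$, $E\vc{u}^0 \geq \vc{f} y_0$, $\vc{u}^0 \geq \vc{0}$ for $j = 0$. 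The only step that remains is to eliminate $\vc{u}^0$ and $y_0$ by the substitutions $\vc{u}^0 = \vc{x} - \sum_{j=1}^m \vc{u}^j$ and $y_0 = 1 - \vc{1}\tr\vc{y}$; carrying these out term by term turns the $j=0$ block of constraints into exactly \eqref{eq:disjunctive_1}, \eqref{eq:disjunctive_3}, \eqref{eq:disjunctive_5}, \eqref{eq:disjunctive_7}, while the $j \in M$ blocks become \eqref{eq:disjunctive_2}, \eqref{eq:disjunctive_4}, \eqref{eq:disjunctive_6}, and the nonnegativity $\vc{y} \geq \vc{0}$ is \eqref{eq:disjunctive_8}. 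This shows $\mac{Q}$ is literally Balas's extended formulation after elimination of the redundant copy, hence $\proj_{(\vc{x},\vc{y})}\conv(\mac{S}) = \proj_{(\vc{x},\vc{y})}(\mac{Q})$, and projecting further onto $\vc{x}$ gives the claim.

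The main obstacle — really the only point requiring care — is justifying that Balas's theorem yields the exact convex hull and not merely its topological closure. This is where Assumption~\ref{asm:cone} does the work: the classical result (Balas, and see Balas's "disjunctive programming" monograph) states that if every disjunct is nonempty and all disjuncts have the same recession cone, then the projection of the homogenized polyhedron is closed and equals $\conv$ of the union exactly. I would cite \cite{balas:1979} for this and verify the two hypotheses explicitly: part (i) of Assumption~\ref{asm:cone} gives nonemptiness of each $\mac{S}(\vc{e}^j)$, and part (ii) gives the common recession cone. A secondary, purely mechanical point is to double-check the bookkeeping of the substitution $\vc{u}^0 = \vc{x} - \sum_{j=1}^m \vc{u}^j$: one must confirm that no constraint is lost or double-counted and that the resulting system is exactly \eqref{eq:disjunctive_1}--\eqref{eq:disjunctive_8}. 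I would present this as a short verification rather than a computation, since each line matches one block of the homogenized disjunction.
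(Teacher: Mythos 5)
Your proposal is correct and follows essentially the same route as the paper: apply Balas's disjunctive formulation to $\mac{S} = \bigcup_{j=0}^m \mac{S}(\vc{e}^j)$, invoke Assumption~\ref{asm:cone} to get the exact convex hull, and eliminate the $j=0$ copy via $\vc{u}^0 = \vc{x} - \sum_{j=1}^m \vc{u}^j$ and $y_0 = 1 - \vc{1}\tr\vc{y}$. The only detail you gloss over is why the convex multipliers may be identified with the components of $\vc{y}$ in the first place; the paper derives this from the $\vc{y}$-replicas satisfying $\vc{v}^j = \vc{e}^j\xi_j$ and $\vc{y}=\sum_j \vc{v}^j$, which forces $\xi_j = y_j$.
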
	

\begin{proof}
The disjunctive programming formulation of $\mac{S} = \bigcup_{j=0}^m \mac{S}(\vc{e}^j)$ using $\vc{u}^j \in \Re^n$ and $\vc{v}^j \in \Re^m$ as replicas of $\vc{x}$ and $\vc{y}$ in disjunct $j \in M \cup \{0\}$ with convex multiplier $\xi_j$ is as follows:
	\begin{subequations}
		\begin{align}
		&\vc{v}^j = \vc{e}^j \xi_j	&\forall j \in M \cup \{0\} \label{eq:disjunct_1}\\
		&(\vc{e}^j)\tr \! A^k \vc{u}^j + (\vc{b}^k)\tr \vc{u}^j \geq \left(d_k -  (\vc{c}^k)\tr \vc{e}^j\right)\xi_j 	&\forall k \in K, \, j \in  M \cup \{0\} \label{eq:disjunct_2}\\
		&E\vc{u}^j \geq \vc{f}\xi_j		&\forall j \in M \cup \{0\} \label{eq:disjunct_3}\\
		&\vc{u}^j \geq \vc{0}	&\forall j \in M \cup \{0\} \label{eq:disjunct_4}\\
		&\vc{x} = \sum_{j=0}^m \vc{u}^j & \label{eq:disjunct_5}\\
		&\vc{y} = \sum_{j=0}^m \vc{v}^j & \label{eq:disjunct_6}\\
		&\sum_{j=0}^m \xi_j = 1 & \label{eq:disjunct_7}\\
		&\vc{\xi} \geq \vc{0}.  \label{eq:disjunct_8}
	\end{align}
	\end{subequations}
	Let $\mac{P}$ be the feasible region described by the above constraints.
	Under Assumption~\ref{asm:cone}, we have $\conv(\mac{S}) = \proj_{(\vc{x}, \vc{y})} (\mac{P})$; see \cite{conforti:co:za:2014}.
	Using equations \eqref{eq:disjunct_1}, \eqref{eq:disjunct_5} -- \eqref{eq:disjunct_7}, we may substitute $\xi_j$ with $y_j$ for $j \in M$, substitute $\xi_0$ with $1 - \vc{1}\tr \vc{y}$, and replace $\vc{u}^0$ with $\vc{x} - \sum_{j=1}^m \vc{u}^j$ in the rest of the equations to obtain formulation \eqref{eq:disjunctive_1} -- \eqref{eq:disjunctive_8}.	
\end{proof}

  %%\smallskip
Following the interpretation of the new variables added in the disjunctive programming formulation, we may view variables $u^j_i$ in \eqref{eq:disjunctive_1}--\eqref{eq:disjunctive_8} as $x_i y_j$ for each $i \in N$ and $j \in M$. 
To obtain the projection of \eqref{eq:disjunctive_1}--\eqref{eq:disjunctive_8} in the space of $\vc{x}$ variables, we define the vector of dual multipliers $\vc{\pi} = (\vc{\alpha}^0, \dotsc, \vc{\alpha}^m; \vc{\beta}^0, \dotsc, \vc{\beta}^m; \vc{\gamma}^0, \dotsc, \vc{\gamma}^m; \theta_0, \dotsc, \theta_m)$, where $\vc{\alpha}^0 \in \Re_+^{\kappa}$ (resp. $\vc{\alpha}^j \in \Re_+^{\kappa}$) is the vector of dual multipliers associated with constraints \eqref{eq:disjunctive_1} (resp. \eqref{eq:disjunctive_2}), $\vc{\beta}^0 \in \Re_+^{\tau}$ (resp. $\vc{\beta}^j \in \Re_+^{\tau}$) includes the dual multipliers associated with \eqref{eq:disjunctive_3} (resp. \eqref{eq:disjunctive_4}), $\vc{\gamma}^0 \in \Re_+^{n}$ (resp. $\vc{\gamma}^j \in \Re_+^{n}$) contains the dual multipliers corresponding to \eqref{eq:disjunctive_5} (resp. \eqref{eq:disjunctive_6}), and $\theta_0 \in \Re_+$ (resp. $\theta_j \in \Re_+$) is the dual multiplier for \eqref{eq:disjunctive_7} (resp. \eqref{eq:disjunctive_8}). 

\begin{proposition} \label{prop:projection}
	Define $\mac{C} = \left\{ \vc{\pi} \in \Re^{(m+1)(\kappa + \tau + n + 1)}_+ \, \middle| \eqref{eq:cone_1}, \eqref{eq:cone_2} \right\}$, where
	\begin{subequations}
		\begin{align}
			&\sum_{k=1}^{\kappa}  \left((A^k_{j,i} + b^k_i) \alpha^j_k - b^k_i \alpha^0_k \right) + \sum_{t=1}^{\tau} \left( E_{t, i} \beta^j_t - E_{t, i}\beta^0_t \right)  + \gamma^j_i - \gamma^0_i = 0  	& \forall i \in N, \, j \in M  \label{eq:cone_1}\\
			&\sum_{k=1}^{\kappa} \left((c^j_k - d_k) \alpha^j_k + d_k \alpha^0_k \right) + \sum_{t=1}^{\tau} \left( - f_t \beta^j_t + f_t \beta^0_t  \right) + \theta_j - \theta_0 = 0.  & \forall j \in M	\label{eq:cone_2}
		\end{align}
	\end{subequations}
	For any point $\bar{\vc{\pi}} \in \mac{C}$, the inequality
	\begin{equation}
		\left(\sum_{k=1}^{\kappa} \bar{\alpha}^0_k (\vc{b}^k) + E \tr \bar{\vc{\beta}}^0 + \bar{\vc{\gamma}}^0 \right)\tr \vc{x} \geq \sum_{k=1}^{\kappa} \bar{\alpha}^0_k d_k  + (\bar{\vc{\beta}}^0)\tr \vc{f} -\bar{\theta}_0  \label{eq:cone_3}
	\end{equation}
is valid for $\proj_{\vc{x}} \conv(\mac{S})$. Further, \eqref{eq:cone_3} is facet-defining for $\proj_{\vc{x}} \conv(\mac{S})$ only if $\bar{\vc{\pi}}$ is an extreme ray of $\mac{C}$.
\end{proposition}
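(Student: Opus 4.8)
The plan is to derive inequality~\eqref{eq:cone_3} as the $\vc{x}$-space projection of the disjunctive formulation in Proposition~\ref{prop:disjunctive} by Fourier–Motzkin elimination carried out via linear programming duality, and then use the standard correspondence between facets of a projected polyhedron and extreme rays of the associated projection cone. First I would recall, from Proposition~\ref{prop:disjunctive}, that $\proj_{\vc{x}}\conv(\mac{S}) = \proj_{\vc{x}}(\mac{Q})$, where $\mac{Q}$ lives in the space of $(\vc{x}, \vc{y}, \vc{u}^1, \dotsc, \vc{u}^m)$ and is described by \eqref{eq:disjunctive_1}--\eqref{eq:disjunctive_8}. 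To project out the variables $\vc{y}$ and $\vc{u}^1, \dotsc, \vc{u}^m$, I would take an arbitrary nonnegative combination of the inequalities of $\mac{Q}$ with multipliers $\vc{\pi} = (\vc{\alpha}^0, \dotsc, \vc{\alpha}^m; \vc{\beta}^0, \dotsc, \vc{\beta}^m; \vc{\gamma}^0, \dotsc, \vc{\gamma}^m; \theta_0, \dotsc, \theta_m) \ge \vc{0}$ as set up just before the proposition, and impose that the coefficients of every variable to be eliminated vanish in the aggregated inequality.

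The second step is the bookkeeping: collect, in the aggregated inequality, the coefficient of each $u^j_i$ (for $i \in N$, $j \in M$) and of each $y_j$ (for $j \in M$), and set each to zero. The $u^j_i$-coefficient comes from \eqref{eq:disjunctive_1} (weight $-b^k_i$ via $\alpha^0_k$, because $\vc{u}^j$ appears inside $\vc{x}-\sum_j\vc{u}^j$), from \eqref{eq:disjunctive_2} (weight $A^k_{j,i}+b^k_i$ via $\alpha^j_k$), from \eqref{eq:disjunctive_3} and \eqref{eq:disjunctive_4} (weights $-E_{t,i}$ via $\beta^0_t$ and $E_{t,i}$ via $\beta^j_t$), and from \eqref{eq:disjunctive_5} and \eqref{eq:disjunctive_6} (weights $-1$ via $\gamma^0_i$ and $+1$ via $\gamma^j_i$); setting this to zero gives exactly~\eqref{eq:cone_1}. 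Similarly, the $y_j$-coefficient collects the terms $d_k$ (from the $(1-\vc{1}\tr\vc{y})$ right-hand sides of \eqref{eq:disjunctive_1}, via $\alpha^0_k$), $-(d_k - c^k_j)$ (from \eqref{eq:disjunctive_2}, via $\alpha^j_k$), $f_t$ and $-f_t$ (from the right-hand sides of \eqref{eq:disjunctive_3} and \eqref{eq:disjunctive_4}), and $\theta_0 - \theta_j$ (from \eqref{eq:disjunctive_7} and \eqref{eq:disjunctive_8}); setting this to zero yields~\eqref{eq:cone_2}. Hence for $\bar{\vc{\pi}} \in \mac{C}$ the surviving variables are exactly the $x_i$, whose coefficients are $\sum_k \bar\alpha^0_k b^k_i + \sum_t E_{t,i}\bar\beta^0_t + \bar\gamma^0_i$ (the $x_i$-terms come only from \eqref{eq:disjunctive_1}, \eqref{eq:disjunctive_3}, \eqref{eq:disjunctive_5}), and the constant on the right-hand side aggregates the $d_k(1-\vc{1}\tr\vc{y})$, $f_t(1-\vc{1}\tr\vc{y})$, and $1\ge\vc{1}\tr\vc{y}$ terms to give $\sum_k \bar\alpha^0_k d_k + (\bar{\vc{\beta}}^0)\tr\vc{f} - \bar\theta_0$ once the $\vc{y}$-terms cancel; this is precisely~\eqref{eq:cone_3}. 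Validity for $\proj_{\vc{x}}\conv(\mac{S})$ is then immediate: \eqref{eq:cone_3} is a nonnegative combination of valid inequalities for $\mac{Q}$ in which all eliminated variables have zero net coefficient, so it holds for every $\vc{x}$ that extends to a point of $\mac{Q}$, and by Proposition~\ref{prop:disjunctive} these are exactly the points of $\proj_{\vc{x}}\conv(\mac{S})$.

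For the facet-defining claim, I would invoke the standard projection theorem (e.g.\ \citet{conforti:co:za:2014}): the nontrivial facet-defining inequalities of $\proj_{\vc{x}}(\mac{Q})$ are (up to positive scaling) exactly those obtained from extreme rays of the projection cone $\mac{C}$, i.e.\ the cone of multiplier vectors $\vc{\pi}\ge\vc{0}$ that annihilate the coefficients of all projected-out variables. Since $\mac{C}$ as defined is precisely this cone, any facet-defining inequality of the form~\eqref{eq:cone_3} must arise from an extreme ray of $\mac{C}$; two distinct rays can map to the same inequality in $\vc{x}$-space (the "only if" in the statement, not "if and only if", already accommodates this). The main obstacle, and the place requiring care rather than routine computation, is the sign/orientation bookkeeping in Step~2 — in particular tracking that $\vc{u}^0$ has been eliminated as $\vc{x}-\sum_{j=1}^m\vc{u}^j$, so that multipliers on \eqref{eq:disjunctive_1}, \eqref{eq:disjunctive_3}, \eqref{eq:disjunctive_5} contribute simultaneously to the $x_i$-coefficients and (with opposite sign) to the $u^j_i$-coefficients, and confirming that the resulting system is exactly \eqref{eq:cone_1}--\eqref{eq:cone_2} with no stray terms; I would also note explicitly that Assumption~\ref{asm:cone} is what guarantees $\mac{Q}$ (equivalently $\conv(\mac{S})$) is a polyhedron so that the extreme-ray characterization of its facets applies.
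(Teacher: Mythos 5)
Your proof is correct and follows essentially the same route as the paper's: project $\mac{Q}$ from Proposition~\ref{prop:disjunctive} via nonnegative multipliers, require the coefficients of every $u^j_i$ and $y_j$ to vanish (which gives exactly \eqref{eq:cone_1}--\eqref{eq:cone_2}, so the multiplier cone is $\mac{C}$), and invoke the standard extreme-ray characterization of facet-defining inequalities of a projection. The only quibble is a harmless sign slip in your description of the $y_j$-coefficient from \eqref{eq:disjunctive_7}--\eqref{eq:disjunctive_8} (the net contribution is $\theta_j - \theta_0$, not $\theta_0 - \theta_j$), which does not affect the conclusion.
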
	
	
\begin{proof}
	It follows from Proposition~\ref{prop:disjunctive} that $\proj_{\vc{x}} \conv(\mac{S}) = \proj_{\vc{x}}(\mac{Q})$, where $\mac{Q}$ is the set defined in that proposition.
	Therefore, we use polyhedral projection to project out variables $\vc{u}$ and $\vc{y}$ in the description of $\mac{Q}$.
	Considering the non-negative dual weights $\vc{\pi}$ as defined previously, we obtain the projection constraint \eqref{eq:cone_1} for each variable $u^j_i$ for $i \in N$ and $j \in M$.
	Similarly, for each variable $y_j$ for $j \in M$, the projection constraint is \eqref{eq:cone_2}.
	These constraints form the projection cone $\mac{C}$ defined in the proposition statement.
	As a result, the weighted combination of the constraints in $\mac{Q}$ with dual weights in $\mac{C}$ yields the valid inequality \eqref{eq:cone_3} for $\proj_{\vc{x}} \conv(\mac{S})$. 
	It is well-known that a facet-defining inequality of the form \eqref{eq:cone_3} corresponds to an extreme ray of the projection cone $\mac{C}$; see \cite{conforti:co:za:2014}.	   
\end{proof}

  %%\smallskip
We refer to valid inequalities of $\proj_{\vc{x}} \conv(\mac{S})$ that are implied by the constraints describing $\Xi$ as \textit{vertical} inequalities.
As a result, we are interested in obtaining non-vertical valid inequalities for $\proj_{\vc{x}} \conv(\mac{S})$.

\begin{proposition} \label{prop:vertical}
	Let $\bar{\vc{\pi}} \in \mac{C}$ be the dual weights that yield a non-vertical valid inequality of the form \eqref{eq:cone_3} for $\proj_{\vc{x}} \conv(\mac{S})$. 
	Then, $\bar{\alpha}^j_k > 0$ for some $j \in M \cup \{0\}$ and $k \in K$.
\end{proposition}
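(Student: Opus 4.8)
The plan is to argue by contradiction: suppose $\bar{\alpha}^j_k = 0$ for all $j \in M \cup \{0\}$ and all $k \in K$, and show that the resulting inequality~\eqref{eq:cone_3} must be vertical, i.e., implied by the constraints $E\vc{x} \geq \vc{f}$ and $\vc{x} \geq \vc{0}$ describing $\Xi$. First I would substitute $\bar{\vc{\alpha}}^j = \vc{0}$ for all $j$ into the cone equations~\eqref{eq:cone_1} and~\eqref{eq:cone_2}. Equation~\eqref{eq:cone_1} then reduces to $\sum_{t=1}^{\tau}(E_{t,i}\beta^j_t - E_{t,i}\beta^0_t) + \gamma^j_i - \gamma^0_i = 0$ for all $i \in N$, $j \in M$, and~\eqref{eq:cone_2} reduces to $\sum_{t=1}^{\tau}(-f_t\beta^j_t + f_t\beta^0_t) + \theta_j - \theta_0 = 0$ for all $j \in M$. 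Meanwhile the candidate inequality~\eqref{eq:cone_3} becomes $\big(E\tr\bar{\vc{\beta}}^0 + \bar{\vc{\gamma}}^0\big)\tr\vc{x} \geq (\bar{\vc{\beta}}^0)\tr\vc{f} - \bar{\theta}_0$, whose left-hand side is a nonnegative combination of the rows $E_{t,\cdot}$ (weights $\bar{\beta}^0_t \geq 0$) plus a nonnegative combination of the unit vectors (weights $\bar{\gamma}^0_i \geq 0$).

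The key step is to show this inequality is dominated by $E\vc{x}\geq\vc{f}$, $\vc{x}\geq\vc{0}$. The left-hand side is clearly $\geq (\bar{\vc{\beta}}^0)\tr(E\vc{x}) \geq (\bar{\vc{\beta}}^0)\tr\vc{f}$ for any $\vc{x}\in\Xi$, using $E\vc{x}\geq\vc{f}$ and $\bar{\vc{\beta}}^0\geq\vc{0}$, and adding $\bar{\vc{\gamma}}^0{}\tr\vc{x}\geq 0$. So it suffices to verify $(\bar{\vc{\beta}}^0)\tr\vc{f} \geq (\bar{\vc{\beta}}^0)\tr\vc{f} - \bar{\theta}_0$, which holds since $\bar{\theta}_0 \geq 0$. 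Hence~\eqref{eq:cone_3} is satisfied by every $\vc{x}\in\Xi$ and, being a nonnegative combination of the defining inequalities of $\Xi$ with a possibly weakened right-hand side, is vertical — contradicting the hypothesis. (The cone equations~\eqref{eq:cone_1}--\eqref{eq:cone_2} are not even needed for this direction; they only constrain which $\bar{\vc{\beta}}^j,\bar{\vc{\gamma}}^j,\bar{\theta}_j$ are feasible, and any such choice still yields a vertical inequality.)

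The main obstacle is being precise about the definition of ``vertical.'' The excerpt defines vertical inequalities as those ``implied by the constraints describing $\Xi$,'' so I need to confirm that an inequality of the form $(E\tr\bar{\vc{\beta}}^0 + \bar{\vc{\gamma}}^0)\tr\vc{x} \geq (\bar{\vc{\beta}}^0)\tr\vc{f} - \bar{\theta}_0$ indeed qualifies — which it does, since it is a nonnegative aggregation of $E\vc{x}\geq\vc{f}$ and $\vc{x}\geq\vc{0}$ followed by relaxing the constant by $\bar\theta_0\geq 0$, and relaxation preserves the ``implied by'' relation. A secondary point worth a sentence is the degenerate case $\bar{\vc{\beta}}^0 = \vc{0}$ and $\bar{\vc{\gamma}}^0 = \vc{0}$: then~\eqref{eq:cone_3} is $\vc{0}\tr\vc{x} \geq -\bar{\theta}_0$, i.e., $0 \geq -\bar\theta_0$, which is trivially true and hence vacuously vertical. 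In all cases the contrapositive gives the claim: a non-vertical valid inequality forces $\bar{\alpha}^j_k > 0$ for some $j\in M\cup\{0\}$, $k\in K$.
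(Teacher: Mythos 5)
Your proof is correct and follows essentially the same contradiction argument as the paper: setting all $\bar{\alpha}^j_k = 0$ reduces \eqref{eq:cone_3} to a nonnegative aggregation of the constraints defining $\Xi$, hence a vertical inequality. Your extra care with the $-\bar{\theta}_0$ term (which only weakens the right-hand side and so preserves verticality) is a minor refinement the paper glosses over, but the substance is identical.
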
	

\begin{proof}
	Assume by contradiction that there exists a non-vertical valid inequality of the form \eqref{eq:cone_3} for $\proj_{\vc{x}} \conv(\mac{S})$ obtained from the dual weights $\bar{\vc{\pi}} \in \mac{C}$ such that $\bar{\alpha}^j_k = 0$ for all $j \in M \cup \{0\}$ and $k \in K$.
	Then, the resulting inequality reduces to $\left(E \tr \bar{\vc{\beta}}^0 + \bar{\vc{\gamma}}^0 \right)\tr \vc{x} \geq  (\bar{\vc{\beta}}^0)\tr \vc{f} $.
	This inequality can be obtained from combining inequalities $E \vc{x} \geq \vc{f}$ with weights $\bar{\vc{\beta}}^0$ and inequalities $\vc{x} \geq \vc{0}$ with weights $\bar{\vc{\gamma}}^0$.
	Since all these inequalities are in the description of $\Xi$, we conclude that $\left(E \tr \bar{\vc{\beta}}^0 + \bar{\vc{\gamma}}^0 \right)\tr \vc{x} \geq  (\bar{\vc{\beta}}^0)\tr \vc{f}$ is implied by the constraints in $\Xi$, making it a vertical inequality by definition.
	This is a contradiction to the initial assumption imposed above. 
\end{proof}

  %%\smallskip
It follows from Propositions~\eqref{prop:projection} and \ref{prop:vertical} that each non-vertical facet-defining inequality in \linebreak $\proj_{\vc{x}} \conv(\mac{S})$ is of the form \eqref{eq:cone_3} for some extreme ray $\bar{\vc{\pi}} \in \mac{C}$, where $\bar{\alpha}^{\hat{j}}_{\hat{k}} > 0$ for some $\hat{j} \in M \cup \{0\}$ and $\hat{k} \in K$.
Since $\bar{\vc{\pi}}$ is a ray, we may scale it by fixing $\bar{\alpha}^{\hat{j}}_{\hat{k}} = 1$.
As a result, we can view $\bar{\vc{\pi}}$ as an extreme point of the set $\mac{C}_{(\hat{k}, \hat{j})} = \left\{ \vc{\pi} \in \Re^{(m+1)(\kappa + \tau + n + 1)}_+ \, \middle| \, \alpha^{\hat{j}}_{\hat{k}} = 1, \, \eqref{eq:cone_1}, \eqref{eq:cone_2} \right\}$.

Using the results of Propositions~\ref{prop:projection} and \ref{prop:vertical} together with the definition of $\mac{C}_{(\hat{k}, \hat{j})}$, we directly obtain the following result.

\begin{proposition} \label{prop:class}
	The collection of inequalities \eqref{eq:cone_3} obtained from the dual weights $\bar{\vc{\pi}}$ corresponding to the extreme points of $\mac{C}_{(\hat{k}, \hat{j})}$ for all $\hat{k} \in K$ and $\hat{j} \in M \cup \{0\}$ includes all non-vertical facet-defining inequalities in the description of $\proj_{\vc{x}} \conv(\mac{S})$.

    Conversely, any inequality \eqref{eq:cone_3} obtained from the dual weight $\bar{\vc{\pi}}$ corresponding to a feasible solution of $\mac{C}_{(\hat{k}, \hat{j})}$ for some $\hat{k} \in K$ and $\hat{j} \in M \cup \{0\}$ is valid for $\proj_{\vc{x}} \conv(\mac{S})$.
\end{proposition}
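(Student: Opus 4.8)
The plan is to assemble the proposition directly from the machinery already in place, treating it essentially as a bookkeeping corollary. First I would recall the two ingredients: Proposition~\ref{prop:projection} tells us that every valid inequality of $\proj_{\vc{x}} \conv(\mac{S})$ obtained by a nonnegative combination of the constraints of $\mac{Q}$ is of the form \eqref{eq:cone_3} for some $\bar{\vc{\pi}} \in \mac{C}$, and that facet-defining ones correspond to extreme rays of $\mac{C}$; Proposition~\ref{prop:vertical} tells us that a non-vertical inequality forces $\bar{\alpha}^j_k > 0$ for some $j \in M \cup \{0\}$, $k \in K$. Combining these, any non-vertical facet-defining inequality comes from an extreme ray $\bar{\vc{\pi}}$ of $\mac{C}$ with $\bar{\alpha}^{\hat{j}}_{\hat{k}} > 0$ for some index pair $(\hat{k},\hat{j})$.

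The key step is the normalization: since $\mac{C}$ is a cone and $\bar{\vc{\pi}}$ is an extreme ray with $\bar{\alpha}^{\hat{j}}_{\hat{k}} > 0$, scaling by $1/\bar{\alpha}^{\hat{j}}_{\hat{k}}$ gives a representative with $\alpha^{\hat{j}}_{\hat{k}} = 1$; this representative is an extreme point of the slice $\mac{C}_{(\hat{k}, \hat{j})} = \{\vc{\pi} \in \mac{C} : \alpha^{\hat{j}}_{\hat{k}} = 1\}$, because an extreme ray of a cone intersected with a normalizing hyperplane on which it does not vanish yields an extreme point of the resulting polyhedron (and conversely every extreme point of $\mac{C}_{(\hat{k},\hat{j})}$ generates a ray of $\mac{C}$, not necessarily extreme, but that direction is irrelevant for the forward inclusion). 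Ranging over all choices of $(\hat{k},\hat{j})$ with $\hat{k} \in K$, $\hat{j} \in M \cup \{0\}$ therefore captures every non-vertical facet-defining inequality, giving the first claim. For the converse, I would simply invoke Proposition~\ref{prop:projection}: any $\bar{\vc{\pi}}$ that is merely feasible for $\mac{C}_{(\hat{k},\hat{j})}$ is in particular a feasible point of $\mac{C}$, so \eqref{eq:cone_3} is valid for $\proj_{\vc{x}} \conv(\mac{S})$ with no extremality needed.

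There is no serious obstacle here; the statement is flagged in the text as following ``directly'' from the preceding propositions. The one point requiring a line of care is the normalization argument---verifying that passing from an extreme ray of the cone $\mac{C}$ to an extreme point of the affine slice $\mac{C}_{(\hat{k},\hat{j})}$ is legitimate, i.e.\ that no extreme rays with $\bar\alpha^{\hat j}_{\hat k}>0$ are lost and that the slice does not introduce spurious non-extreme representatives of genuine facets. Both are standard facts about polyhedral cones (a cone $\{\vc\pi \ge \vc 0 : H\vc\pi = \vc 0\}$ intersected with $\{\alpha^{\hat j}_{\hat k} = 1\}$ has extreme points exactly the normalized extreme rays not lying in that coordinate hyperplane), and I would state them in a sentence or two rather than belabor them. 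The proof thus reduces to citing Propositions~\ref{prop:projection} and \ref{prop:vertical} and inserting this normalization remark.
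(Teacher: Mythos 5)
Your proposal is correct and follows essentially the same route as the paper, which likewise derives the result directly from Propositions~\ref{prop:projection} and \ref{prop:vertical} by scaling the extreme ray so that $\bar{\alpha}^{\hat{j}}_{\hat{k}} = 1$ and viewing it as an extreme point of the slice $\mac{C}_{(\hat{k},\hat{j})}$, with the converse following from feasibility in $\mac{C}$. Your extra remark on why normalizing an extreme ray of the pointed cone yields an extreme point of the affine slice is a valid and welcome clarification of a step the paper leaves implicit.
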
	

%\smallskip
Next, we identify structural properties of the extreme points of $\mac{C}_{(\hat{k}, \hat{j})}$ that can facilitate the derivation of \textit{useful} valid inequalities for $\proj_{\vc{x}} \conv(\mac{S})$.

\begin{proposition} \label{prop:ext_point_1}
	Let $\bar{\vc{\pi}}$ be an extreme point of $\mac{C}_{(\hat{k}, \hat{j})}$ for some $\hat{k} \in K$ and $\hat{j} \in M \cup \{0\}$. Then there exist $\mac{K}^j \subseteq K$ and $\mac{T}^j \subseteq T$ for all $j \in M \cup \{0\}$, and $U_1 \subseteq N \times M$ and $U_2 \subseteq M$ such that:
	\begin{itemize}
		\item[(i)] for each $j \in M \cup \{0\}$, $\bar{\alpha}^j_k > 0$ for $k \in \mac{K}^j$, and $\bar{\alpha}^j_k = 0$ otherwise, except for $\bar{\alpha}^{\hat{j}}_{\hat{k}}$ which is 1.
		\item[(ii)] $\hat{k} \notin \mac{K}^{\hat{j}}$,		
		\item[(iii)] for each $j \in M \cup \{0\}$, $\bar{\beta}^j_t > 0$ for $t \in \mac{T}^j$, and $\bar{\beta}^j_t = 0$ otherwise,
		\item[(iv)] $\bigcap_{j=0}^m \mac{T}^j = \emptyset$,
		
		\item[(v)] $\Phi_{i,j} = 0$ for all $(i, j) \in U_1$, where
		\begin{align*}
			\Phi_{i,j} & =  \sum_{k \in K: k=\hat{k}, j=\hat{j}} (A^k_{j,i} + b^k_i) - \sum_{k \in K: k=\hat{k}, \hat{j} = 0} b^k_i + \sum_{k \in \mac{K}^j}  (A^k_{j,i} + b^k_i) \bar{\alpha}^j_k - \sum_{k \in \mac{K}^0} b^k_i \bar{\alpha}^0_k \\ 
            & \qquad{} + \sum_{t \in \mac{T}^j} E_{t, i} \bar{\beta}^j_t - \sum_{t \in \mac{T}^0} E_{t, i}\bar{\beta}^0_t 
		\end{align*}
		%$\sum_{k=1}^{\kappa}  \left((A^k_{j,i} + b^k_i) \bar{\alpha}^j_k - b^k_i \bar{\alpha}^0_k \right) + \sum_{t=1}^{\tau} \left( E_{t, i} \bar{\beta}^j_t - E_{t, i}\bar{\beta}^0_t \right) = 0$ for all $(i, j) \in U_1$,
		
		\item[(vi)] $\Psi_j = 0$ for all $j \in U_2$, where
		\begin{equation*}
			\Psi_j = \sum_{k \in K: k=\hat{k}, j=\hat{j}} (c^j_k - d_k) + \sum_{k \in K: k=\hat{k}, \hat{j} = 0} d_k + \sum_{k \in \mac{K}^j} (c^j_k - d_k) \bar{\alpha}^j_k + \sum_{k \in \mac{K}^0} d_k \bar{\alpha}^0_k - \sum_{t \in \mac{T}^j} f_t \bar{\beta}^j_t + \sum_{t \in \mac{T}^0} f_t \bar{\beta}^0_t 
		\end{equation*}
		%$\sum_{k=1}^{\kappa} \left((c^j_k - d_k) \bar{\alpha}^j_k + d_k \bar{\alpha}^0_k \right) + \sum_{t=1}^{\tau} \left( - f_t \bar{\beta}^j_t + f_t \bar{\beta}^0_t \right) = 0$ for all $j \in U_2$,
		
		\item[(vii)] for each $i \in N$, $\bar{\gamma}^0_i = \max_{j \in M} \left\{ (\Phi_{i,j})^+ \right\}$,
		%$\bar{\gamma}^0_i = \max_{j \in M} \left\{ \left( \sum_{k=1}^{\kappa}  (A^k_{j,i} + b^k_i) \bar{\alpha}^j_k - b^k_i \bar{\alpha}^0_k + \sum_{t=1}^{\tau} E_{t, i} \bar{\beta}^j_t - E_{t, i}\bar{\beta}^0_t \right)^+ \right\}$,
		
		\item[(viii)] for each $i \in N$ and $j \in M$, $\bar{\gamma}^j_i = \bar{\gamma}^0_i - \Phi_{i,j}$,
		%$\bar{\gamma}^j_i = \bar{\gamma}^0_i - \sum_{k=1}^{\kappa}  \left((A^k_{j,i} + b^k_i) \bar{\alpha}^j_k - b^k_i \bar{\alpha}^0_k \right) - \sum_{t=1}^{\tau} \left( E_{t, i} \bar{\beta}^j_t - E_{t, i}\bar{\beta}^0_t \right)$,
		
		\item[(ix)] $\bar{\theta}_0 = \max_{j \in M} \left\{ (\Psi_j)^+ \right\}$,
		%$\bar{\theta}_0 = \max_{j \in M} \left\{  \left( \sum_{k=1}^{\kappa} d_k \bar{\alpha}^0_k + (c^j_k - d_k) \bar{\alpha}^j_k + \sum_{t=1}^{\tau} f_t \bar{\beta}^0_t - f_t \bar{\beta}^j_t  \right)^+ \right\}$,
		
		\item[(x)] for each $j \in M$, $\bar{\theta}_j = \bar{\theta}_0 - \Psi_j$,
		%$\bar{\theta}_j = \bar{\theta}_0 - \sum_{k=1}^{\kappa} \left(d_k \bar{\alpha}^0_k + (c^j_k - d_k) \bar{\alpha}^j_k \right) - \sum_{t=1}^{\tau} \left( f_t \bar{\beta}^0_t - f_t \bar{\beta}^j_t \right)$,
		
		\item[(xi)] the valid inequality of $\proj_{\vc{x}} \conv(\mac{S})$ produced by $\bar{\vc{\pi}}$ is of the following form
		\begin{equation*}
			\sum_{i=1}^n \left( \sum_{k \in K: k=\hat{k}, \hat{j} = 0} b^k_i + \sum_{k \in \mac{K}^0} \bar{\alpha}^0_{k} b^k_i + \sum_{t \in \mac{T}^0} \bar{\beta}^0_{t} E_{t,i} + \bar{\gamma}^0_i \right) x_i \geq \sum_{k \in K: k=\hat{k}, \hat{j} = 0} d_k + \sum_{k \in \mac{K}^0} \bar{\alpha}^0_{k} d_k + \sum_{t \in \mac{T}^0} \bar{\beta}^0_{t} f_t - \bar{\theta}_0
		\end{equation*}
		
		\item[(xii)] $|U_1| + |U_2| \geq \sum_{j=0}^m|\mac{K}^j|+|\mac{T}^j| + \sum_{i=1}^n \mathbb{I}(\bar{\gamma}^0_i) - \sum_{i=1}^n\sum_{j=1}^m \mathbb{I}(\bar{\gamma}^0_i)(1 - \mathbb{I}(\bar{\gamma}^j_i)) + \mathbb{I}(\bar{\theta}_0) - \sum_{j=1}^m \mathbb{I}(\bar{\theta}_0)(1 - \mathbb{I}(\bar{\theta}_j))$		
	\end{itemize}

    Conversely, assume that $\bar{\vc{\pi}} \in \Re^{(m+1)(\kappa + \tau + n + 1)}$ satisfies conditions (i)--(xii) above for for some $\hat{k} \in K$, $\hat{j} \in M \cup \{0\}$, $\mac{K}^j \subseteq K$, $\mac{T}^j \subseteq T$ for all $j \in M \cup \{0\}$, $U_1 \subseteq N \times M$ and $U_2 \subseteq M$.
    Then, $\bar{\vc{\pi}} \in \mac{C}_{(\hat{k},\hat{j})}$.
\end{proposition}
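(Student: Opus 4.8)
The proposition asserts an equivalence, so the plan is to prove the two implications separately, spending most of the effort on the forward direction (extreme point $\Rightarrow$ (i)--(xii)); the converse reduces to a direct feasibility check. Before splitting, I would record two simplifications valid for \emph{any} $\bar{\vc{\pi}}\in\mac{C}_{(\hat k,\hat j)}$. With $\Phi_{i,j}$ and $\Psi_j$ as in (v)--(vi), the $\alpha$- and $\beta$-parts of \eqref{eq:cone_1} are exactly $\Phi_{i,j}$ and those of \eqref{eq:cone_2} are exactly $\Psi_j$, where the two leading $\hat k$-terms in the definitions absorb the normalized multiplier $\bar\alpha^{\hat j}_{\hat k}=1$ (legitimate because $\hat k\notin\mac{K}^{\hat j}$). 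Hence \eqref{eq:cone_1} reads $\Phi_{i,j}+\bar\gamma^j_i-\bar\gamma^0_i=0$ and \eqref{eq:cone_2} reads $\Psi_j+\bar\theta_j-\bar\theta_0=0$, so (viii) and (x) are mere algebraic restatements, and (xi) is obtained by substituting the support decomposition of $\bar{\vc{\alpha}},\bar{\vc{\beta}}$ into the projected inequality \eqref{eq:cone_3}. This isolates the genuine work in (i)--(vii), (ix), and (xii).

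For the forward direction I would define $\mac{K}^j:=\{k:\bar\alpha^j_k>0\}$ (deleting $\hat k$ from $\mac{K}^{\hat j}$, which gives (ii) by convention) and $\mac{T}^j:=\{t:\bar\beta^j_t>0\}$, yielding (i) and (iii). Conditions (iv), (vii), (ix) are each handled by a \emph{two-sided perturbation}: an extreme point admits no nonzero $\vc{d}$ with both $\bar{\vc{\pi}}\pm\epsilon\vc{d}$ feasible. For (iv), if some $t^\star$ were in every $\mac{T}^j$, the direction $d\beta^j_{t^\star}=1$ for all $j\in M\cup\{0\}$ leaves \eqref{eq:cone_1}--\eqref{eq:cone_2} unchanged (only the differences $\beta^j_{t^\star}-\beta^0_{t^\star}$ enter), and both signs stay nonnegative, a contradiction. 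For (vii), fixing $i$ and taking $d\gamma^0_i=d\gamma^j_i=1$ for all $j\in M$ cancels in \eqref{eq:cone_1}; feasibility of both signs would require all of $\bar\gamma^0_i,\bar\gamma^1_i,\dots,\bar\gamma^m_i$ strictly positive, so extremity forces one to vanish, and combining this with (viii) and $\bar\gamma^j_i=\bar\gamma^0_i-\Phi_{i,j}\ge0$ pins $\bar\gamma^0_i=\max_{j}(\Phi_{i,j})^+$; (ix) is identical via \eqref{eq:cone_2}. I would then set $U_1:=\{(i,j):\Phi_{i,j}=0\}$ and $U_2:=\{j:\Psi_j=0\}$, so (v)--(vi) hold by construction.

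The crux, and the step I expect to be the main obstacle, is (xii), which recasts extremity as a rank count. I would translate ``$\bar{\vc{\pi}}$ is extreme'' into ``the only $\vc{d}$ supported on $\mathrm{supp}(\bar{\vc{\pi}})$ satisfying the homogeneous forms of \eqref{eq:cone_1}--\eqref{eq:cone_2} and $d\alpha^{\hat j}_{\hat k}=0$ is $\vc{d}=\vc{0}$.'' Using \eqref{eq:cone_1} to eliminate each $d\gamma^j_i$ ($j\in M$) and \eqref{eq:cone_2} to eliminate each $d\theta_j$, then eliminating the surviving $d\gamma^0_i$ (for $\bar\gamma^0_i>0$) and $d\theta_0$ against one tight row indexed by a maximizer in $Z_i:=\{j:\bar\gamma^j_i=0\}$ (nonempty by (vii)) and in $Z^{\theta}:=\{j\in M:\bar\theta_j=0\}$, leaves a pure homogeneous system in $(d\vc{\alpha},d\vc{\beta})$ that must have full column rank $F':=\sum_{j=0}^m(|\mac{K}^j|+|\mac{T}^j|)$. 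Its rows split into \emph{value-type} rows $d\Phi_{i,j}=0$ from groups with $\bar\gamma^0_i=0$ (whose underlying conditions $\Phi_{i,j}=0$ sit in $U_1$, since $\bar\gamma^0_i=\bar\gamma^j_i=0$ forces $\Phi_{i,j}=0$), the analogous $\Psi_j=0$ rows in $U_2$, and \emph{difference-type} rows $d\Phi_{i,j}-d\Phi_{i,j'}=0$, of which each group with $\bar\gamma^0_i>0$ contributes $|Z_i|-1$ (and $\mathbb{I}(\bar\theta_0)(|Z^{\theta}|-1)$ for $\theta$). Since the total number of pure rows is at least $F'$, subtracting the difference-type rows gives $|U_1|+|U_2|\ge F'-\sum_{i:\bar\gamma^0_i>0}(|Z_i|-1)-\mathbb{I}(\bar\theta_0)(|Z^{\theta}|-1)$, and a short indicator identity rewrites the right-hand side as the expression in (xii). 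The hard part is precisely this bookkeeping: checking that each elimination is licit (nonempty $Z_i$, independent pivots) and that the difference-type contributions reproduce the two correction sums in (xii) with the exact signs.

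For the converse I assume (i)--(xii) and verify membership directly. The normalization $\bar\alpha^{\hat j}_{\hat k}=1$ is (i); nonnegativity of $\bar{\vc{\alpha}},\bar{\vc{\beta}}$ is (i),(iii); and nonnegativity of the $\gamma$- and $\theta$-blocks follows from (vii)--(x) together with $\max_{j}(\Phi_{i,j})^+\ge(\Phi_{i,j})^+\ge\Phi_{i,j}$, which gives $\bar\gamma^j_i=\bar\gamma^0_i-\Phi_{i,j}\ge0$, and symmetrically for $\theta$. Finally, substituting (viii) and the definition (v) collapses the left-hand side of \eqref{eq:cone_1} to $\Phi_{i,j}-\Phi_{i,j}=0$, while (x) with (vi) gives \eqref{eq:cone_2}; hence every defining constraint of $\mac{C}_{(\hat k,\hat j)}$ holds and $\bar{\vc{\pi}}\in\mac{C}_{(\hat k,\hat j)}$. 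I note that (ii), (iv), (xi), (xii) are not needed here---they are the additional structure produced by extremity in the forward direction.
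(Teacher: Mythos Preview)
Your proposal is correct and follows the same underlying linear-algebraic principle as the paper (an extreme point is characterized by full rank of the active constraints), but the presentation differs in a useful way. The paper builds an explicit basis matrix for $\mac{C}_{(\hat k,\hat j)}$, organizes it into row/column blocks, and extracts (iv), (vii), (ix), (xii) from linear dependence of columns and a row-versus-column count in a sub-block; you instead argue via two-sided perturbation directions for (iv), (vii), (ix) and via Gaussian elimination of the $\gamma$- and $\theta$-variables for (xii). These are dual descriptions of the same rank condition, and both are valid. A secondary difference is the choice of $U_1,U_2$: the paper takes $U_1$ to be only those $(i,j)$ in its ``second row block'' with $\bar\gamma^0_i=0$, whereas you take the (possibly larger) set $\{(i,j):\Phi_{i,j}=0\}$; since the statement is existential and your $U_1,U_2$ contain the paper's, the inequality (xii) is only easier to satisfy with your choice, and your counting argument ($R_1+R_2+D\ge F'$ with $R_1\le|U_1|$, $R_2\le|U_2|$) still goes through. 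Your approach is arguably more transparent, since it avoids the block-matrix bookkeeping, while the paper's matrix layout makes the correspondence with the later \BLP aggregation steps (A1)--(A3) more explicit.
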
		

\begin{proof}
The coefficient matrix for the system of equations describing $\mac{C}_{(\hat{k}, \hat{j})}$ corresponding to the basic solution $\bar{\vc{\pi}}$ after appropriate rearrangement of rows and columns would be as follows:
	
\begin{gather} \label{eq:BLP matrix}
	\left[
	\def\arraystretch{2.5}
	\begin{array}{c||c|c|c|c||c|c||c}
		1 \, & \, 0 \, & \, 0 \, & \, 0 \, & \, 0 \, & \, 0 \, & \, 0 \, & \, 0 \, \\
		\hline
		\hline
		X_{(2, 1)} \, & \, X_{(2, 2)} \, & \, X_{(2, 3)} \, & \, X_{(2, 4)} \, & \, 0 \, & \, 0 \, & \, 0 \, & \, X_{(2, 8)} \, \\
		\hline
		X_{(3, 1)} \, & \, X_{(3, 2)} \, & \, X_{(3, 3)} \, & \, 0 \, & \, X_{(3, 5)} \, & \, 0 \, & \, 0 \, & \, X_{(3, 8)} \, \\
		\hline
		\hline
		X_{(4, 1)} \, & \, X_{(4, 2)} \, & \, X_{(4, 3)} \, & \, X_{(4, 4)} \, & \, 0 \, & \quad I \quad & \, 0 \, & \, X_{(4, 8)} \, \\
		\hline
		X_{(5, 1)} \, & \, X_{(5, 2)} \, & \, X_{(5, 3)} \, & \, 0 \, & \, X_{(5, 5)} \, & \, 0 \, & \quad I \quad & \, X_{(5, 8)} \, \\
		\hline
		\hline
		0 \, & \, 0 \, & \, 0 \, & \, 0 \, & \, 0 \, & \, 0 \, & \, 0 \, & \, X_{(6, 8)} \, \\
	\end{array}
	\right], 
\end{gather}
where $X_{(p,q)}$ is the submatrix of appropriate dimension in row block $p$ and column block $q$, and $0$ and $I$ are the zero and identity submatrices of appropriate dimension. 

%\smallskip
In the matrix \eqref{eq:BLP matrix}, the first column corresponds to variable $\bar{\alpha}^{\hat{j}}_{\hat{k}}$.
The second and third column blocks are respectively associated with variables $\bar{\alpha}^j_k$ for $(k,j) \in K \times M \cup \{0\} \setminus \left\{(\hat{k},\hat{j})\right\}$ and $\bar{\beta}^j_t$ for $(t,j) \in T \times M \cup \{0\}$ that have positive values in the basic solution.
Similarly, the fourth and fifth column blocks are respectively associated with variables $\bar{\gamma}^0_i$ for $i \in N$ and $\bar{\theta}_0$ that have positive values in the basic solution.
The next two column blocks respectively correspond to variables $\bar{\gamma}^j_i$ for $(i,j) \in N \times M$ and $\bar{\theta}_j$ for $j \in M$ with positive values in the basic solution. 	
The last column block represents the remaining basic variables that appear with value zero in the basic solution.

%\smallskip
In the basis matrix \eqref{eq:BLP matrix}, the first row shows the coefficients of the first constraint in the description of $\mac{C}_{(\hat{k}, \hat{j})}$, i.e., $\bar{\alpha}^{\hat{j}}_{\hat{k}} = 1$.
The second row block corresponds to constraints \eqref{eq:cone_1} for all $(i, j) \in N \times M$ that have a non-zero coefficient for at least one positive variable among $\bar{\alpha}^0_k$, $\bar{\alpha}^j_k$ for $k \in K$, $\bar{\beta}^0_t$, $\bar{\beta}^j_t$ for $t \in T$, and $\bar{\gamma}^0_i$.
This definition implies that for each row in the second row block, there is a non-zero element in the same row of at least one of the submatrices $X_{(2,2)}$, $X_{(2,3)}$, and $X_{(2,4)}$. 
Similarly, the third row block corresponds to constraints \eqref{eq:cone_2} for all $j \in M$ that have a non-zero coefficient for at least one positive variable among $\bar{\alpha}^0_k$, $\bar{\alpha}^j_k$ for $k \in K$, $\bar{\beta}^0_t$, $\bar{\beta}^j_t$ for $t \in T$, and $\bar{\theta}_0$.
This definition implies that for each row in the third row block, there is a non-zero element in the same row of the at least one of the submatrices $X_{(3,2)}$, $X_{(3,3)}$, and $X_{(3,5)}$.
The fourth row block contains the coefficients of constraints \eqref{eq:cone_1} for all $(i, j) \in N \times M$ such that $\bar{\gamma}^j_i > 0$.
As a result, the submatrix representing the coefficients of $\bar{\gamma}^j_i$ in these constraints forms an identity matrix as shown in \eqref{eq:BLP matrix}. 
Similarly, the fifth row block contains the coefficients of constraints \eqref{eq:cone_2} for all $j \in M$ such that $\bar{\theta}_j > 0$.
As a result, the submatrix representing the coefficients of $\bar{\theta}_j$ in these constraints forms an identity matrix as shown in \eqref{eq:BLP matrix}. 
The last row block corresponds to all the remaining constraints in \eqref{eq:cone_1} and \eqref{eq:cone_2} that are not included in the previous row blocks.

%\smallskip
To prove part (i), for each $j \in M \cup \{0\}$, define $\mac{K}^j$ to include constraints $k \in K$ whose aggregation weights $\bar{\alpha}^j_k$ correspond to a column in the second column block. 
By definition, these variables are positive, and the remaining variables that are not associated with columns of the second column block are zero.
Because of the structure of the matrix \eqref{eq:BLP matrix}, the column corresponding to $\bar{\alpha}^{\hat{j}}_{\hat{k}}$ is not among those in the second column block, because it is represented in the first column.
As a result, $\hat{k} \notin \mac{K}^{\hat{j}}$, which verifies part (ii).
To prove part (iii), for each $j \in M \cup \{0\}$, define $\mac{T}^j$ to include constraints $t \in T$ whose aggregation weights $\bar{\beta}^j_t$ correspond to a column in the third column block. 
By definition, these variables are positive, and the remaining variables that are not associated with columns of the third column block are zero.
To show part (iv), assume by contradiction that there exists $t^* \in \bigcap_{j=0}^m \mac{T}^j$. 
Therefore, the columns corresponding to the coefficients of variables $\bar{\beta}^j_{t^*}$ for all $j \in M \cup \{0\}$ are included in the third column block of \eqref{eq:BLP matrix}.
It follows from the description of the constraints \eqref{eq:cone_1} and \eqref{eq:cone_2} in $\mac{C}_{(\hat{k}, \hat{j})}$ that these columns linearly dependent as their summation yields the zero vector.
This is a contradiction to the fact that \eqref{eq:BLP matrix} is a basis matrix.

%\smallskip
For part (v), we define $U_1$ to be the set of $(i,j) \in N \times M$ corresponding to constraints \eqref{eq:cone_1} in rows of the second row block of \eqref{eq:BLP matrix} such that $\bar{\gamma}^0_i = 0$.
As a result, \eqref{eq:cone_1} implies that $\sum_{k=1}^{\kappa}  \left((A^k_{j,i} + b^k_i) \bar{\alpha}^j_k - b^k_i \bar{\alpha}^0_k \right) + \sum_{t=1}^{\tau} \left( E_{t, i} \bar{\beta}^j_t - E_{t, i}\bar{\beta}^0_t \right) = 0$ for all $(i, j) \in U_1$.
Using the results of parts (i)--(iii), we may rewrite the left-hand-side of this equation as $\sum_{k \in K: k=\hat{k}, j=\hat{j}} (A^k_{j,i} + b^k_i) - \sum_{k \in K: k=\hat{k}, \hat{j} = 0} b^k_i + \sum_{k \in \mac{K}^j}  (A^k_{j,i} + b^k_i) \bar{\alpha}^j_k - \sum_{k \in \mac{K}^0} b^k_i \bar{\alpha}^0_k + \sum_{t \in \mac{T}^j} E_{t, i} \bar{\beta}^j_t - \sum_{t \in \mac{T}^0} E_{t, i}\bar{\beta}^0_t$, which is equal to $\Phi_{i,j}$ defined in part (v).
Similarly for part (vi), we define $U_2$ to be the set of $j \in M$ corresponding to constraints \eqref{eq:cone_2} in rows of the third row block of \eqref{eq:BLP matrix} if $\bar{\theta}_0 = 0$, otherwise $U_2 = \emptyset$.
As a result, \eqref{eq:cone_2} implies that $\sum_{k=1}^{\kappa} \left((c^j_k - d_k) \bar{\alpha}^j_k + d_k \bar{\alpha}^0_k \right) + \sum_{t=1}^{\tau} \left( - f_t \bar{\beta}^j_t + f_t \bar{\beta}^0_t \right) = 0$ for all $j \in U_2$.
Using the results of parts (i)--(iii), we may rewrite the left-hand-side of this equation as $\sum_{k \in K: k=\hat{k}, j=\hat{j}} (c^j_k - d_k) + \sum_{k \in K: k=\hat{k}, \hat{j} = 0} d_k + \sum_{k \in \mac{K}^j} (c^j_k - d_k) \bar{\alpha}^j_k + \sum_{k \in \mac{K}^0} d_k \bar{\alpha}^0_k - \sum_{t \in \mac{T}^j} f_t \bar{\beta}^j_t + \sum_{t \in \mac{T}^0} f_t \bar{\beta}^0_t$, which is equal to $\Psi_j$ defined in part (vi).

%\smallskip
For part (vii), consider $i \in N$. 
On the one hand, it follows from \eqref{eq:cone_1} that the vectors containing the coefficient of $\gamma^j_i$ for all $j \in M \cup \{0\}$ are linearly dependent as their summation yields the zero vector.
As a result, all these vectors cannot be included in the basis matrix \eqref{eq:BLP matrix}.
Therefore, $\bar{\gamma}^j_i = 0$ for some $j \in M \cup \{0\}$.
On the other hand, the constraints \eqref{eq:cone_1} for fixed $i \in N$ imply that $\bar{\gamma}^0_i \geq \sum_{k=1}^{\kappa}  (A^k_{j,i} + b^k_i) \bar{\alpha}^j_k - b^k_i \bar{\alpha}^0_k + \sum_{t=1}^{\tau} E_{t, i} \bar{\beta}^j_t - E_{t, i}\bar{\beta}^0_t$ for each $j \in M$ because $\bar{\gamma}^j_i \geq 0$ by definition of $\mac{C}_{(\hat{k}, \hat{j})}$.
Using the results of parts (i)--(iii), we may rewrite the right-hand-side of this inequality as $\Phi_{i,j}$ given previously.
Also, since $\bar{\vc{\pi}}$ is a feasible solution of $\mac{C}_{(\hat{k}, \hat{j})}$, we have that $\bar{\gamma}^0_i \geq 0$.
Thus, we can combine the above inequalities as $\bar{\gamma}^0_i \geq \max_{j \in M} \left\{ (\Phi_{i,j})^+ \right\}$.
Because of the previous argument, this inequality must be satisfied at equality since otherwise $\bar{\gamma}^j_i$ would be strictly positive for all $j \in M \cup \{0\}$. 
Once $\bar{\gamma}^0_i$ is calculated, we may use the equations in \eqref{eq:cone_1} to calculate $\bar{\gamma}^j_i = \bar{\gamma}^0_i - \sum_{k=1}^{\kappa}  \left((A^k_{j,i} + b^k_i) \bar{\alpha}^j_k - b^k_i \bar{\alpha}^0_k \right) - \sum_{t=1}^{\tau} \left( E_{t, i} \bar{\beta}^j_t - E_{t, i}\bar{\beta}^0_t \right) = \bar{\gamma}^0_i - \Phi_{i,j}$ for all $j \in M$, verifying the equation in part (viii).
The proof of parts (ix) and (x) follow from similar arguments to those of parts (vii) and (viii) when applied to \eqref{eq:cone_2} and variables $\theta_j$ for $j \in M \cup \{0\}$.
The inequality form in part (xi) is obtained by using the results of parts (i)--(iii) to properly separate the terms in the inequality \eqref{eq:cone_3}.

%\smallskip
For part (xii), since the submatrices in position $(4,6)$ and $(5, 7)$ of matrix \eqref{eq:BLP matrix} are identity matrices, we can use basic column operations to make all elements of the submatrices $X_{(4, 2)}$, $X_{(4, 3)}$, $X_{(4, 4)}$, $X_{(5, 2)}$, $X_{(5, 3)}$, and $X_{(5, 5)}$ zero.
As a result, the nonzero elements of column blocks 2--5 are in rows included in the row blocks 2 and 3, forming the following submatrix:   

\begin{gather} \label{eq:BLP submatrix}
	\left[
	\def\arraystretch{2.5}
	\begin{array}{c|c|c|c}
		X_{(2, 2)} \, & \, X_{(2, 3)} \, & \, X_{(2, 4)} \, & \, 0 \, \\
		\hline
		X_{(3, 2)} \, & \, X_{(3, 3)} \, & \, 0 \, & \, X_{(3, 5)} \, \\
	\end{array}
	\right].
\end{gather}

Since the column vectors in this submatrix must be linearly independent due to being included in the basis matrix, the number of its columns cannot exceed the number of its rows.
We calculate the number of columns as follows.
By definition of sets $\mac{K}^j$, the number of columns in the first column block of \eqref{eq:BLP submatrix} is equal to $\sum_{j=0}^m|\mac{K}^j|$.
Similarly, the definition of $\mac{T}^j$ implies that the number of columns in the second column block of \eqref{eq:BLP submatrix} is equal to $\sum_{j=0}^m|\mac{T}^j|$.
It follows from the definition of the columns included in the third column block that the number of these columns is equal to the number of variables $\bar{\gamma}^0_i$ for $i \in N$ that are strictly positive, which can be written as $\sum_{i=1}^n \mathbb{I}(\bar{\gamma}^0_i)$.
Using a similar argument, we can calculate the number of columns in the fourth column block as $\mathbb{I}(\bar{\theta}_0)$.
Next, we calculate the number of rows in \eqref{eq:BLP submatrix}.
It follows from the structure of \eqref{eq:BLP matrix} that the rows associated with pair $(i,j) \in N \times M$ in the second row block must have $\bar{\gamma}^j_i = 0$.
By definition of $U_1$ given above, these rows are partitioned into those with $(i,j) \in U_1$ where $\bar{\gamma}^0_i = 0$, and those with $(i,j) \notin U_1$ where $\bar{\gamma}^0_i > 0$.
The latter partition can be described as $\{(i,j) \in N \times M | \bar{\gamma}^j_i = 0, \, \bar{\gamma}^0_i > 0\}$.
We may calculate the cadinality of this set as $\sum_{i=1}^n\sum_{j=1}^m \mathbb{I}(\bar{\gamma}^0_i)(1 - \mathbb{I}(\bar{\gamma}^j_i))$.
Therefore, the number of rows in the first row block of \eqref{eq:BLP submatrix} is equal to $|U_1| + \sum_{i=1}^n\sum_{j=1}^m \mathbb{I}(\bar{\gamma}^0_i)(1 - \mathbb{I}(\bar{\gamma}^j_i))$.
Using a similar argument for the second row block of \eqref{eq:BLP submatrix}, we can calculate the number of its rows as $|U_2| + \sum_{j=1}^m \mathbb{I}(\bar{\theta}_0)(1 - \mathbb{I}(\bar{\theta}_j))$.
Consequently, using the argument above, the number of rows in \eqref{eq:BLP submatrix} must be no less than the number of its columns, i.e., $|U_1| + \sum_{i=1}^n\sum_{j=1}^m \mathbb{I}(\bar{\gamma}^0_i)(1 - \mathbb{I}(\bar{\gamma}^j_i)) + |U_2| + \sum_{j=1}^m \mathbb{I}(\bar{\theta}_0)(1 - \mathbb{I}(\bar{\theta}_j)) \geq \sum_{j=0}^m|\mac{K}^j|+|\mac{T}^j| + \sum_{i=1}^n \mathbb{I}(\bar{\gamma}^0_i) + \mathbb{I}(\bar{\theta}_0)$, yielding the result in part (xi).

%\medskip
Next, we prove the converse statement.
It follows from conditions (i)--(iv), and (vii)--(x) that all components of $\bar{\vc{\pi}}$ are non-negative and $\bar{\alpha}^{\hat{j}}_{\hat{k}} =  1$.
As established earlier in this proof, conditions (vii)--(x) together with the equations in (v) and (vi) imply that $\bar{\vc{\pi}}$ satisfies equations \eqref{eq:cone_1} and \eqref{eq:cone_2}. 
We conclude that $\bar{\vc{\pi}} \in \mac{C}_{(\hat{k}, \hat{j})}$.	
\end{proof}

\medskip
We are now ready to prove the main result of this section outlined in Claim~\ref{claim:BLP}.
In particular, we show that the structural properties of the extreme points of $\mac{C}_{(\hat{k}, \hat{j})}$ obtained in Proposition~\ref{prop:ext_point_1} translate into the steps of the \BLP procedure to derive the facet-defining inequalities for $\proj_{\vc{x}} \conv(\mac{S})$.
We further show that any \BLP inequality is valid for $\proj_{\vc{x}} \conv(\mac{S})$.
The combination of these statements lead to our main result, as presented next. 

\begin{theorem} \label{thm:BLP}
The collection of all \BLP inequalities together with $\Xi$ and $\Delta_m$ describes $\proj_{\vc{x}} \conv(\mac{S})$.
    %Any non-vertical facet-defining inequality of $\proj_{\vc{x}} \conv(\mac{S})$ can be obtained as a \BLP inequality.
\end{theorem}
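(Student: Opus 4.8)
The plan is to show the two inclusions implicit in the statement: every \BLP inequality is valid for $\proj_{\vc{x}} \conv(\mac{S})$, and conversely every non-vertical facet-defining inequality of $\proj_{\vc{x}} \conv(\mac{S})$ is a \BLP inequality (the vertical ones being captured by the description of $\Xi$, and membership in $\Delta_m$ being trivially respected). The engine for both directions is Proposition~\ref{prop:ext_point_1}, which puts the extreme points of $\mac{C}_{(\hat{k},\hat{j})}$ into a canonical form, together with Proposition~\ref{prop:class}, which reduces everything to analyzing these cones for all choices of the base pair $(\hat{k},\hat{j})$.

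For the validity (sufficiency for the ``only if'' direction of the facet description and, more importantly, soundness), I would take an arbitrary execution of steps (A1)--(A3), (R1), (R2) subject to constraint (C1), and exhibit the corresponding $\bar{\vc{\pi}}$: set $\bar{\alpha}^{\hat{j}}_{\hat{k}}=1$, let $\bar{\alpha}^j_k$ be the weight $\alpha^j_k$ (resp.\ $\alpha^0_k$) chosen in (A2) for $k\in\mac{K}^j$ and zero otherwise, let $\bar{\beta}^j_t$ be the weight chosen in (A3) for $t\in\mac{T}^j$ and zero otherwise, and then \emph{define} $\bar{\gamma}^0_i,\bar{\gamma}^j_i,\bar{\theta}_0,\bar{\theta}_j$ by the formulas in parts (vii)--(x) of Proposition~\ref{prop:ext_point_1}. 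One checks that these are exactly the nonnegative values forced by the equations \eqref{eq:cone_1}, \eqref{eq:cone_2}, so $\bar{\vc{\pi}}\in\mac{C}_{(\hat{k},\hat{j})}$, and then the converse part of Proposition~\ref{prop:class} gives validity of the associated inequality \eqref{eq:cone_3}. The remaining point is bookkeeping: the replacements (R1) and (R2) in the \BLP procedure --- collapsing the surviving terms $x_iy_j$ into $p_ix_i$ and the surviving $y_j$ into the constant $p_0$ --- must be shown to reproduce exactly the coefficient $\sum_{k\in K:k=\hat k,\hat j=0}b^k_i+\sum_{k\in\mac{K}^0}\bar\alpha^0_kb^k_i+\sum_{t\in\mac{T}^0}\bar\beta^0_tE_{t,i}+\bar\gamma^0_i$ and the right-hand side appearing in part (xi). This is where the ``most positive coefficient'' rule in (R1)/(R2) encodes the choice $\bar\gamma^0_i=\max_{j\in M}(\Phi_{i,j})^+$, $\bar\theta_0=\max_{j\in M}(\Psi_j)^+$, and where counting the dropped terms reproduces condition (C1) via the $q_i$'s and part (xii).

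For the converse (completeness), let an inequality be non-vertical and facet-defining for $\proj_{\vc{x}}\conv(\mac{S})$. By Propositions~\ref{prop:projection} and \ref{prop:vertical} it arises from an extreme ray of $\mac{C}$ with $\bar\alpha^{\hat j}_{\hat k}>0$ for some $(\hat k,\hat j)$, hence after scaling from an extreme point of $\mac{C}_{(\hat k,\hat j)}$. Proposition~\ref{prop:ext_point_1} then supplies the data $\mac{K}^j,\mac{T}^j,U_1,U_2$ and the closed forms (vii)--(xi); I would read off from these precisely the choices made in (A1) (the base pair $(\hat k,\hat j)$ and the $y_i^2\to y_i$, $y_iy_l\to 0$ substitutions, which are exactly the multiplications implicit in \eqref{eq:disjunctive_1}--\eqref{eq:disjunctive_2} once $u^j_i$ is read as $x_iy_j$), (A2) (the sets $\mac{K}^j$ with weights $\bar\alpha^j_k$, noting $\hat k\notin\mac{K}^{\hat j}$ from part (ii)), and (A3) (the sets $\mac{T}^j$ with weights $\bar\beta^j_t$, with empty common intersection from part (iv)). The replacements (R1)/(R2) are then the manifestation of parts (vii)/(ix): the surviving $x_iy_j$ terms all have coefficient $\Phi_{i,j}\le\bar\gamma^0_i$, the maximal ones attaining it, so after adding the slack $\bar\gamma^j_i$ they collapse to $\bar\gamma^0_i x_i=p_ix_i$; symmetrically for the $y_j$'s and $\bar\theta_0$. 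Finally, part (xii) is exactly inequality (C1) (after matching $\sum_i\mathbb{I}(q_i)-q_i$ against $\sum_i\mathbb{I}(\bar\gamma^0_i)-\sum_{i,j}\mathbb{I}(\bar\gamma^0_i)(1-\mathbb{I}(\bar\gamma^j_i))$ and likewise for the $\theta$-terms), so the extreme-point condition translates into the cardinality requirement in the \BLP procedure. Hence the inequality is a class-$(\hat k,\hat j)$ \BLP inequality.

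The main obstacle I anticipate is the precise translation between the ``generate-and-substitute'' description of the \BLP procedure and the extreme-point characterization of Proposition~\ref{prop:ext_point_1} --- in particular, verifying that the counting identity in condition (C1), stated in terms of $q_i$ and the indicator $\mathbb{I}(q_i)$, matches the rank/independence count in part (xii) stated in terms of $\mathbb{I}(\bar\gamma^0_i)$ and $\mathbb{I}(\bar\gamma^j_i)$, and that a generic feasible (not necessarily extreme) choice of weights still yields a valid \BLP inequality so that the procedure is sound without over-restricting. Handling the edge cases where $p_i=q_i=0$ or $p_0=q_0=0$ (no surviving positive term), and the role of the base pair with $\hat j=0$ versus $\hat j\in M$ in the sign bookkeeping of $\Phi_{i,j}$ and $\Psi_j$, will require care but is otherwise routine.
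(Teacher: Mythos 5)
Your proposal is correct and follows essentially the same route as the paper: both directions are reduced to the converse and forward parts of Propositions~\ref{prop:class} and~\ref{prop:ext_point_1}, with (A1)--(A3) matched to the sets $\mac{K}^j,\mac{T}^j$ and weights $\bar{\vc{\alpha}},\bar{\vc{\beta}}$, the replacements (R1)/(R2) matched to the closed forms for $\bar{\gamma}^0_i$ and $\bar{\theta}_0$ in parts (vii) and (ix), and condition (C1) matched to the counting inequality in part (xii). The ``main obstacle'' you flag---reconciling $\sum_i(\mathbb{I}(q_i)-q_i)$ with the indicator sums in (xii)---is exactly the bookkeeping the paper carries out via the identities $\mathbb{I}(\bar{\gamma}^0_i)=\mathbb{I}(q_i)$ and $\sum_{j}\mathbb{I}(\bar{\gamma}^0_i)(1-\mathbb{I}(\bar{\gamma}^j_i))=q_i$, so your outline is on target.
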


\begin{proof}
To prove the result, we first show that any non-vertical facet-defining inequality of $\proj_{\vc{x}} \conv(\mac{S})$ can be obtained as a \BLP inequality.
	Let $g(\vc{x}) \geq g_0$ be a non-vertical facet-defining inequality of $\proj_{\vc{x}} \conv(\mac{S})$.
	It follows from Proposition~\ref{prop:class} that $g(\vc{x}) \geq g_0$ is of the form \eqref{eq:cone_3} for some extreme point $\bar{\vc{\pi}}$ of $\mac{C}_{(\hat{k}, \hat{j})}$ for some $\hat{k} \in K$ and $\hat{j} \in M \cup \{0\}$.
	Proposition~\ref{prop:ext_point_1} implies that there exist $\mac{K}^j \subseteq K$ and $\mac{T}^j \subseteq T$ for all $j \in M \cup \{0\}$, and $U_1 \subseteq N \times M$ and $U_2 \subseteq M$ that satisfy properties (i)--(xii).
	We use these sets and properties to define the proper parameters for the \BLP procedure that produces $g(\vc{x}) \geq g_0$.
	We begin with step (A1), where we choose $\hat{k}$ to be the index of the bilinear constraint and $\hat{j}$ to represent the weight of this constraint in the aggregation. 
	The pair $(\hat{k}, \hat{j})$ defines the class of the desired \BLP inequality.  
	According to this step, the weighted constraint will be of the form
	\begin{align*}
		&\sum_{i=1}^n b_i^{\hat{k}} x_i - \sum_{i=1}^n \sum_{j=1}^m b_i^{\hat{k}} x_i y_{j} \geq d_{\hat{k}} - \sum_{j=1}^m d_{\hat{k}} y_{j}, & \text{if }\hat{j} = 0		\\ %\label{eq:agg1}\\
		&\sum_{i=1}^n A^{\hat{k}}_{\hat{j},i} x_iy_{\hat{j}} + \sum_{i=1}^n b_i^{\hat{k}} x_i y_{\hat{j}} + c^{\hat{k}}_{\hat{j}} y_{\hat{j}} \geq d_{\hat{k}} y_{\hat{j}}, & \text{if }\hat{j} \in M.	%%\label{eq:agg2}
	\end{align*}

	For step (A2), we choose sets $\mac{K}^j$ for $j \in M \cup \{0\}$ to be those defined in Proposition~\ref{prop:ext_point_1} with aggregation weights $\bar{\alpha}^j_k$ for each $k \in \mac{K}^j$.
	These sets satisfy the conditions of (A2) because of properties (i) and (ii) of Proposition~\ref{prop:ext_point_1}.
	This yields the weighted constraints of the form
	\begin{align*}
		&\sum_{i=1}^n \alpha^0_kb_i^k x_i - \sum_{i=1}^n \sum_{j=1}^m \alpha^0_kb_i^k x_i y_j \geq \alpha^0_kd_k - \sum_{j=1}^m \alpha^0_kd_k y_j, & \forall k \in \mac{K}^0	%\label{eq:agg3}
        \\
		&\sum_{i=1}^n \alpha^j_k A^k_{j,i} x_iy_{j} + \sum_{i=1}^n \alpha^j_k b_i^k x_i y_j + \alpha^j_k c^k_j y_j \geq \alpha^j_k d_k y_j, & \forall k \in \mac{K}^j, \forall j \in M.	%\label{eq:agg4}
	\end{align*}

	For step (A3), we choose sets $\mac{T}^j$ for $j \in M \cup \{0\}$ to be those defined in Proposition~\ref{prop:ext_point_1} with aggregation weights $\bar{\beta}^j_t$ for each $t \in \mac{T}^j$.
	These sets satisfy the conditions of (A3) because of properties (iii) and (iv) of Proposition~\ref{prop:ext_point_1}.
	This yields the weighted constraints of the form
	\begin{align*}
		&\sum_{i=1}^n \beta^0_t E_{t,i} x_i - \sum_{i=1}^n \sum_{j=1}^m \beta^0_t E_{t,i} x_i y_j \geq \beta^0_t f_t - \sum_{j=1}^m \beta^0_t f_t y_j, & \forall t \in \mac{T}^0	%\label{eq:agg5}
        \\
		&\sum_{i=1}^n \beta^0_t E_{t,i} x_i y_j \geq \beta^0_t f_t y_j, & \forall t \in \mac{T}^j, \forall j \in M.	%\label{eq:agg6}
	\end{align*}
	
	According to the \BLP procedure, we aggregate the above inequalities to obtain
	\begin{align}
		& \sum_{i \in N} \sum_{j \in M} \Bigg( - \sum_{k \in K: k=\hat{k}, \hat{j} = 0} b^k_i + \sum_{k \in K: k=\hat{k}, j=\hat{j}} (A^k_{j,i} + b^k_i) - \sum_{k \in \mac{K}^0} \bar{\alpha}^0_k b^k_i  + \sum_{k \in \mac{K}^j}  \bar{\alpha}^j_k (A^k_{j,i} + b^k_i) \nonumber \\ 
        & \qquad \qquad {}  - \sum_{t \in \mac{T}^0} \bar{\beta}^0_t E_{t, i} + \sum_{t \in \mac{T}^j} \bar{\beta}^j_t E_{t, i} \Bigg) x_i y_j \nonumber\\
		& \quad{} + \sum_{j \in M} \Bigg( \sum_{k \in K: k=\hat{k}, \hat{j} = 0} d_k + \sum_{k \in K: k=\hat{k}, j=\hat{j}} (c^j_k - d_k) + \sum_{k \in \mac{K}^0} \bar{\alpha}^0_k d_k + \sum_{k \in \mac{K}^j} \bar{\alpha}^j_k (c^j_k - d_k) \nonumber \\
        & \qquad \qquad {}  + \sum_{t \in \mac{T}^0} \bar{\beta}^0_t f_t  - \sum_{t \in \mac{T}^j} \bar{\beta}^j_t f_t  \Bigg) y_j \nonumber\\
		& \quad{}  + \sum_{i \in N} \left( \sum_{k \in K: k=\hat{k}, \hat{j} = 0} b^k_i + \sum_{k \in \mac{K}^0} \bar{\alpha}^0_{k} b^k_i + \sum_{t \in \mac{T}^0} \bar{\beta}^0_{t} E_{t,i} \right) x_i \geq \sum_{k \in K: k=\hat{k}, \hat{j} = 0} d_k + \sum_{k \in \mac{K}^0} \bar{\alpha}^0_{k} d_k + \sum_{t \in \mac{T}^0} \bar{\beta}^0_{t} f_t. \label{eq:aggregated}
	\end{align}
	
	It follows from property (v) of Proposition~\ref{prop:ext_point_1} that the coefficient of $x_iy_j$ in the above aggregated inequality is equal to $\Phi_{i,j}$ defined therein.
	Among these terms, those with $(i,j) \in U_1$ will have coefficient zero according to the \BLP procedure.
	The remaining bilinear terms $x_i y_j$ with $(i,j) \notin U_1$ are replaced following the step (R1) in the \BLP procedure.
	In particular, for each $i \in N$, we calculate the most positive coefficient of the remaining terms $x_i y_j$ as $p_i = \max_{j \in M: (i,j) \notin U_1} \{ \Phi_{i,j} \}$.
	If $p_i < 0$, we set it equal to zero.
	Since $\Phi_{i,j} = 0$ for all $(i,j) \in U_1$, we may rewrite this equation as $p_i = \max_{j \in M} \{ (\Phi_{i,j})^+ \}$.
	This value is equal to $\bar{\gamma}^0_i$ according to the property (vii) of Proposition~\ref{prop:ext_point_1}.
	Following the step (R1) of the \BLP procedure, all these remaining bilinear terms are replaced with $p_i x_i = \bar{\gamma}^0_i$. 
	
	%\smallskip
	Similarly, the coefficient of $y_{j}$ in the aggregated inequality \eqref{eq:aggregated} is equal to $\Psi_{j}$ according to the property (vi) of Proposition~\ref{prop:ext_point_1}.
	The coefficient of variables with indices $j \in U_2$ become zero.
	The remaining variables $y_j$ with $j \notin U_2$ are replaced following the step (R2) in the \BLP procedure.
	In particular, we calculate the most positive coefficient of the remaining variables $y_j$ as $p_0 = \max_{j \notin U_2} \{ \Psi_j \}$.
	If $p_0 < 0$, we set it equal to zero.
	Since $\Psi_j = 0$ for all $j \in U_2$, we may rewrite this equation as $p_0 = \max_{j \in M} \{ (\Psi_j)^+ \}$.
	This value is equal to $\bar{\theta}_0$ according to the property (viii) of Proposition~\ref{prop:ext_point_1}.
	Following the step (R2) of the \BLP procedure, all these remaining variables are replaced with $p_0 = \bar{\theta}_0$. 
	
	%\smallskip
	Considering the above simplifications, the aggregated inequality \eqref{eq:aggregated} reduces to 
	\begin{equation*}
		\sum_{i \in N} \left( \sum_{k \in K: k=\hat{k}, \hat{j} = 0} b^k_i + \sum_{k \in \mac{K}^0} \bar{\alpha}^0_{k} b^k_i + \sum_{t \in \mac{T}^0} \bar{\beta}^0_{t} E_{t,i} + \bar{\gamma}^0_i \right) x_i \geq \sum_{k \in K: k=\hat{k}, \hat{j} = 0} d_k + \sum_{k \in \mac{K}^0} \bar{\alpha}^0_{k} d_k + \sum_{t \in \mac{T}^0} \bar{\beta}^0_{t} f_t - \bar{\theta}_0,
	\end{equation*}
which is the inequality $g(\vc{x}) \geq g_0$ produced by the extreme point $\bar{\vc{\pi}}$ of $\mac{C}_{(\hat{k}, \hat{j})}$ according to the property (xi) of Proposition~\ref{prop:ext_point_1}.

%\smallskip
We next show that the condition (C1) of the \BLP procedure is satisfied for the derived inequality.
It follows from the previous arguments that the coefficients of bilinear terms $x_iy_j$ for $(i,j) \in U_1$ and the variables $y_j$ for $j \in U_2$ become zero during the aggregation process.
According to the condition (C1), $|U_1| + |U_2|$ must be no less than $\sum_{j=0}^m \left(|\mac{K}^j|+|\mac{T}^j|\right) + \sum_{i=0}^n \left(\mathbb{I}(q_i) - q_i \right)$, where $q_i$ for $i \in N$ (resp. $q_0$) is the number of the remaining bilinear terms $x_i y_j$ (resp. variables $y_j$) with coefficient $p_i$ (resp. $p_0$) in the aggregated inequality \eqref{eq:aggregated}.
On the other hand, according to the property (xii) of Proposition~\ref{prop:ext_point_1}, $|U_1| + |U_2| \geq \sum_{j=0}^m|\mac{K}^j|+|\mac{T}^j| + \sum_{i=1}^n \mathbb{I}(\bar{\gamma}^0_i) - \sum_{i=1}^n\sum_{j=1}^m \mathbb{I}(\bar{\gamma}^0_i)(1 - \mathbb{I}(\bar{\gamma}^j_i)) + \mathbb{I}(\bar{\theta}_0) - \sum_{j=1}^m \mathbb{I}(\bar{\theta}_0)(1 - \mathbb{I}(\bar{\theta}_j))$.
Therefore, it suffices to show that (I) $\mathbb{I}(\bar{\gamma}^0_i) = \mathbb{I}(q_i)$ for $i \in N$, (II) $\mathbb{I}(\bar{\theta}_0) = q_0$, (III) $\sum_{j=1}^m \mathbb{I}(\bar{\gamma}^0_i)(1 - \mathbb{I}(\bar{\gamma}^j_i)) = q_i$ for $i \in N$, and (IV) $\sum_{j=1}^m \mathbb{I}(\bar{\theta}_0)(1 - \mathbb{I}(\bar{\theta}_j)) = q_0$.
To prove (I), we have $q_i > 0$ if and only if $p_i > 0$ by definition.
It was shown in the previous sections of this proof that $p_i = \bar{\gamma}^0_i$, which implies that $\mathbb{I}(\bar{\gamma}^0_i) = \mathbb{I}(q_i)$.
The proof of (II) follows from a similar argument.
To prove (III), for $i \in N$, it is easy to verify that $\sum_{j=1}^m \mathbb{I}(\bar{\gamma}^0_i)(1 - \mathbb{I}(\bar{\gamma}^j_i))$ counts the number of $j \in M$ such that $\mathbb{I}(\bar{\gamma}^0_i) = 1$ and $\mathbb{I}(\bar{\gamma}^j_i) = 0$.
Equivalently, we count the number of $j \in M$ such that $\bar{\gamma}^0_i > 0$ and $\bar{\gamma}^j_i = 0$.
It follows from the property (vii) of Proposition~\ref{prop:ext_point_1} that $\bar{\gamma}^j_i = \bar{\gamma}^0_i - \Phi_{i,j}$.
Therefore, $\bar{\gamma}^j_i = 0$ implies that $\bar{\gamma}^0_i = \Phi_{i,j}$.
We previously established that $\Phi_{i,j}$ is the coefficient of the bilinear term $x_i y_j$ in the aggregated inequality \eqref{eq:aggregated}.
As a result, $\bar{\gamma}^j_i = 0$ implies that the coefficient of the bilinear term $x_i y_j$ in the aggregated inequality \eqref{eq:aggregated} is equal to $\bar{\gamma}^0_i$, which is equal to $p_i$ according to the above discussion.
In summary, $\sum_{j=1}^m \mathbb{I}(\bar{\gamma}^0_i)(1 - \mathbb{I}(\bar{\gamma}^j_i))$ is equal to the number of the remaining bilinear term $x_i y_j$ in the aggregated inequality \eqref{eq:aggregated} with coefficients equal to $p_i$, which is $q_i$ according to the condition (C1) of the \BLP procedure.
The proof of (IV) follows from a similar argument.

%\smallskip
To complete the proof, it remains to show that any \BLP inequality is valid for $\proj_{\vc{x}} \conv(\mac{S})$.
The above arguments in this proof have established that the steps of the \BLP procedure correspond to the conditions (i)--(xii) of Proposition~\ref{prop:ext_point_1} for a point $\bar{\vc{\pi}}$.
It follows from the converse statement in Proposition~\ref{prop:ext_point_1} that any point $\bar{\vc{\pi}}$ that satisfies conditions (i)--(xii) is feasible to $\mac{C}_{(\hat{k}, \hat{j})}$ for some $\hat{k} \in K$ and $\hat{j} \in M \cup \{0\}$.
The converse statement in Proposition~\ref{prop:class} implies that any inequality obtained from a feasible solution to $\mac{C}_{(\hat{k}, \hat{j})}$ is valid for $\proj_{\vc{x}} \conv(\mac{S})$, proving the result. 
\end{proof}

In view of Theorem~\ref{thm:BLP}, it follows from its proof and the connections to Proposition~\ref{prop:class} that condition (C1) in the \BLP procedure is a necessary requirement for the resulting inequality to be facet-defining for $\conv(\mac{S})$. However, this condition is not required for the inequality to be valid. In other words, a linear inequality produced by the \BLP procedure that does not satisfy (C1) remains valid for $\conv(\mac{S})$, but it will not be facet-defining.

%\smallskip

We conclude this section by providing several remarks that help adjust and simply the steps of the \BLP procedure for special structures that commonly appear in various application domains.
The first remark shows that for sparse structures of the bilinear set $\mac{S}$, the \BLP procedure can be substantially simplified.
These structures appear in numerous applications, including in CCP (see Section \ref{sec:chance_bilinear}).

\begin{remark} \label{remark:simplification}
	Consider a bilinear constraint $k^* \in K$ in the description of $\mac{S}$ that contains a single $y$ variable, and is of the form $(\vc{a}\tr \vc{x})y_{j^*} + c_{j^*} y_{j^*} \geq 0$ for some $j^* \in M$.
	Following step (A1) of the \BLP procedure, if this inequality is used for the base constraint, i.e., $\hat{k} = k^*$, the only choice for $\hat{j}$ will be $\hat{j} = j^*$.
	This leads to the weighted inequality $(\vc{a}\tr \vc{x})y_{j^*} + c_{j^*} y_{j^*} \geq 0$, which means using the same bilinear constraint with constant aggregation weight $1$ as the base constraint.
	This is due to the fact that using $\hat{j} \neq j^*$ would lead to zero terms on the left-hand-side of the aggregated inequality.
	Similarly, following step (A2) of the \BLP procedure, if this inequality is used in the aggregation, the only choice will be to select it in the set $\mac{K}^{j^*}$.
	This yields the weighted inequality $\alpha^{j^*}_{k^*}(\vc{a}\tr \vc{x})y_{j^*} + \alpha^{j^*}_{k^*}c_{j^*} y_{j^*} \geq 0$, which is using the same bilinear constraint with a constant aggregation weight $\alpha^{j^*}_{k^*}$.
    We can use similar arguments for the case where a bilinear constraint $k^* \in K$ in the description of $\mac{S}$ is of the form $(\vc{a}\tr \vc{x})(\vc{1} - \vc{1} \tr \vc{y}) + c_0 (\vc{1} - \vc{1} \tr \vc{y}) \geq 0$.
    Following step (A1) of the \BLP procedure, if this inequality is selected for the base constraint, i.e., $\hat{k} = k^*$, the only choice for $\hat{j}$ will be $\hat{j} = 0$.
    This leads to using this inequality in the aggregation with a constant weight of $1$ as the base constraint.
    Similarly, following step (A2) of the \BLP procedure, if this inequality is used in the aggregation, the only choice will be to select it in the set $\mac{K}^{0}$.
	This yields using this inequality with a constant aggregation weight $\alpha^{0}_{k^*}$.
	These simplifications can significantly reduce the number of possible options for \BLP assignments, which could in turn lead to more efficient cut-generation procedures.
\end{remark}	

%\smallskip
In many applications, some of the variables $\vc{x}$ in the set $\Xi$ may have an upper bound. 
Although this bound can be treated as a regular constraint in step (A3) of the BL\&P framework, the procedure can be further streamlined to reduce the size of the \BLP assignment, as detailed in the following remark.

\begin{remark} \label{remark:upper bound}
	Assume that $\Xi$ contains the constraints of the form $-x_{i^*} \geq -1$, representing scaled upper bounds for certain variables $x_{i^*}$ for $i^* \in I^*$, where $I^* \subseteq N$.
	In step (A3) of the \BLP procedure, these constraints can be excluded from the assignment sets $\mac{T}^j$ for $j \in M$.
	Instead, we add the following step (R0) before the steps (R1) and (R2).
	\begin{itemize}
		\item[\textit{(R0)}] \textit{For each $i^* \in I^*$, select a subset $\mac{U}^{i^*}$ of the remaining bilinear terms $u_{i^*j}x_{i^*}y_j$ with positive coefficient $u_{i^*j} > 0$ for $j \in M$ in the aggregated inequality, and then replace them with $u_{i^*j}y_j$.
		Similarly, select a subset $\mac{V}^{i^*}$ of the remaining terms $v_jy_j$ with negative coefficient $v_j < 0$ for $j \in M$ in the aggregated inequality, and then replace them with $v_{j}x_{i^*}y_j$.}
		%If these replacements result in the coefficients of additional bilinear terms or $y$-variables to become zero in the aggregated inequality, record the total number of reductions as $\nu_{i^*}$.	
	\end{itemize}

	%After completing the substitution steps, the number of terms whose coefficients have become zero in condition (C1) can be updated as $\sum_{j=0}^m \left(|\mac{K}^j|+|\mac{T}^j|\right) + \sum_{i=0}^n \left(\mathbb{I}(q_i) - q_i \right) - \sum_{i \in I^*}\nu_{i}$.
\end{remark}

%\smallskip
The next remarks discuss a simplifying step in the \BLP process when $\mac{S}$ contains bilinear complementarity constraints.

\begin{remark} \label{remark: complementarity 1}
    Assume that $\mac{S}$ contains the bilinear constraint $-x_{i^*} y_{j^*} \geq 0$ for some $i^* \in N$ and $j^* \in M$.
    This constraint is imposed when there is a complementarity relationship between $x_{i^*}$ and $y_{j^*}$, since it will imply that $x_{i^*} y_{j^*} = 0$ as $x_{i^*} \geq 0$ and $y_{j^*} \geq 0$.
    Within the \BLP procedure, this bilinear constraint can be omitted from $\mac{S}$ and excluded from the aggregation process.
    Instead, in the aggregated inequality, we can substitute the remaining term of the form $u x_{i^*} y_{j^*}$ for any $u \in \Re$ with $0$ prior to applying step (R1). 
\end{remark}

\begin{remark} \label{remark: complementarity 2}
    Assume that $\Xi$ contains the constraints of the form $-x_{i^*} \geq -1$, for some $i^* \in N$. In addition,  assume that $\mac{S}$ contains the bilinear constraint $-(1-x_{i^*}) y_{j^*} \geq 0$ for some $i^* \in N$ and $j^* \in M$.
    This constraint is imposed when there is a complementarity relationship between the complement of $x_{i^*} \in [0,1]$ and $y_{j^*}$, since it will imply that $(1-x_{i^*}) y_{j^*} = 0$ as $x_{i^*} \leq 1$ and $y_{j^*} \geq 0$.
    In the \BLP procedure, we can omit this bilinear constraint from $\mac{S}$, and thereby exclude it from the aggregation process.
    Instead, in the aggregated inequality, we can perform one of the following two substitution options \textit{after} step (R0) discussed in Remark~\ref{remark:upper bound} and \textit{before} step (R1) in the \BLP procedure: 
    %{\color{red} The following case is the most general one that leads to the strongest relaxation. However, since it is more contrived, for the simplicity of presentation, we only present the weaker version here where $u_{i^*,j^*} < 0$...} (i) we may replace any remaining term of the form $u_{i^*,j^*} x_{i^*} y_{j^*}$, where $u_{i^*,j^*} < p_{i^*}$, with $(u_{i^*,j^*}-p_{i^*}) y_{j^*}$, where $p_{i^*} = \max_{j \in M}\{u_{i^*,j}, 0\}$ and $u_{i^*,j}$ denotes the coefficient of the remaining bilinear terms $x_{i^*}y_j$ for all $j \in M$;
    (i) we may replace any remaining term of the form $u_{i^*,j^*} x_{i^*} y_{j^*}$ with $u_{i^*,j^*} y_{j^*}$ when $u_{i^*,j^*} < 0$; (ii) we may replace any remaining term of the form $u_{i^*,j^*} y_{j^*}$ with $u_{i^*,j^*}x_{i^*} y_{j^*}$ when $u_{i^*,j^*} > 0$.
    %If these replacements result in the coefficient of $y_{j^*}$ in option (i) or that of the bilinear term $x_{i^*} y_{j^*}$ in option (ii) to become zero in the aggregated inequality, the required number of such terms in condition (C1) is subtracted by one.
\end{remark}

%%%%%%%%%%%%%%%%%%%%%%%%%%%%%%%%%%%%%%%%%%%%%%%%%%
%%%%%%%%%%%%%%%%%%%%%%%%%%%%%%%%%%%%%%%%%%%%%%%%
%%%%%%%%%%%%%%%%%%%%%%%%%%%%%%%%%%%%%%%%%%%%%%%%

\section{Convexification of CCP through Bilinear Extended Reformulation}
\label{sec:chance_bilinear}

%Consider a general form of a chance constraint set, given by 
%\begin{equation} \label{eq:genprob_chance}
%\mac{F}_c = \left\{ (z,\vc{x}) \in \Re_+ \times \{0,1\}^m\, \middle \vert 
%\begin{array}{l}
%\sum\limits_{i \in M} \pi_i x_i \le \varepsilon,
%\; x_i=0 \Rightarrow z\ge h_i, \quad  i \in M\\ 
%\end{array}
%\right\},    
%\end{equation}
%where $M := \{1,\dotsc,m\}$, $\varepsilon \in [0,1)$ and we assume without loss of generality that $h_1\ge h_2 \ge \cdots \ge h_m$. Moreover,  $\sum_{i \in M} \pi_i=1$ and $\pi_i \ge 0$ for $i \in M$.
%{\color{red} Let $p=\max\{k: \sum_{i=1}^{k} \pi_i \le \varepsilon\}$ with $p <m$ by definition.} 
%We also define $F_j=\sum_{i=1}^{j} \pi_j$, $j \in M$. 

In this section, we introduce a novel technique for deriving valid inequalities for the convex hull of the mixing set with a knapsack constraint, $\mac{F}_c$, as defined in \eqref{eq:genprob_chance}. Our approach begins by reformulating $\mac{F}_c$ as a bilinear set over a simplex in a higher-dimensional space. We then apply the \BLP procedure to this bilinear set to generate families of valid inequalities for $\conv(\mac{F}_c)$, covering both the general case with an arbitrary probability distribution and the special case with a uniform probability distribution.

%%%%%%%%%%%%%%%%%%%%%%%%%%%%%%%%%%%%%%%%%%%%%%%%%%%%%
\subsection{Bilinear Reformulation} \label{subsec:reformulation}

Consider the bilinear set 
\begin{equation}
\mac{S}_c = \bigl\{ (z,\vc{x};\vc{y}) \in \Xi_c \times \Delta_m \, \big \vert \eqref{eq:general_set_1} - \eqref{eq:general_set_5} \bigr\}, \label{eq:bilinear-reformulation}
\end{equation}
where
\begin{subequations}
    \begin{align}
        (1-x_i) (1- \vc{1}\tr \vc{y}) &\ge 0, &i \in M \label{eq:general_set_1}\\ 
-(1-x_i) (1- \vc{1}\tr \vc{y}) &\ge 0, &i \in M \label{eq:general_set_2}\\ 
z y_i - h_i y_i &\ge 0, &i \in M \label{eq:general_set_3}\\
- x_i y_i &\ge 0, &i \in M \label{eq:general_set_4}\\ 
-(1-x_i) y_j &\ge 0, &i,j \in M:  i<j, \label{eq:general_set_5}
    \end{align}
\end{subequations}
and where $M=\{1,\dotsc,m\}$, $\Xi_c = \left\{ (z,\vc{x}) \in \Re_+ \times \Re^m_+ \, \middle| \, - \vc{x} \geq \vc{-1}, \; -\vc{\pi}\tr \vc{x} \ge -\varepsilon \right\}$, and \linebreak $\Delta_m = \left\{ \vc{y} \in \Z_+^{m} \, \middle| \, \vc{1}\tr \vc{y} \leq 1 \right\}$.

%\smallskip
The bilinear set $\mac{S}_c$ conforms to the structure of the general set $\mac{S}$ introduced in \eqref{eq:generalset}, where the variables $(z, \vc{x})$ in $\mac{S}_c$ correspond to the $\vc{x}$ variables in $\mac{S}$. 
As a result, we can apply the \BLP procedure to obtain valid inequalities for the projection of the convex hull of $\mac{S}_c$ onto the $(z, \vc{x})$-space.
To this end, we fist need to show that Assumption~\ref{asm:cone} is satisfied.

\begin{lemma} \label{lem:chance}
    Assumption~\ref{asm:cone} holds for $\mac{S}_c$. 
\end{lemma}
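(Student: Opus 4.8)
Assumption~\ref{asm:cone} has two parts: (i) each restriction $\mac{S}_c(\vc{e}^j)$ for $j \in M \cup \{0\}$ is nonempty, and (ii) all these restrictions share a common recession cone. I would treat the two cases $j = 0$ and $j \in M$ separately, since setting $\vc{y} = \vc{e}^0 = \vc{0}$ collapses all the bilinear constraints \eqref{eq:general_set_1}--\eqref{eq:general_set_5} except \eqref{eq:general_set_1}, which becomes the trivially true $1 - x_i \ge 0$, while setting $\vc{y} = \vc{e}^j$ forces the interesting structure.

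For nonemptiness when $\hat{j} = 0$, I would simply exhibit the point $\vc{x} = \vc{0}$, $z = h_1$ (the largest right-hand side), $\vc{y} = \vc{0}$: the constraint $-\vc{\pi}\tr\vc{x} \ge -\varepsilon$ holds since $\vc{\pi}\tr\vc{0} = 0 \le \varepsilon$, the bounds $-\vc{x} \ge -\vc{1}$ hold, and all of \eqref{eq:general_set_1}--\eqref{eq:general_set_5} reduce to $0 \ge 0$ or $1 \ge 0$. For $\hat{j} \in M$, I would exhibit a point with $y_j = 1$, all other $y$-components zero: then \eqref{eq:general_set_3} forces $z \ge h_j$, \eqref{eq:general_set_4} forces $x_j = 0$, and \eqref{eq:general_set_5} forces $x_i = 1$ for all $i < j$ (note \eqref{eq:general_set_5} only constrains pairs $i < j$). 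Constraints \eqref{eq:general_set_1}--\eqref{eq:general_set_2} with $\vc{1}\tr\vc{y} = 1$ become $0 \ge 0$, hence vacuous. So the candidate point is $x_i = 1$ for $i < j$, $x_j = 0$, $x_i \in \{0,1\}$ free for $i > j$ (take $0$), $z = h_j$. The only nontrivial check is the knapsack constraint $\vc{\pi}\tr\vc{x} = \sum_{i<j}\pi_i \le \varepsilon$; this need not hold for arbitrary $j$, so here I must use the reduction built into $\mac{F}_c$. The correct fix is to observe that the bilinear reformulation $\mac{S}_c$ is designed so that $\proj_{(z,\vc{x})}\conv(\mac{S}_c)$ recovers $\conv(\mac{F}_c)$, and $\mac{S}_c(\vc{e}^j)$ is nonempty precisely because the point above with $z$ chosen large enough and $x_i = 1$ for $i < j$ need \emph{not} satisfy the knapsack bound inside $\mac{S}_c(\vc{e}^j)$ — wait, it does need to, since $\Xi_c$ carries $-\vc{\pi}\tr\vc{x} \ge -\varepsilon$. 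Thus I would instead check that for the ordering $h_1 \ge \cdots \ge h_m$ and with $p = \max\{k : \sum_{i=1}^k \pi_i \le \varepsilon\}$, one has $\sum_{i<j}\pi_i \le \varepsilon$ whenever $j \le p+1$; for $j > p+1$ one takes a different feasible assignment of the free variables $x_i$, $i > j$, or more carefully chooses which of $x_1,\dots,x_{j-1}$ to set to $1$ — but \eqref{eq:general_set_5} rigidly forces all of them to $1$. The resolution is that $\mac{S}_c(\vc{e}^j) = \emptyset$ is actually \emph{acceptable to avoid} only if we interpret \eqref{eq:general_set_5} together with the knapsack; I would argue that the construction is valid for all $j$ because $\pi_i \le \varepsilon$ and the specific ordering ensure $\sum_{i=1}^{j-1}\pi_i \le \varepsilon$ fails only when... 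Let me restate cleanly: I expect the paper assumes (perhaps implicitly, or via a preprocessing step analogous to the one removing scenarios with $\pi_i > \varepsilon$) that $\sum_{i=1}^{m-1}\pi_i \le \varepsilon$ is \emph{not} required; instead, for large $j$ the feasible point uses the freedom that only scenarios $i < j$ are forced, so I would re-examine whether $\mac{S}_c$ as written truly needs $j$ to range over all of $M$, or whether the effective support is $j \le \vartheta$ or similar. The safe move in the writeup: verify nonemptiness for $j \in \{0\}\cup\{1,\dots,m\}$ by taking $x_i = 1$ for $i<j$ and \emph{checking} $\sum_{i<j}\pi_i \le \varepsilon$ holds — and if the paper's setup does not guarantee this for all $j$, note that scenarios for which it fails contribute only redundant restrictions and can be dropped, exactly mirroring the $\pi_i > \varepsilon$ reduction in Section~\ref{sec:background}.

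For part (ii), the recession cone of $\mac{S}_c(\vc{e}^j)$ is obtained by homogenizing: set right-hand sides to zero and drop the $\vc{y}$-equality. The recession directions $(\Delta z, \Delta\vc{x})$ must satisfy $\Delta z \ge 0$ (from homogenized \eqref{eq:general_set_3} when $y_j = 1$, and vacuously otherwise), $\Delta\vc{x} \le \vc{0}$ (from homogenized $-\vc{x} \ge \vc{0}$... actually $\vc{x} \ge \vc{0}$ gives $\Delta\vc{x}\ge\vc0$, while $-\vc{x}\ge-\vc1$ homogenizes to $-\Delta\vc{x}\ge0$, forcing $\Delta\vc{x} = \vc{0}$), and $-\vc{\pi}\tr\Delta\vc{x} \ge 0$. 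So in every case the recession cone is $\{(\Delta z, \vc{0}) : \Delta z \ge 0\}$, independent of $j$ — the bilinear constraints contribute nothing new in the recession cone because each, when homogenized, either becomes $0 \ge 0$ or is implied by $\Delta\vc{x} = \vc{0}$ together with $\Delta z \ge 0$. I would write this out termwise for \eqref{eq:general_set_1}--\eqref{eq:general_set_5}: e.g. the homogenization of $zy_i - h_i y_i \ge 0$ at $\vc{y}=\vc{e}^j$ is $\Delta z \cdot [i=j] \ge 0$, which is $\Delta z \ge 0$ or vacuous; \eqref{eq:general_set_4},\eqref{eq:general_set_5} homogenize to constraints only on $\Delta\vc{x}$, already pinned to zero. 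The main obstacle is purely the nonemptiness bookkeeping in part (i) for indices $j$ where $\sum_{i<j}\pi_i$ might exceed $\varepsilon$; everything in part (ii) is a routine homogenization check. I would therefore devote most of the writeup to carefully justifying the feasible point for each disjunct $\mac{S}_c(\vc{e}^j)$, invoking the ordering $h_1 \ge \dots \ge h_m$, the bound $\pi_i \le \varepsilon$, and the same redundancy argument used elsewhere in the paper to handle any problematic $j$.
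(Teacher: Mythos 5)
Your proposal does not close the nonemptiness part of the assumption, and it contains one outright error. For $\hat{j}=0$ you claim that setting $\vc{y}=\vc{0}$ leaves only the trivially true $1-x_i\ge 0$ from \eqref{eq:general_set_1}, and you take the witness $\vc{x}=\vc{0}$, $z=h_1$. But \eqref{eq:general_set_2} does not vanish at $\vc{y}=\vc{0}$: it reads $-(1-x_i)\ge 0$, which together with $x_i\le 1$ (from $\Xi_c$) forces $x_i=1$ for every $i\in M$. Your candidate point $(z,\vc{x};\vc{y})=(h_1,\vc{0};\vc{0})$ therefore violates \eqref{eq:general_set_2}; the only admissible $\vc{x}$ in the disjunct $\mac{S}_c(\vc{e}^0)$ is $\vc{x}=\vc{1}$, and the paper's witness is $(0,\vc{1};\vc{0})$. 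For $j\in M$ you correctly read off the forced pattern ($z\ge h_j$ from \eqref{eq:general_set_3}, $x_j=0$ from \eqref{eq:general_set_4}, $x_i=1$ for $i<j$ from \eqref{eq:general_set_5}), but the writeup then dissolves into an unresolved back-and-forth about whether the knapsack constraint $\vc{\pi}\tr\vc{x}\le\varepsilon$ can be satisfied, and no witness is ever produced. A proof has to commit: the paper simply exhibits $(h_j,\vc{1}-\vc{e}^j;\vc{e}^j)$ for each $j\in M$. (Your worry is not frivolous --- the forced assignment $x_i=1$ for $i<j$ gives $\vc{\pi}\tr\vc{x}\ge\sum_{i<j}\pi_i$, which exceeds $\varepsilon$ once $j>p+1$, and the paper's own witnesses face the same tension with the knapsack row of $\Xi_c$ --- but flagging a difficulty and then punting to ``drop the problematic scenarios'' is not a proof of the lemma as stated.)

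The recession-cone part of your argument is essentially right and matches what the paper asserts without detail: homogenizing $0\le x_i\le 1$ pins $\Delta\vc{x}=\vc{0}$, the $\vc{y}$-block is fixed in each disjunct, and the only surviving direction is $\Delta z\ge 0$, so all disjuncts share the recession cone $\{(z,\vc{0};\vc{0}) : z\ge 0\}$. One small slip: you should not ``drop the $\vc{y}$-equality'' when homogenizing --- it contributes $\Delta\vc{y}=\vc{0}$ --- though this does not affect your conclusion.
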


\begin{proof}
Let $\mac{S}_c(\vc{e}^j)$ be the restriction of $\mac{S}_c$ at point $\vc{y} = \vc{e}^j$ for $j \in M \cup \{0\}$.
For part (i) of Assumption~\ref{asm:cone}, we show that $\mac{S}_c(\vc{e}^j) \neq 0$ for all $j \in M \cup \{0\}$.
Consider $j = 0$. 
Then, the point $(\bar{z},\bar{\vc{x}};\bar{\vc{y}}) = (0, \vc{1}; \vc{0})$ is feasible to $\mac{S}_c(\vc{e}^0)$.
Consider $j \in M$.
Then, the point $(\bar{z},\bar{\vc{x}};\bar{\vc{y}}) = (h_j, \vc{1} - \vc{e}^j; \vc{e}^j)$ is feasible to $\mac{S}_c(\vc{e}^j)$.
For part (ii) of Assumption~\ref{asm:cone}, it is easy to verify that all sets $\mac{S}_c(\vc{e}^j)$ for $j \in M \cup \{0\}$ share the same recession cone described as $\{(z,\vc{x};\vc{y}) \, | \, z \geq 0, \vc{x} = \vc{0}, \vc{y} = \vc{0} \}$.
\end{proof}

The next proposition shows the relationship between the convex hulls of $\mac{F}_c$ and $\mac{S}_c$.

\begin{proposition} \label{prop:genprob_chance_reform}
It holds that $\mac{F}_c \subseteq \proj_{(z,\vc{x})} \mac{S}_c$.     
\end{proposition}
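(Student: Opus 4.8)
To prove $\mac{F}_c \subseteq \proj_{(z,\vc{x})} \mac{S}_c$, I would take an arbitrary point $(\bar z, \bar{\vc{x}}) \in \mac{F}_c$ and exhibit a $\bar{\vc{y}}$ such that $(\bar z, \bar{\vc{x}}; \bar{\vc{y}}) \in \mac{S}_c$. Since $\mac{S}_c$ is a union of the restrictions $\mac{S}_c(\vc{e}^j)$ over $j \in M \cup \{0\}$, it suffices to pick the right index $j$ for each feasible $(\bar z, \bar{\vc{x}})$ and then verify all five families of bilinear constraints \eqref{eq:general_set_1}--\eqref{eq:general_set_5} together with membership in $\Xi_c \times \Delta_m$. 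The natural choice of index is governed by which binary variables $\bar x_i$ equal $1$: recall that in $\mac{F}_c$ the implication $x_i = 0 \Rightarrow z \ge h_i$ is active, and the $h_i$ are ordered $h_1 \ge \dots \ge h_m$, so the binding scenario is the smallest-indexed $i$ with $\bar x_i = 0$.

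\textbf{Key steps.}  First, given $(\bar z, \bar{\vc{x}}) \in \mac{F}_c$, I would set $\bar j := \min\{ i \in M : \bar x_i = 0\}$ if such an $i$ exists, and $\bar j := 0$ otherwise (the latter case forces $\bar{\vc{x}} = \vc{1}$, which would violate $\vc{\pi}\tr\vc{x} \le \varepsilon$ unless $\varepsilon \ge \vc{\pi}\tr\vc{1} = 1$; under the standing assumption $\pi_i \le \varepsilon$ this degenerate case can be handled trivially). Put $\bar{\vc{y}} := \vc{e}^{\bar j}$, which lies in $\Delta_m$. Second, I would verify each constraint family: \eqref{eq:general_set_1}--\eqref{eq:general_set_2} together say $(1 - \bar x_i)(1 - \vc{1}\tr\bar{\vc{y}}) = 0$, which holds because $\vc{1}\tr\bar{\vc{y}} = 1$ when $\bar j \in M$; \eqref{eq:general_set_3} reads $(\bar z - h_i)\bar y_i \ge 0$, nontrivial only for $i = \bar j$, where it becomes $\bar z \ge h_{\bar j}$ — this is exactly the binding mixing implication, since $\bar x_{\bar j} = 0$; \eqref{eq:general_set_4} reads $-\bar x_i \bar y_i \ge 0$, nontrivial only for $i = \bar j$, where $\bar x_{\bar j} = 0$ makes it hold with equality; \eqref{eq:general_set_5} reads $-(1 - \bar x_i)\bar y_j \ge 0$ for $i < j$, nontrivial only for $j = \bar j$, requiring $\bar x_i = 1$ for all $i < \bar j$, which is precisely the definition of $\bar j$ as the \emph{smallest} index with $\bar x_i = 0$. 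Third, membership in $\Xi_c$ is immediate since $\bar{\vc{x}} \in \{0,1\}^m \subseteq [0,1]^m$, $\bar z \ge 0$, and $\vc{\pi}\tr\bar{\vc{x}} \le \varepsilon$ all come directly from $(\bar z,\bar{\vc{x}}) \in \mac{F}_c$.

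\textbf{Main obstacle.}  I do not anticipate a deep difficulty here — the proof is essentially a bookkeeping check that the chosen witness $\bar{\vc{y}} = \vc{e}^{\bar j}$ satisfies every constraint. The one place requiring a little care is the constraint family \eqref{eq:general_set_5}: one must confirm that choosing $\bar j$ to be the \emph{smallest} (not just any) index with $\bar x_{\bar j} = 0$ is exactly what makes those inequalities hold, and to handle the edge case $\bar{\vc{x}} = \vc{1}$ (equivalently $\bar j = 0$) consistently with the standing assumptions on $\varepsilon$ and $\vc{\pi}$. The other subtlety worth a sentence is noting that the reverse inclusion need not hold — $\proj_{(z,\vc{x})}\mac{S}_c$ also contains fractional $\vc{x}$ — which is why the statement is an inclusion rather than an equality, and this is consistent with the later use of $\conv(\mac{S}_c)$ to generate valid inequalities for $\conv(\mac{F}_c)$.
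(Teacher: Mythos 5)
Your proposal is correct and follows essentially the same route as the paper's proof: choose $\bar{\vc{y}} = \vc{e}^{\bar j}$ with $\bar j$ the smallest index at which $\bar x_{\bar j}=0$ (or $\bar{\vc{y}}=\vc{0}$ when $\bar{\vc{x}}=\vc{1}$) and check the constraints, with the minimality of $\bar j$ being exactly what makes \eqref{eq:general_set_5} hold. Your verification is in fact more explicit than the paper's, which leaves the constraint check as ``easy to verify.''
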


\begin{proof}
Consider a point $(\bar{z},\bar{\vc{x}}) \in \mac{F}_c$. 
We show that there exists $\bar{\vc{y}}$ such that $(\bar{z},\bar{\vc{x}};\bar{\vc{y}}) \in \mac{S}_c$. 
There are two cases.
For the first case, assume that $\bar{x}_i = 1$ for all $i \in M$.
As a result, \eqref{eq:genprob_chance} implies that $\bar{z} \geq 0$.
Let $\bar{y}_j = 0$ for all $j \in M$.
It is easy to verify that the point $(\bar{z},\bar{\vc{x}};\bar{\vc{y}})$ satisfies all constraints in $\mac{S}_c$, and thus $(\bar{z},\bar{\vc{x}};\bar{\vc{y}}) \in \mac{S}_c$.
For the second case, assume that $\bar{x}_i = 0$ for some $i \in M$.
Let $i^* \in M$ be the smallest index of the variables with $\bar{x}_i=0$. 
As a result, \eqref{eq:genprob_chance} implies that $\bar{z} \geq h_{i^*}$.
Let $\bar{y}_{i^*}=1$ and $\bar{y}_j=0$ for $j \in M \setminus \{i^*\}$. 
It is easy to verify that the point $(\bar{z},\bar{\vc{x}};\bar{\vc{y}})$ satisfies all constraints in $\mac{S}_c$, and thus $(\bar{z},\bar{\vc{x}};\bar{\vc{y}}) \in \mac{S}$. 
\end{proof}

%\smallskip
Lemma~\ref{lem:chance} and Proposition~\ref{prop:genprob_chance_reform} imply that the \BLP method can be applied to the bilinear set $\mac{S}_c$ to obtain valid inequalities for the convex hull of the original set 
$\mac{F}_c$.
Although the results presented in this section apply to the general form of $\mac{F}_c$---where the knapsack constraint involves an arbitrary probability distribution $\vc{\pi}$---many existing studies on deriving explicit valid inequalities focus on the special case of a uniform distribution, as defined by $\mac{F}_c^=$ in \eqref{eq:cardinality}; see, for instance, \cite{kuccukyavuz2012mixing,luedtke2010integer}.
In this special case, where the chance constraint reduces to a cardinality constraint, the relationship between $\mac{F}_c$ and $\mac{S}_c$, as described in Proposition~\ref{prop:genprob_chance_reform}, becomes stronger.
In fact, we can show that under this setting, both sets share the same convex hull in the $(z, \vc{x})$-space.
%This result, presented next, demonstrates that applying the \BLP procedure to $\bar{\bar{\mac{S}}}_c$ yields the convex hull of $\bar{\bar{\mac{F}}}_c$. 
%This key property underscores the generality and strength of our approach, offering broader applicability than those achieved by existing methods in the literature.

\begin{proposition} \label{prop:uniform_chance_reform}
%Let $\mac{F}_c^=$ and $\mac{S}_c^=$ be the special cases of $\mac{F}_c$ and $\mac{S}_c$, respectively, where the chance constraint $-\vc{\pi}\tr \vc{x} \ge -\varepsilon$ in their description is replaced with the cardinality constraint $-\vc{1}\tr \vc{x} \ge -p$ for some $p \in M$. 
It holds that 
%Then, 
$\conv(\mac{F}_c^=) = \proj_{(z,\vc{x})} \conv(\mac{S}_c^=)$.  
\end{proposition}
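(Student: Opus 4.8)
The plan is to prove the two inclusions $\conv(\mac{F}_c^=) \subseteq \proj_{(z,\vc{x})} \conv(\mac{S}_c^=)$ and $\proj_{(z,\vc{x})} \conv(\mac{S}_c^=) \subseteq \conv(\mac{F}_c^=)$ separately. The first inclusion is nearly immediate: by Proposition~\ref{prop:genprob_chance_reform} (specialized to the uniform case), $\mac{F}_c^= \subseteq \proj_{(z,\vc{x})} \mac{S}_c^=$, and since $\proj_{(z,\vc{x})} \conv(\mac{S}_c^=)$ is a convex set containing $\mac{F}_c^=$, it must contain $\conv(\mac{F}_c^=)$. So the real content is the reverse inclusion.

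For the reverse inclusion, first I would reduce to showing $\proj_{(z,\vc{x})} \mac{S}_c^= \subseteq \conv(\mac{F}_c^=)$ at the level of the integral sets—no, more carefully: I would show that every extreme point of $\conv(\mac{S}_c^=)$ projects into $\conv(\mac{F}_c^=)$, and that the recession cone of $\conv(\mac{S}_c^=)$ projects into the recession cone of $\conv(\mac{F}_c^=)$. Since $\mac{S}_c^= = \bigcup_{j=0}^m \mac{S}_c^=(\vc{e}^j)$ is a finite union of polyhedra with a common recession cone (Lemma~\ref{lem:chance}), $\conv(\mac{S}_c^=)$ is a polyhedron whose vertices are vertices of the individual $\mac{S}_c^=(\vc{e}^j)$, so it suffices to examine a vertex $(\bar z, \bar{\vc{x}}; \vc{e}^j)$ of some $\mac{S}_c^=(\vc{e}^j)$ and argue $(\bar z, \bar{\vc{x}}) \in \conv(\mac{F}_c^=)$. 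For $j=0$, constraints \eqref{eq:general_set_1}--\eqref{eq:general_set_2} force $x_i = 1$ for all $i$, and then $\bar z \ge 0$ suffices for $(\bar z, \vc{1}) \in \mac{F}_c^=$. For $j \in M$, constraint \eqref{eq:general_set_4} forces $\bar x_j = 0$, constraint \eqref{eq:general_set_5} forces $\bar x_i = 1$ for all $i < j$, constraint \eqref{eq:general_set_3} forces $\bar z \ge h_j$, and the knapsack (cardinality) constraint $\vc{1}\tr \bar{\vc{x}} \le p$ together with $\bar x_i \in [0,1]$ leaves the coordinates $\bar x_i$ for $i > j$ free in $[0,1]$ subject only to $\sum_{i>j} \bar x_i \le p - (j-1)$. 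The key claim is that the polytope $\{\vc{x} \in [0,1]^m : x_i = 1 \ (i<j), x_j = 0, \sum_{i>j} x_i \le p-(j-1)\}$, glued with any fixed $z = \bar z \ge h_j$, lies in $\conv(\mac{F}_c^=)$; equivalently, each of its vertices—which are 0/1 points with $x_i = 1$ for $i < j$, $x_j = 0$, and at most $p - (j-1)$ of the later coordinates equal to $1$—gives a point $(\bar z, \vc{x})$ with at most $p$ ones and with $x_j = 0$ for the smallest-index zero being $\le j$, hence $\bar z \ge h_j \ge h_{i^*}$ where $i^*$ is that smallest zero-index, so $(\bar z, \vc{x}) \in \mac{F}_c^=$.

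The main obstacle is handling the continuous freedom in the $z$-coordinate correctly: a vertex of $\mac{S}_c^=(\vc{e}^j)$ has $\bar z$ pinned (e.g.\ to $h_j$ or to $0$), but after taking the convex hull over all $j$, a general point of $\proj_{(z,\vc{x})} \conv(\mac{S}_c^=)$ is a convex combination with varying $z$-values, and I must confirm the mixing inequalities of $\conv(\mac{F}_c^=)$ (Propositions~\ref{prop:chance}--\ref{prop:chance_lift_simge}) are not violated. The cleanest route is to avoid enumerating $\conv(\mac{F}_c^=)$'s facets and instead argue directly: take any $(\bar z, \bar{\vc{x}}) \in \proj_{(z,\vc{x})} \conv(\mac{S}_c^=)$, lift it to $(\bar z, \bar{\vc{x}}; \bar{\vc{y}}) \in \conv(\mac{S}_c^=)$, write this as a convex combination $\sum_t \lambda_t (z_t, \vc{x}_t; \vc{y}_t)$ of points in the $\mac{S}_c^=(\vc{e}^{j_t})$, and for each term produce a point of $\mac{F}_c^=$ with the \emph{same} $(z_t, \vc{x}_t)$ whenever $(z_t, \vc{x}_t)$ is itself a vertex-type point—but since $\mac{S}_c^=(\vc{e}^{j})$ points need not be integral in $\vc{x}$, I instead decompose \emph{within} each $\mac{S}_c^=(\vc{e}^{j})$ into its own vertices and recession directions (finitely many), apply the vertex analysis above to each, and observe the recession direction $z \ge 0, \vc{x} = \vc{0}$ is also recessive for $\conv(\mac{F}_c^=)$. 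Assembling these gives $(\bar z, \bar{\vc{x}})$ as a convex combination of points in $\conv(\mac{F}_c^=)$ plus a recession direction of it, hence in $\conv(\mac{F}_c^=)$. A subtle point worth checking carefully is that when $j - 1 > p$ the set $\mac{S}_c^=(\vc{e}^j)$ is actually empty (the cardinality constraint cannot be met), so only $j \le p+1$ contribute, which is exactly why the index range $\{1,\dots,p\}$ appears in the classical inequalities and why the reformulation is tight in the uniform case.
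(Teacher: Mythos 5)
Your proposal is correct and follows essentially the same route as the paper: the forward inclusion via Proposition~\ref{prop:genprob_chance_reform} plus convexity, and the reverse inclusion by decomposing a point of $\conv(\mac{S}_c^=)$ into pieces with $\vc{y}$ fixed to $\vc{0}$ or a unit vector, reading off the forced values $x_i=1$ ($i<j^*$), $x_{j^*}=0$, $z\ge h_{j^*}$ from constraints \eqref{eq:general_set_3}--\eqref{eq:general_set_5}, and invoking the integrality of the unit cube intersected with a cardinality constraint to land in $\conv(\mac{F}_c^=)$. The paper organizes the decomposition slightly differently (first into points of $\mac{S}_c^=$ with possibly fractional $\vc{x}$, then into integral extreme points of $\Xi_c$, rather than directly into vertices and recession directions of the disjuncts), but the key facts used are identical.
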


\begin{proof}
Since $\mac{F}_c^=$ and $\mac{S}_c^=$ are special case $\mac{F}_c$ and $\mac{S}_c$, where $\pi_i = 1/m$ for all $i \in M$ and $p= \lfloor m \varepsilon \rfloor$.
Therefore, it follows from Proposition~\ref{prop:genprob_chance_reform} that $\mac{F}_c^= \subseteq \proj_{(z,\vc{x})} \mac{S}_c^=$.
Taking the convex hull from both sides and using the commutative property of the convex hull and projection operators, we obtain $\conv(\mac{F}_c^=) \subseteq \proj_{(z,\vc{x})} \conv(\mac{S}_c^=)$.

Next, we show the reverse inclusion $\conv(\mac{F}_c^=) \supseteq \proj_{(z,\vc{x})} \conv(\mac{S}_c^=)$.
Consider a point $(\bar{z},\bar{\vc{x}}) \in \proj_{(z,\vc{x})} \conv(\mac{S}_c^=)$. 
There exists $\bar{\vc{y}} \in \Re^m$ such that $(\bar{z},\bar{\vc{x}}; \bar{\vc{y}}) \in \conv(\mac{S}_c^=)$.
As a result, there exists a collection of points $(\hat{z}^k,\hat{\vc{x}}^k; \hat{\vc{y}}^k) \in \mac{S}_c^=$ for some $k \in \{1, \dotsc, \kappa\}$ such that $(\bar{z},\bar{\vc{x}}; \bar{\vc{y}}) = \sum_{k=1}^{\kappa} \lambda_k (\hat{z}^k,\hat{\vc{x}}^k; \hat{\vc{y}}^k)$ with $\sum_{k=1}^{\kappa} \lambda_k = 1$ and $\lambda_k \geq 0$ for all $k \in \{1,\dotsc,\kappa\}$.
We next show that $(\hat{z}^k,\hat{\vc{x}}^k) \in \conv(\mac{F}_c^=)$ for all $k \in \{1,\dotsc,\kappa\}$.
Consider $(\hat{z}^k,\hat{\vc{x}}^k; \hat{\vc{y}}^k)$ for some $k \in \{1,\dotsc,\kappa\}$.
There are two cases.
For the first case, assume that $\hat{y}_j^k = 0$ for all $j \in M$.
It follows from \eqref{eq:general_set_1} and \eqref{eq:general_set_2} that $\hat{x}_i^k = 1$ for all $i \in M$.
Further, \eqref{eq:general_set_3} implies that $\hat{z}^k \geq 0$.
Hence, $(\hat{z}^k,\hat{\vc{x}}^k) \in \mac{F}_c^= \subseteq \conv(\mac{F}_c^=)$.
For the second case, assume that $\hat{y}_{j^*}^k = 1$ for some $j^* \in M$ and $\hat{y}_{j}^k = 0$ all $j \in M \setminus \{j^*\}$.
It follows from \eqref{eq:general_set_3} that $\hat{z}^k \geq h_{j^*}$.
Furthermore, \eqref{eq:general_set_4} implies that $\hat{x}_{j^*}^k = 0$, while \eqref{eq:general_set_5} implies that $\hat{x}_{i}^k = 1$ for all $i < j^*$.
Since the set $\Xi_c$ in the description of $\mac{S}_c^=$ is a unit cube intersected with a cardinality constraint, all of its extreme points are integral.
Therefore, there exists a collection of integral points $\tilde{\vc{x}}^l$ for $l \in \{1, \dotsc, \ell\}$ of $\Xi_c$ such that $\hat{\vc{x}}^k = \sum_{l=1}^{\ell} \mu_l \tilde{\vc{x}}^l$ with $\sum_{l=1}^{\ell} \mu_l = 1$ and $\mu_l > 0$ for all $l \in \{1,\dotsc,\ell\}$. 
According to the previous argument, since $\hat{x}_{i}^k \in \{0,1\}$ for all $i \leq j^*$, we must have $\tilde{x}^l_i = \hat{x}_{i}^k$ for all $i \leq j^*$ and all $l \in \{1,\dotsc,\ell\}$.
In particular, for each $l \in \{1,\dotsc,\ell\}$, we have $\tilde{x}^l_{j^*} = 0$ and $\tilde{x}^l_{i} = 1$ for $i < j^*$, which implies that the point $(\hat{z}^k,\tilde{\vc{x}}^l)$ is feasible to $\mac{F}_c^=$.
Hence, we can write that $(\hat{z}^k,\hat{\vc{x}}^k) = \sum_{l=1}^{\ell} \mu_l (\hat{z}^k,\tilde{\vc{x}}^l)$, which shows that $(\hat{z}^k,\hat{\vc{x}}^k) \in \conv(\mac{F}_c^=)$.
In sum, we have shown that $(\bar{z},\bar{\vc{x}})$ is represented as a convex combination of points $(\hat{z}^k,\hat{\vc{x}}^k) \in \conv(\mac{F}_c^=)$ for all $k \in \{1,\dotsc,\kappa\}$, proving that $(\bar{z},\bar{\vc{x}}) \in \conv(\mac{F}_c^=)$. 
\end{proof}

%%%%%%%%%%%%%%%%%%%%%%%%%%%%%%%%%%%%%%%%%%%%%%%%%%%%%
\subsection{Valid Inequalities for the Mixing Set with a Knapsack Constraint} \label{subsec:BLP family-general}

In this section, we demonstrate how the \BLP procedure yields the explicit form of an important family of valid inequalities for $\conv(\mac{F}_c)$, which we refer to as \textit{BL\&P inequalities}. This family subsumes several known results from the literature while also introducing a broad class of new inequalities with distinct structural forms that lie beyond the scope of existing results.

\begin{theorem}\label{thm:chance_lift_generic}
%Consider $\mac{F}_c$ and $\mac{S}_c$ as defined in \eqref{eq:genprob_chance} and \eqref{eq:bilinear-reformulation}, respectively. 
Let $r \in \{1,\dotsc,p\}$, $l \in \{1,\dotsc,r\}$, and define $L:=\{1,\dotsc, l\}$.  
Select  $\delta_{t_\iota} \in \Re$ for  $\iota \in L$ such that $\delta_{t_\iota} \ge h_{t_{\iota+1}} - h_{t_{\iota}}$  and $\sum_{\iota \in L} \delta_{t_{\iota}} \le  h_{r+1}$, where  $t_{l+1}:=r+1$. 
Consider %\textcolor{red}{$v \in \{1,\dotsc, p-r+ \sum_{\iota \in L} \mathbb{I}(\delta_{t_\iota}) \}$}, 
$v \in \{1,\dotsc, p-r+ l \}$, 
and define 
$V :=\{1,\dotsc, v\}$.
Let $P:=\{t_\iota: \iota \in L\} \subseteq \{1,\dotsc, r\}$ and $Q:=\{q_\iota: \iota \in V\} \subseteq \{r + 1, \dotsc, m\}$, where  $t_1 < \cdots < t_l$ and $q_1 < \cdots <q_{v}$, and define $a_j:=\bigl|\{\iota \in L: t_\iota < j  \}\bigr|$. %  and $b_j:=\bigl|\{\iota \in V: q_\iota < j  \}\bigr|$ for $j \in M$. 
%\textcolor{red}{I don't see any reason why $Q$ can't be a subset, with the proof written as is without counting the number of relaxed terms. Also, there is not much insight in the proof why $Q$ can't include $r+1$. This probably can't be the case given the number of relaxed terms..}
Moreover, select $A_j \subseteq V$ for $j \in  M$.
%Suppose that for some $(\phi_{q_1}, \dotsc, \phi_{p-r}) \in \Re^{p-r}_+$ and $\beta^j \in \Re_+$, $j \in M \setminus P$, we have 
%where $A_{j}^{+}:=\{\iota \in V: \phi_{q_{\iota}} - \beta^{j} \ge 0\}$ and $A_{j}^{-}= V \setminus A_{j}^{+}$, $j \in M \setminus P$.
%Then, the inequalities 
%\begin{equation} \label{eq:chance_lift_ineq_generic}
%    z + \sum_{\iota=1}^{l} (h_{t_{\iota}} - h_{t_{\iota+1}}) x_{t_{\iota}} + \sum_{\iota=1}^{p-r} \phi_{q_{\iota}} (1-x_{q_{\iota}}) \ge h_{t_{1}}, 
%\end{equation}
%with $h_{t_{l+1}}:=h_{r+1}$,   
%are valid for $\conv(\mac{F}_c)$, as defined in \eqref{eq:genprob_chance}. 

Suppose that for some $(\phi_{q_1}, \dotsc, \phi_{q_{v}}) \in \Re^{v}_+$ there exists $\beta_j \in \Re_+$ for each $j \in M$ that satisfies  
\begin{subequations}
\label{A}
\begin{align}
    & \max\Big\{ \frac{\phi_{q_\iota}}{m  \pi_{q_\iota}}: \iota \in A_j, \; q_{\iota} > j  \Big\} \le \beta_j  \le \min\Big\{\frac{\phi_{q_\iota}}{m \pi_{q_\iota}}: \iota \in V \setminus A_j, \;  q_{\iota} > j \Big\}, \label{A_1}\\ 
    %& \beta^{q_j}   \Big ( F_{q_j} - \pi_{q_j} - \varepsilon + \sum_{\iota \in V, q_i > q_j} \pi_{q_\iota}  - \sum_{\iota \in A_{q_j}^-,   q_{\iota}>q_j} \pi_{q_\iota} \Big) \ge h_{t_{l+1}} - h_{q_j} -   \phi_{q_j} - \sum\limits_{\iota \in A_{q_j}^-,  q_{\iota}>q_j} \phi_{q_{\iota}}, \quad j \in V, \label{A_2}\\ 
    & \beta_{j}   \Big ( F_{j}  - \pi_j - \varepsilon + \sum_{\substack{\iota \in V \\ q_{\iota} > j}} \pi_{q_\iota}  - \sum_{\substack{\iota \in A_{j} \\   q_{\iota}>j}} \pi_{q_\iota} \Big) \ge \Big(  h_{t_{a_j+1}} - h_{j} - \sum_{\substack{\iota \in L \\ t_\iota <j}} \delta_{t_\iota}  - \mathbb{I}(j \in Q)\phi_j  -\sum\limits_{\substack{\iota \in A_{j}\\  q_{\iota} > j}} \phi_{q_{\iota}}\Big)\big/m, \label{A_3}  
\end{align}   
\end{subequations}
where  
$F_j:=\sum_{i=1}^{j} \pi_j$. 
Then, the \BLP inequality 
\begin{equation} \label{eq:chance_lift_ineq_generic}
    z + \sum_{\iota=1}^{l} (h_{t_{\iota}} - h_{t_{\iota+1}} + \delta_{t_{\iota}}) x_{t_{\iota}} + \sum_{\iota=1}^{v} \phi_{q_{\iota}} (1-x_{q_{\iota}}) \ge h_{t_{1}}, 
\end{equation}   
is valid for $\proj_{(z,\vc{x})} \conv(\mac{S}_c)$; hence, is valid for $\conv(\mac{F}_c)$.

\end{theorem}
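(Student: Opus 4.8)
The plan is to read \eqref{eq:chance_lift_ineq_generic} as an inequality certified by the \BLP procedure applied to $\mac{S}_c$, but to carry out the verification in the most economical way: since \eqref{eq:chance_lift_ineq_generic} involves only $(z,\vc{x})$ and $\mac{S}_c=\bigcup_{j=0}^m\mac{S}_c(\vc{e}^j)$ with $\conv(\mac{S}_c)=\conv\!\big(\bigcup_j\mac{S}_c(\vc{e}^j)\big)$, the inequality is valid for $\proj_{(z,\vc{x})}\conv(\mac{S}_c)$ if and only if it holds on each nonempty restriction $\proj_{(z,\vc{x})}\mac{S}_c(\vc{e}^j)$. This per-disjunct check is exactly what the \BLP weights of Section~\ref{sec:BLP} encode (the multipliers $\beta_j$ of the theorem being the weights on the knapsack row $-\vc{\pi}\tr\vc{x}\ge-\varepsilon$ inside disjunct $j$, with \eqref{eq:general_set_4}--\eqref{eq:general_set_5} folded in via Remarks~\ref{remark: complementarity 1}--\ref{remark: complementarity 2} and the bounds $-x_i\ge-1$ via Remark~\ref{remark:upper bound}). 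The trailing claim ``hence valid for $\conv(\mac{F}_c)$'' then follows from Proposition~\ref{prop:genprob_chance_reform}, since $\mac{F}_c\subseteq\proj_{(z,\vc{x})}\mac{S}_c\subseteq\proj_{(z,\vc{x})}\conv(\mac{S}_c)$.

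Next I would fix $j\in M$ and read off from \eqref{eq:general_set_1}--\eqref{eq:general_set_5} evaluated at $\vc{y}=\vc{e}^j$ that $\proj_{(z,\vc{x})}\mac{S}_c(\vc{e}^j)=\{\,z\ge h_j,\ x_i=1\ (i<j),\ x_j=0,\ 0\le x_i\le1\ (i>j),\ \vc{\pi}\tr\vc{x}\le\varepsilon\,\}$, and minimize the left-hand side of \eqref{eq:chance_lift_ineq_generic} over this polyhedron. Because $\delta_{t_\iota}\ge h_{t_{\iota+1}}-h_{t_\iota}$, the coefficient $h_{t_\iota}-h_{t_{\iota+1}}+\delta_{t_\iota}$ of $x_{t_\iota}$ is nonnegative, so at the minimizer $z=h_j$, every unfixed $x_{t_\iota}$ (those with $t_\iota>j$) and every $x_i$ with $i\notin P\cup Q$ equals $0$, while $x_{t_\iota}=1$ for $t_\iota<j$; telescoping their coefficients over $\iota=1,\dots,a_j$ gives $h_{t_1}-h_{t_{a_j+1}}+\sum_{\iota\in L:\,t_\iota<j}\delta_{t_\iota}$. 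After using the fixed values $x_{q_\iota}=1$ for $q_\iota<j$ and $x_j=0$, what is left of the objective is $\mathbb{I}(j\in Q)\phi_j+\sum_{\iota\in V:\,q_\iota>j}\phi_{q_\iota}-\max\big\{\sum_{\iota\in V:\,q_\iota>j}\phi_{q_\iota}x_{q_\iota}:0\le x_{q_\iota}\le1,\ \sum_{\iota\in V:\,q_\iota>j}\pi_{q_\iota}x_{q_\iota}\le\varepsilon-F_{j-1}\big\}$, a continuous knapsack problem, where $F_{j-1}=F_j-\pi_j$.

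The key step is to bound this continuous-knapsack maximum by weak LP duality, using the dual point $(\lambda,\mu_\iota)=\big(m\beta_j,\,(\phi_{q_\iota}-m\pi_{q_\iota}\beta_j)^+\big)$. Condition \eqref{A_1} says precisely that $m\beta_j$ separates the ratios $\phi_{q_\iota}/\pi_{q_\iota}$ of the two groups, i.e.\ that $\mu_\iota=0$ for $\iota\in A_j$ (with $q_\iota>j$) and $\mu_\iota=\phi_{q_\iota}-m\pi_{q_\iota}\beta_j\ge0$ for $\iota\in V\setminus A_j$ (with $q_\iota>j$); this makes the dual objective equal $\sum_{\iota\in V\setminus A_j:\,q_\iota>j}\phi_{q_\iota}+m\beta_j\big(\varepsilon-F_{j-1}-\sum_{\iota\in V\setminus A_j:\,q_\iota>j}\pi_{q_\iota}\big)$. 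Substituting this bound into ``$\min$ of the left-hand side $\ge h_{t_1}$'', cancelling the common $\sum_{\iota\in V\setminus A_j:\,q_\iota>j}\phi_{q_\iota}$, and rewriting the coefficient of $\beta_j$ via $F_{j-1}=F_j-\pi_j$ and $\sum_{\iota\in V\setminus A_j:\,q_\iota>j}\pi_{q_\iota}=\sum_{\iota\in V:\,q_\iota>j}\pi_{q_\iota}-\sum_{\iota\in A_j:\,q_\iota>j}\pi_{q_\iota}$, the requirement collapses term by term into exactly inequality \eqref{A_3} (with $\sum_{\iota\in L:\,t_\iota<j}\delta_{t_\iota}=\sum_{\iota\le a_j}\delta_{t_\iota}$). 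Since \eqref{A_1}--\eqref{A_3} are assumed for every $j$, \eqref{eq:chance_lift_ineq_generic} holds on every $\mac{S}_c(\vc{e}^j)$ with $j\in M$; the boundary disjunct $j=0$ is empty whenever $\varepsilon<1$, and otherwise forces $\vc{x}=\vc{1}$, $z\ge0$, and is dispatched by a direct check from the telescoping identity and $\sum_{\iota\in L}\delta_{t_\iota}\le h_{r+1}$.

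I expect the main obstacle to be the exact bookkeeping in this last step: matching the continuous-knapsack dual bound to the right-hand side of \eqref{A_3} coefficient by coefficient (constant included), and keeping track of how the slacks $\delta_{t_\iota}$ and the constraint $\sum_{\iota\in L}\delta_{t_\iota}\le h_{r+1}$ enter the disjuncts with $a_j=l$ (where $t_{a_j+1}=r+1$). If instead one presents the argument inside the \BLP formalism rather than directly over the disjuncts, the analogous obstacle is to verify that the chosen \BLP assignment---after the substitutions (R0)--(R2) and the complementarity simplifications of Remarks~\ref{remark: complementarity 1}--\ref{remark: complementarity 2}---produces precisely the coefficients $h_{t_\iota}-h_{t_{\iota+1}}+\delta_{t_\iota}$ and $-\phi_{q_\iota}$ and the constant $h_{t_1}-\sum_{\iota\in V}\phi_{q_\iota}$, with all multipliers (in particular each $\beta_j$) nonnegative; conditions \eqref{A_1}--\eqref{A_3} are exactly the nonnegativity and right-hand-side constraints that place the resulting dual vector in $\mac{C}_{(\hat{k},\hat{j})}$.
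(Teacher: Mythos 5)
Your proposal is correct, and it takes a genuinely different route from the paper. The paper proves validity \emph{inside} the BL\&P formalism: it exhibits an explicit class-$(t_1^3,t_1)$ assignment (sets $\mac{K}^0,\mac{K}^j,\mac{T}^0,\mac{T}^j$ with concrete nonnegative weights, including an auxiliary multiplier $\beta_0=\tfrac{h_{r+1}-\sum_{\iota\in L}\delta_{t_\iota}}{m(1-\varepsilon)}$ on the knapsack row in the $j=0$ block), carries out the aggregation, and uses \eqref{A_1}--\eqref{A_3} precisely to certify the sign conditions needed for the elimination rules (R0)--(R2) and Remarks~\ref{remark: complementarity 1}--\ref{remark: complementarity 2}; validity then follows from Theorem~\ref{thm:BLP} and Proposition~\ref{prop:genprob_chance_reform}. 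You instead verify the inequality disjunct by disjunct: on $\proj_{(z,\vc{x})}\mac{S}_c(\vc{e}^j)$ the minimization of the left-hand side reduces to a continuous knapsack over the free variables $x_{q_\iota}$ with $q_\iota>j$, and the dual certificate $(\lambda,\mu_\iota)=(m\beta_j,(\phi_{q_\iota}-m\pi_{q_\iota}\beta_j)^+)$ turns \eqref{A_1} into the complementarity split between $A_j$ and $V\setminus A_j$ and \eqref{A_3} into exactly the required bound after substituting $F_{j-1}=F_j-\pi_j$; I checked this term-by-term matching and it is correct (including the telescoping to $h_{t_1}-h_{t_{a_j+1}}+\sum_{\iota:t_\iota<j}\delta_{t_\iota}$ and the handling of $j\in Q$ versus $q_\iota<j$). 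Your route is more elementary and self-contained---it bypasses Theorem~\ref{thm:BLP}, Proposition~\ref{prop:ext_point_1}, and the substitution remarks entirely, and it also sidesteps the delicate $j=0$ disjunct (empty when $\varepsilon<1$) rather than needing the weight $\beta_0$. What it does \emph{not} deliver is the exhibition of \eqref{eq:chance_lift_ineq_generic} as the output of a concrete BL\&P assignment, which the paper's proof provides and which is what feeds the facet-necessity analysis in Proposition~\ref{prop:facet}; your closing remark correctly identifies that recasting the argument in that formalism is the remaining (purely bookkeeping) step if one wants that extra information.
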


\begin{proof}
    
    To comply with the indexing rules of the \BLP procedure, we refer to the bilinear inequalities \eqref{eq:general_set_1}--\eqref{eq:general_set_5} as {\it type} $k = 1, \dotsc, 5$, respectively. 
    Since each bilinear inequality of type $k=1,\dotsc,4$ entails multiple constraints with indices $j \in M$, we distinguish each individual constraint by using the notation $j^k$ in the \BLP procedure.
    We also refer to the index of the knapsack constraint $-\vc{\pi}\tr \vc{x} \ge -\varepsilon$ in $\Xi$ with $c$. 

    %\smallskip
    Suppose that for some $(\phi_{q_1}, \dotsc, \phi_{q_v}) \in \Re^{v}_+$ and $\beta_j \in \Re_+$, for each $j \in M$, \eqref{A} holds. 
    We show that \eqref{eq:chance_lift_ineq_generic} is valid for $\proj_{(z,\vc{x})} \conv(\mac{S}_c)$, which implies its validity for $\conv(\mac{F}_c)$ by Proposition~\ref{prop:genprob_chance_reform}.
    To show the former statement, according to Theorem~\ref{thm:BLP}, it suffices to prove that \eqref{eq:chance_lift_ineq_generic} can be obtained as a \BLP inequality.
    In particular, we show that this \BLP inequality is produced by a class-$(t_{1}^3,t_{1})$ \BLP assignment, where $t_1^3$ indicates the bilinear constraint of type 3 with constraint index $t_1$ as defined in set $P$.
    The choice of $t_1$ for the second component of the class follows from Remark~\ref{remark:simplification} since the bilinear constraints in $\mac{S}_c$ contain a single $y$ variable.
    The aggregation sets for this \BLP assignment are selected as follows.
Define $\beta_0=\frac{h_{r+1} - \sum_{\iota \in L} \delta_{t_\iota}}{m(1-\varepsilon)}$.  
It is clear that $\beta_0 \geq 0$ for any $\varepsilon \in [0, 1)$ as $h_{r+1} \geq \sum_{\iota \in L} \delta_{t_\iota}$ by assumption.
Let $\mac{K}^0=\{j^1: j \in Q\} \cup \{j^2: j \in M\}$,
    $\mac{K}^{j}= \{j^3\}$ for $j\in M\setminus \{t_1\}$, 
    $\mac{T}^0=\{c\}$, 
    and $\mac{T}^{j}=\{c\}$ for $j \in M$.
    %such that $\beta_j \ge 0$.
    All other sets $\mac{K}^j$ and $\mac{T}^j$ are empty. 

    %\smallskip
    Next, we assign aggregation weights for the constraints in the \BLP assignment as described in steps (A1)--(A3) of the \BLP procedure. 
    Because of the special structure of the bilinear constraints in $\mac{S}_c$, determining the aggregation weights of these constraints can be substantially simplified as discussed in Remark~\ref{remark:simplification}.
    In particular, the base constraint has a constant weight of $1$.
    The bilinear constraint $t_{\iota}^2 \in \mac{K}^0$, for $\iota \in L$, has a constant weight of $(h_{t_{\iota}} - h_{t_{\iota+1}}  + \delta_{t_\iota} + m \pi_{t_{\iota}} \beta_0)$. 
    This weight is non-negative because $h_{t_{\iota}} - h_{t_{\iota+1}} + \delta_{t_\iota} \geq 0$ by assumption, and $\beta_0 \geq 0$ as discussed above.
    The constraints $q^1_{\iota}, q^2_{\iota} \in \mac{K}^0$, for $\iota \in V$, have constant weights of $(\phi_{q_\iota} - m\pi_{q_\iota} \beta_0)^{+}$ and $(m\pi_{q_\iota}\beta_0 -  \phi_{q_\iota})^{+}$, respectively.
    The bilinear constraint $j^2 \in \mac{K}^0$, for $j \in M\setminus (P \cup Q)$, has a constant weight of $m \pi_j\beta_0$. 
    The bilinear constraint $j^3 \in \mac{K}^{j}$, for $j\in M\setminus \{t_1\}$, has a constant weight of $1$.
    The knapsack constraint $c \in \mac{T}^0$ has a weight of $m \beta_0 (1 - \vc{1} \tr \vc{y})$.
    Furthermore, the knapsack constraint $c \in \mac{T}^j$, for $j \in M$, has a weight of $m \beta_j y_j$. 
    Note that all these weights are non-negative. All other weights are zero.
    
%\smallskip
The next step in the \BLP procedure is to aggregate the above weighted constraints to obtain the aggregated inequality 
    \begin{align}
	%\begin{split}
    & \sum_{j \in M} z y_j + \sum_{\iota \in L} (h_{t_{\iota}} - h_{t_{\iota+1}} + \delta_{t_\iota}) x_{t_{\iota}} +  \sum_{\iota \in V} \phi_{q_{\iota}} (1-x_{q_{\iota}}) + \sum_{j \in M} \big(h_{t_1} - h_j - \sum_{\iota \in V} \phi_{q_\iota}+ m \varepsilon \beta_j\big) y_j \nonumber \\
       & \quad {} + \sum_{i \in M} \sum_{j \in M} \sigma_{ij} x_{i} y_j \ge h_{t_1}, \label{eq:chance_lift_aggregated1}
    %\end{split}
    \end{align}
    where $\sigma_{ij}$, for $i,j \in M$, is defined as 
    \begin{equation}
    \label{eq:sigma}
        \sigma_{ij}=
        \begin{cases}
                -h_{t_{\iota}} + h_{t_{\iota+1}} - \delta_{t_\iota} - m \pi_{t_{\iota}}\beta_{j}, & i= t_{\iota}, \; \iota \in L, \;  j \in M,\\
  \phi_{q_{\iota}} - m\pi_{q_{\iota}}\beta_j, & i= q_{\iota}, \; \iota \in V, \; j \in M, \\
                -m\pi_i\beta_{j}, & i \in M \setminus (P \cup Q), \; j \in M.
        \end{cases}
    \end{equation}

    As the next step in the \BLP procedure, we apply the rules (R1) and (R2) to substitute the bilinear terms and $y$ variables in  \eqref{eq:chance_lift_aggregated1}. 
    Note that $\sigma_{ij}\le 0$ for $i \in P$ (i.e., $i= t_{\iota}, \iota \in L$) and $j \in M$ because $\delta_{t_\iota} \geq -h_{t_{\iota}} + h_{t_{\iota+1}}$ and $\beta_j \geq 0$ by definition. 
    We also have $\sigma_{ij}\le 0$ for $i \in M\setminus (P \cup Q)$ and $j \in M$.
    However, for $i \in Q$ (i.e., $i= q_{\iota}, \iota \in V$) and $j \in M$, $\sigma_{ij}$ can be positive or non-positive. 
    %Since $\Xi$ involves upper bounds for variables $\vc{x}$, we can apply the substitution rule (R0) as described in Remark~\ref{remark:upper bound} to the terms $\sum_{j \in P} \sum_{\iota \in V} \sigma_{q_{\iota}j} x_{q_{\iota}} y_j$ in \eqref{eq:chance_lift_aggregated1} to replace it with $\sum_{j \in P}  y_j \sum_{\iota \in V} \phi_{q_{\iota}}$.
    Using the above observations about the sign of $\sigma_{ij}$, we can separate the terms in $\sum_{j \in M} \sum_{i \in M} \sigma_{ij} x_{i} y_j$ in \eqref{eq:chance_lift_aggregated1} as follows:
    \begin{align}
    \label{eq:chance_lift_aggregated_3}\sum_{j \in M} \sum_{\iota \in L} (-& h_{t_{\iota}} + h_{t_{\iota+1}} - \delta_{t_\iota} - m \pi_{t_{\iota}}\beta_{j}) x_{t_\iota} y_j +  \sum_{j \in M} \sum_{i \in M \setminus (P \cup Q)}  (-m \pi_i \beta_{j}) x_i y_j \nonumber \\ 
    & \quad {} +  \sum_{j \in  M} \sum_{\iota \in V} (\phi_{q_{\iota}} - m \pi_{q_{\iota}}\beta_{j}) x_{q_{\iota}} y_{j}.   
    \end{align}
    Since \eqref{eq:general_set_4} matches the constraint form described in Remark \ref{remark: complementarity 1}, we can apply the substitution rule in that remark to all $x_i y_j$ terms with $i=j$ to eliminate them.
    Similarly, since \eqref{eq:general_set_5} matches the constraint form described in Remark~\ref{remark: complementarity 2}, we can apply the substitution rule (i) in that remark to all $x_i y_j$ terms with $i<j$ that have non-positive coefficients.
    This includes terms $x_{t_\iota} y_j$ for $j \in M$ and $\iota \in L$ with $t_{\iota} < j$ since their coefficient $-h_{t_{\iota}} + h_{t_{\iota+1}} - \delta_{t_\iota} - m \pi_{t_{\iota}}\beta_{j}$ is non-positive as established above, as well as terms $x_i y_j$ for $j \in M$ and $i \in M \setminus (P \cup Q)$ with $i < j$ since their coefficient $- m\pi_{i}\beta_{j}$ is non-positive.
    For term $x_{q_\iota} y_j$ for $j \in M$ and $\iota \in V$ with $q_\iota < j$, there are two cases. If the coefficient $\phi_{q_{\iota}} - m \pi_{q_{\iota}} \beta_j$ of this term is non-negative, then we may apply rule (R0) as described in Remark~\ref{remark:upper bound} to replace it with $(\phi_{q_{\iota}} - m\pi_{q_{\iota}} \beta_j)y_j$. If this coefficient is non-positive, we may apply the substitution rule (i) in Remark~\ref{remark: complementarity 2} to replace it with $(\phi_{q_{\iota}} - m\pi_{q_{\iota}} \beta_j)y_j$; both cases leading to the same substitution term.    
    Using these substitutions while further separating the terms in \eqref{eq:chance_lift_aggregated_3} based on the precedence of $i$ and $j$ indices in the bilinear terms, we obtain 
    \begin{align}
        %\begin{split}
            & \sum_{j \in M} \sum_{\iota \in L, t_{\iota} >j } (-h_{t_{\iota}} + h_{t_{\iota+1}}  - \delta_{t_\iota} - m\pi_{t_{\iota}} \beta_{j}) x_{t_\iota} y_j +  \sum_{j \in M} y_j  \sum_{\iota \in L, t_{\iota} < j} (-h_{t_{\iota}} + h_{t_{\iota+1}}   - \delta_{t_\iota} - m\pi_{t_{\iota}}\beta_{j})  \nonumber \\
            & \quad {} +  \sum_{j \in M} \sum_{i \in M \setminus (P \cup Q), i > j}  (-m\pi_i \beta_{j}) x_i y_j - \sum_{j \in M} y_j \sum_{i \in M \setminus (P \cup Q), i < j} m \pi_i \beta_{j}   \nonumber \\ 
             & \quad {} +  \sum_{j \in M}   \sum_{\iota \in V\setminus A_{j}, q_{\iota} > j} (\phi_{q_{\iota}} - m\pi_{q_{\iota}}\beta_{j}) x_{q_{\iota}} y_j +  \sum_{j \in M}  y_{j} \sum_{\iota \in V\setminus A_{j}, q_{\iota} < j} (\phi_{q_{\iota}} - m\pi_{q_{\iota}}\beta_{j})\nonumber\\
             & \quad {} + \sum_{j \in M}  \sum_{\iota \in A_{j}, q_{\iota} > j} (\phi_{q_{\iota}} - m\pi_{q_{\iota}}\beta_{j})  x_{q_{\iota}} y_{j} + \sum_{j \in M}  y_{j} \sum_{\iota \in A_{j}, q_{\iota} < j} (\phi_{q_{\iota}} - m\pi_{q_{\iota}}\beta_{j}).
\label{eq:chance_lift_aggregated_4} 
    \end{align}
    Note that in \eqref{eq:chance_lift_aggregated_4},  the coefficient of $x_{q_\iota} y_j$ term, for $j \in M$ and $\iota \in V \setminus A_j$ with $q_\iota >j$, is non-negative by \eqref{A_1}. 
    Therefore, we may apply rule (R0) as described in Remark~\ref{remark:upper bound} to replace this term with $(\phi_{q_{\iota}} - m\pi_{q_{\iota}} \beta_j)y_j$.
    Moreover, the coefficient of $x_{q_\iota} y_j$ term, for $j \in M$ and $\iota \in A_j$ with $q_\iota >j$, is non-positive by \eqref{A_1}.
    All remaining product terms $x_i y_j$ have a non-positive coefficient as discussed earlier. 
    Thus, we can apply rule (R1) in the \BLP procedure to eliminate these terms with non-positive coefficients, which reduces \eqref{eq:chance_lift_aggregated_4} to
    \begin{align}
        & \sum_{j \in M} y_j  \sum_{\iota \in L, t_{\iota} < j} (-h_{t_{\iota}} + h_{t_{\iota+1}} - \delta_{t_\iota} - m \pi_{t_{\iota}}\beta_{j})  - \sum_{j \in M} y_j \sum_{i \in M \setminus (P \cup Q), i < j}  m \pi_i \beta_{j}   \nonumber \\ 
             & \quad {} +  \sum_{j \in M} y_j  \sum_{\iota \in V\setminus A_{j}, q_{\iota} > j} (\phi_{q_{\iota}} - m \pi_{q_{\iota}}\beta_{j})   +  \sum_{j \in M}  y_{j} \sum_{\iota \in V\setminus A_{j}, q_{\iota} < j} (\phi_{q_{\iota}} - m\pi_{q_{\iota}}\beta_{j})\nonumber\\
             & \quad {} + \sum_{j \in M}  y_{j} \sum_{\iota \in A_{j}, q_{\iota} < j} (\phi_{q_{\iota}} - m\pi_{q_{\iota}}\beta_{j}).
             \label{eq:chance_lift_aggregated_4prime} 
    \end{align}
    Combining all $y_j$ terms, for $j \in M$, in \eqref{eq:chance_lift_aggregated_4prime} with the $y_j$ terms on the left-hand-side of \eqref{eq:chance_lift_aggregated1} yields 
    \begin{align}
            & \sum_{j \in M} \big(h_{t_1} - h_j - \sum_{\iota \in V} \phi_{q_\iota}+ m\varepsilon \beta_j\big) y_j + \sum_{j \in M} y_j  \sum_{\iota \in L, t_{\iota} < j} (-h_{t_{\iota}} + h_{t_{\iota+1}}  - \delta_{t_\iota} - m\pi_{t_{\iota}}\beta_{j}) \nonumber \\ 
            & \quad {}  - \sum_{j \in M} y_j \sum_{i \in M \setminus (P \cup Q), i < j}  m\pi_i \beta_{j}  +  \sum_{j \in M} y_{j} \sum_{\iota \in V\setminus A_{j}, q_{\iota} \neq j} (\phi_{q_{\iota}} - m\pi_{q_{\iota}} \beta_{j}) \nonumber \\
            & \quad {} + \sum_{j \in M}  y_{j} \sum_{\iota \in A_j, q_{\iota} < j} (\phi_{q_{\iota}} - m\pi_{q_{\iota}}\beta_{j}) \nonumber \\
            = & \sum_{j \in M} \Bigg(h_{t_{a_j+1}} - h_j -\sum_{\iota \in L, t_{\iota} < j}\delta_{t_\iota} - \sum_{\iota \in V} \phi_{q_\iota}+ m\bigg(\varepsilon - \sum_{\iota \in L, t_{\iota} <j} \pi_{t_{\iota}} - \sum_{i \in M\setminus(P \cup Q),i<j} \pi_i\bigg) \beta_j\Bigg) y_j \nonumber \\
            & \quad {} +
            \sum_{j \in M}  y_{j} \Bigg( \sum_{\iota \in V\setminus A_{j},  q_{\iota} \neq j } (\phi_{q_{\iota}} - m\pi_{q_\iota} \beta_{j})   + \sum_{\iota \in A_j, q_{\iota} < j} (\phi_{q_{\iota}} - m\pi_{q_\iota}\beta_{j})  \Bigg) \nonumber \\
            = & \sum_{j \in  M}\Bigg(h_{t_{a_j+1}} - h_j -\sum_{\iota \in L, t_{\iota} < j}\delta_{t_\iota} - \sum_{\iota \in V} \phi_{q_\iota}+ m\bigg(\varepsilon - F_{j} + \pi_j   + \sum_{\iota \in V, q_{\iota} <j} \pi_{q_{\iota}}\bigg) \beta_j\Bigg) y_j \nonumber \\
            & \quad {} +
            \sum_{j \in M} y_{j} \Bigg( \sum_{\iota \in V\setminus A_{j},  q_{\iota} \neq j } (\phi_{q_{\iota}} - m\pi_{q_\iota} \beta_{j})  +  \sum_{\iota \in A_{j}, q_\iota \neq j} (\phi_{q_{\iota}} - m\pi_{q_\iota} \beta_{j}) - \sum_{\iota \in A_j, q_{\iota} > j} (\phi_{q_{\iota}} - m\pi_{q_\iota}\beta_{j})  \Bigg) \nonumber \\
            = & \sum_{j \in M} \Bigg(h_{t_{a_j+1}} - h_j -\sum_{\substack{\iota \in L\\ t_{\iota} < j}}\delta_{t_\iota} - \mathbb{I} (j \in Q) \phi_{j} + m\bigg(\varepsilon - F_{j} + \pi_j +  \sum_{\substack{\iota \in V\\ q_\iota < j}} \pi_{q_\iota} - \sum_{\iota \in V} \pi_{q_\iota} + \mathbb{I}(j \in Q)\pi_j \bigg) \beta_j  \Bigg) y_j \nonumber \\ 
            & \quad {} - \sum_{j \in M} \sum_{\substack{\iota \in A_{j}\\  q_{\iota}>j}} y_j (\phi_{q_{\iota}} - m\pi_{q_\iota}\beta_{j}), 
            \label{eq:chance_lift_aggregated_7_general2}             
    \end{align}
    where the second equality follows from the definition of $F_j$ and the fact that $\{\iota \in A_j:q_\iota \neq j\}=\{\iota \in A_j: q_\iota < j\} \cup \{\iota \in A_j: q_\iota > j\} $, while the third equality is due to $\{\iota \in V \setminus A_j:q_\iota \neq j\} \cup \{\iota \in A_j: q_\iota \neq j\} = \{\iota \in V: q_\iota \neq j\} $
    %$A_j \cup V \setminus A_j = V$, 
    for $j \in M$, and the fact that $\sum_{\iota \in V} \phi_{q_{\iota}} - \sum_{\iota \in V, q_{\iota} \neq j} \phi_{q_{\iota}} = \mathbb{I}(j \in Q) \phi_{j}$. 
    For each $j \in M$, the coefficient of $y_{j}$ in  \eqref{eq:chance_lift_aggregated_7_general2} can be simplified to 
    \begin{equation}
            m\beta_{j} \Big ( \varepsilon - F_{j} + \pi_j - \sum_{\substack{\iota \in V\\ q_\iota > j}} \pi_{q_\iota} + \sum_{\substack{\iota \in A_{j}\\   q_{\iota} > j}} \pi_{q_\iota} \Big) -h_{j} +  h_{t_{a_j+1}} -\sum_{\substack{\iota \in L\\ t_{\iota} < j}}\delta_{t_\iota} -  \mathbb{I}(j \in Q) \phi_{j} - \sum_{\substack{\iota \in A_{j}\\  q_{\iota} > j}} \phi_{q_{\iota}}, \label{eq:chance_lift_aggregated_8}
    \end{equation}
    which is enforced to be non-positive by \eqref{A_3}. 
    This allows us to apply rule (R2) in the \BLP procedure to eliminate these terms from the left-hand-side of \eqref{eq:chance_lift_aggregated1}.
    Finally, we apply rule (R1) to $\sum_{j \in M} zy_j$ on the left-hand-side of \eqref{eq:chance_lift_aggregated1} to replace it with $z$.
    As a result, this inequality reduces to \eqref{eq:chance_lift_ineq_generic}. 
    The remark following Theorem~\ref{thm:BLP} implies that \eqref{eq:chance_lift_ineq_generic} is valid for $\conv(\mac{S}_c)$.              
\end{proof}

The next result provides a necessary condition for the \BLP inequalities derived in Theorem~\ref{thm:chance_lift_generic} to be facet-defining for the convex hull of the underlying bilinear set.

\begin{proposition} \label{prop:facet}
    The \BLP inequality \eqref{eq:chance_lift_ineq_generic} is facet-defining for $\proj_{(z,\vc{x})} \conv(\mac{S}_c)$ only if
at least $2m + 1$ of the inequalities in \eqref{A_1} and \eqref{A_3} for $j \in M$, and $\pi_i \beta_j \geq 0$ for $j \in M, \ i \in M \setminus (P \cup Q), \ i > j$, hold at equality.
\end{proposition}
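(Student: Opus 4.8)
The plan is to invoke the necessity of condition (C1) for facet-definingness---noted in the remark following Theorem~\ref{thm:BLP}---and apply it to the \BLP assignment that produces \eqref{eq:chance_lift_ineq_generic} in the proof of Theorem~\ref{thm:chance_lift_generic}. Recall that this is a class-$(t_1^3,t_1)$ assignment with $\mac{K}^0=\{j^1:j\in Q\}\cup\{j^2:j\in M\}$, $\mac{K}^j=\{j^3\}$ for $j\in M\setminus\{t_1\}$, $\mac{T}^0=\{c\}$ and $\mac{T}^j=\{c\}$ for $j\in M$, with the complementarity constraints \eqref{eq:general_set_4}--\eqref{eq:general_set_5} and the upper bounds handled through Remarks~\ref{remark:upper bound}--\ref{remark: complementarity 2}, and with the aggregation weights specified there; in particular $j^1$ and $j^2$ for $j\in Q$ carry the weights $(\phi_j-m\pi_j\beta_0)^+$ and $(m\pi_j\beta_0-\phi_j)^+$. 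By (C1), any facet-defining \BLP inequality must have at least $\sum_{j=0}^m(|\mac{K}^j|+|\mac{T}^j|)+\sum_{i=0}^n(\mathbb{I}(q_i)-q_i)$ of its bilinear-term and $y$-variable coefficients vanish in the aggregated inequality, where---consistently with Proposition~\ref{prop:ext_point_1}(i),(iii)---the sets $\mac{K}^j,\mac{T}^j$ retain only constraints carrying a strictly positive weight, and the vanishing is tallied before the substitutions of (R1), (R2) and of Remarks~\ref{remark:upper bound}--\ref{remark: complementarity 2}. First I would evaluate this bound, then match the vanishing coefficients one-to-one with equalities among \eqref{A_1}, \eqref{A_3}, and $\pi_i\beta_j\ge0$.

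For the bound: since for each $j\in Q$ at most one of $(\phi_j-m\pi_j\beta_0)^+,(m\pi_j\beta_0-\phi_j)^+$ is positive, the active part of $\mac{K}^0$ consists of the $l$ constraints $t_\iota^2$, the $m-l-v$ constraints $j^2$ with $j\in M\setminus(P\cup Q)$, and one constraint for each of the $v$ indices in $Q$, totalling $m$. Together with $\sum_{j\in M\setminus\{t_1\}}|\mac{K}^j|=m-1$, $|\mac{T}^0|=1$, and $\sum_{j\in M}|\mac{T}^j|=m$, this gives $\sum_{j=0}^m(|\mac{K}^j|+|\mac{T}^j|)=3m$. The only correction term comes from collapsing $\sum_{j\in M}zy_j$ into $z$, for which $q_z=m$, so $\sum_{i=0}^n(\mathbb{I}(q_i)-q_i)=\mathbb{I}(m)-m=1-m$; hence the bound in (C1) equals $3m+(1-m)=2m+1$.

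Next I would identify the vanishing coefficients. Following the reduction in the proof of Theorem~\ref{thm:chance_lift_generic}, after aggregation and the substitutions of Remarks~\ref{remark:upper bound}--\ref{remark: complementarity 2} the only surviving bilinear terms are $zy_j$ (coefficient $1$), $x_{t_\iota}y_j$ for $t_\iota>j$ (coefficient $-h_{t_\iota}+h_{t_{\iota+1}}-\delta_{t_\iota}-m\pi_{t_\iota}\beta_j$, strictly negative once $\delta_{t_\iota}>h_{t_{\iota+1}}-h_{t_\iota}$), $x_{q_\iota}y_j$ for $q_\iota>j$ with $\iota\in A_j$ (coefficient $\phi_{q_\iota}-m\pi_{q_\iota}\beta_j$), and $x_iy_j$ for $i\in M\setminus(P\cup Q)$ with $i>j$ (coefficient $-m\pi_i\beta_j$); the surviving $y$-variable coefficients are the expressions in \eqref{eq:chance_lift_aggregated_8}. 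Since the first two families never vanish, every coefficient that becomes zero during the aggregation belongs to one of the last three, and it does so exactly when---respectively---the pertinent bound inequality of \eqref{A_1} is tight, $\pi_i\beta_j\ge0$ is tight, or \eqref{A_3} is tight (this last equivalence being the content of the passage leading to \eqref{eq:chance_lift_aggregated_8}). Distinct vanishing coefficients correspond to distinct inequalities on this list, so the at least $2m+1$ vanishing coefficients guaranteed by (C1) force at least $2m+1$ of the stated inequalities to hold at equality.

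The delicate point---and the main obstacle---is to make this count exact rather than merely generic: one must ensure that the zero-weight member of each pair $j^1,j^2$ for $j\in Q$ is genuinely excluded from $\mac{K}^0$ (so the tally is $m$, not $m+v$); that the coefficients removed by Remarks~\ref{remark:upper bound}--\ref{remark: complementarity 2} are not counted, in agreement with those constraints being absent from $\mac{K}^j,\mac{T}^j$; and that degenerate configurations are handled---chiefly $\beta_j=0$ for some $j$ (which empties $\mac{T}^j$ and lowers the (C1) bound, but simultaneously forces $\phi_{q_\iota}=0$ for $\iota\in A_j$ with $q_\iota>j$ and makes the corresponding coefficients, together with the inequalities $\pi_i\beta_j\ge0$, vanish/tight, thereby restoring the count), as well as $\beta_0=0$, coincidences among the ratios $\phi_{q_\iota}/\pi_{q_\iota}$, and the borderline case $\delta_{t_\iota}=h_{t_{\iota+1}}-h_{t_\iota}$ (in which $x_{t_\iota}$ drops out of \eqref{eq:chance_lift_ineq_generic}, treated by removing $t_\iota$ from $P$ when $\iota\ge2$ and by a direct check when $\iota=1$). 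Verifying that the correspondence between vanishing coefficients and tight inequalities loses nothing in each of these cases is the crux of the argument.
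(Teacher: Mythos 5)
Your proposal follows essentially the same route as the paper's proof: invoke the necessity of condition (C1) for facet-definingness, count the nonzero-weight constraints in the assignment from Theorem~\ref{thm:chance_lift_generic} as at most $3m$, subtract $m-1$ for the collapse of $\sum_{j\in M}zy_j$ into $z$ to obtain the bound $2m+1$, and then match the vanishing coefficients with tight instances of \eqref{A_1}, \eqref{A_3}, and $\pi_i\beta_j\ge 0$ after excluding the $x_{t_\iota}y_j$ terms with $t_\iota>j$ (whose vanishing would force $\beta_j=0$ and $\delta_{t_\iota}=h_{t_{\iota+1}}-h_{t_\iota}$, i.e., a configuration in which $t_\iota$ drops out of $P$). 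The edge cases you flag at the end are treated at the same level of detail (or less) in the paper's own argument, so there is no gap relative to it.
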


\begin{proof}

The remark following Theorem~\ref{thm:BLP} implies that, if the \BLP inequality \eqref{eq:chance_lift_ineq_generic} is facet-defining for $\proj_{(z,\vc{x})} \conv(\mac{S}_c)$, then it satisfies condition (C1). 
    In the \BLP procedure outlined in the proof of Theorem~\ref{thm:chance_lift_generic}, the total number of constraints in the sets $\mac{K}^j$ and $\mac{T}^j$ for $j \in M$ that participate in the aggregation with nonzero weights is at most $3m$, since by definition, at most one of the constraints $q_{\iota}^1$ and $q_{\iota}^2$ in $\mac{K}^0$ can have a nonzero weight for each $\iota \in V$. According to the aggregation procedure described in the proof of Theorem~\ref{thm:chance_lift_generic}, rule (R1) is applied to the terms with positive coefficients, represented by $\sum_{j \in M} zy_j$, where the maximum coefficient equals one and is achieved by $m$ terms.
    Thus, it follows from the condition (C1) of the \BLP procedure that the number of bilinear and $y$ terms whose coefficients become zero must be at least $3m + (1-m) = 2m + 1$.
    % THIS IS THE PROOF FOR THE PART THAT IS COMMENTED OUT IN THE THEOREM STATEMENT: As a result, $2m + v + 1$ of the coefficients of the bilinear terms in \eqref{eq:chance_lift_aggregated_3} and the coefficients of the $y$ variables in \eqref{eq:chance_lift_aggregated_8} must become zero, which yields the system of equations in \eqref{eq:necessary}.
    In particular, $2m + 1$ of the coefficients of the bilinear terms in \eqref{eq:chance_lift_aggregated_4}, together with the coefficients of the $y$-variables in \eqref{eq:chance_lift_aggregated_8}, must become zero.
Note that for the coefficient of $x_{t_{\iota}}y_j$ with $j \in M$ and $\iota \in L$, where $t_{\iota}>j$, to vanish in \eqref{eq:chance_lift_aggregated_4}, it must hold that $m\pi_{t_{\iota}}\beta_j = -h_{t_{\iota}} + h_{t_{\iota+1}} -\delta_{t_{\iota}}$.
    Since the left-hand side of this equality is non-negative and the right-hand side is non-positive by assumption, we must have $\beta_j = 0$ and $\delta_{t_{\iota}} = -h_{t_{\iota}} + h_{t_{\iota+1}}$. This implies that the coefficient of $x_{t_{\iota}}$ becomes zero in \eqref{eq:chance_lift_ineq_generic}, meaning that index $t_{\iota}$ can be removed from set $P$, which in turn corresponds to a different parameter configuration for obtaining the inequality.
Consequently, we may exclude the coefficients of $x_{t_{\iota}}y_j$ for $j \in M$ and $\iota \in L$ with $t_{\iota}>j$ from the set of coefficients selected to be zero.
Thus, the remaining coefficients to be zero correspond to the conditions represented by constraints \eqref{A_1}, \eqref{A_3} for all $j \in M$, and $\pi_i \beta_j \geq 0$ for all $j \in M$ and $i \in M \setminus (P \cup Q)$ with $i > j$, among which at least $2m + 1$ must hold at equality.              
\end{proof}

%\smallskip
The family of \BLP inequalities described in Theorem~\ref{thm:chance_lift_generic} subsumes several existing results in the literature, including those in Propositions~\ref{prop:mixing}–\ref{prop:chance_lift_simge}, as well as Proposition~\ref{prop:chance_lift_permutation} in the case of a uniform probability distribution, as will be demonstrated in Section~\ref{subsec:BLP family-special}.
At the same time, this family encompasses a substantially broader variety of valid inequalities for $\conv(\mac{F}_c)$ than previously available results, as evidenced by the computational experiments presented in Section~\ref{sec:numerical}.
To further illustrate, we next present an example of facet-defining \BLP inequalities derived from Theorem~\ref{thm:chance_lift_generic} that cannot be obtained using Proposition~\ref{prop:chance_lift_permutation}, which provides the most general form of known inequalities for $\mac{F}_c$ in the literature, as discussed in Section~\ref{sec:background}.

\begin{example}
\label{ex:3}
Consider the instance of $\mac{F}_c^=$ given in Example~\ref{ex:1}, where $m=10$, $\{h_1, \dotsc, h_m\}=\{20,18,14,11,6,5,4,3,2,1\}$, and $p=\vartheta=4$.
We established in Example~\ref{ex:1} that the inequality~\eqref{eq:Ex1} cannot be represented by the family of valid inequalities in Proposition~\ref{prop:chance_lift_permutation}.
In contrast, we show here that this inequality belongs to the family of \BLP inequalities~\eqref{eq:chance_lift_ineq_generic} derived in Theorem~\ref{thm:chance_lift_generic}.
For this problem instance, we have $r=4$, $P=\{1,4\}$, and $Q=\{5,6\}$.
Following the settings in that theorem, we may choose $\delta_1=\delta_4=-3$, and select $A_5=\{5\}$, $A_6=\{5,6\}$, and $A_j=\emptyset$ for all $j \in M \setminus \{5,6\}$.
%Define $\beta_0=2$.
Then, we can write the conditions in~\eqref{A} together with the non-negativity requirements for $\vc{\beta}$ as follows:
\begin{align*}
    & 0 \le \beta_1 \le \min\{\phi_5,\phi_6\}, \\
    & 0 \le  \beta_2 \le \min\{\phi_5,\phi_6, 4\}, \\
    & 0 \le  \beta_3 \le \min\{\phi_5,\phi_6\}, \\
    & 3 \le  \beta_4 \le \min\{\phi_5,\phi_6\}, \\
    & \max\{0,6 - \phi_5\} \le \beta_5 \le  \phi_6, \\
    & \beta_6 \ge \max\{0,7 -\phi_6\}, \\ 
    & \beta_7 \ge 4, \; 
    \beta_8 \ge 3, \; 
    \beta_9 \ge 2.5, \;
    \beta_{10} \ge 2.2.
\end{align*}
%\begin{align*}
%    & 0 \le \beta_1 \le \min\{\phi^5,\phi^6\}, \\
%    & 0 \le  \beta_2 \le \min\{\phi^5,\phi^6\}, \\
%    & \beta_2 \le 4,\\
%    & 0 \le  \beta_3 \le \min\{\phi^5,\phi^6\}, \\
%    & 0 \le  \beta_4 \le \min\{\phi^5,\phi^6\}, \\
%    & \beta_4 \ge 3,\\
%    & \beta_5 \le  \phi^6, \\
%    & \beta_5 \ge 6 - \phi^5,\\
%    & \beta_6 \ge 7 -\phi^6, \\ 
%    & \beta_7 \ge 4, \; 
%    \beta_8 \ge 3, \; 
%    \beta_9 \ge 2.5, \;
%    \beta_{10} \ge 2.2.
%\end{align*}
It is clear that, with the choice of $\phi_5 = \phi_6 = 3$ to attain the lower bound value of $\beta_4$, there exist values $\beta_1=\beta_2=\beta_3=0$, $\beta_4=\beta_5=3$, $\beta_6=\beta_7=4$, $\beta_8=3$, $\beta_9=2.5$, and $\beta_{10}=2.2$ that satisfy the above set of conditions.
Hence, the resulting \BLP inequality
\begin{equation*}
    z+ 6 x_1 + 2 x_4 + 3(1-x_5)+ 3(1-x_{6})\ge 20, 
\end{equation*}
which matches \eqref{eq:Ex1}, is valid for $\conv(\mac{F}_c^=)$.
\hfill	$\blacksquare$
\end{example}

%First, to the best of our knowledge, this is the first result to explicitly derive a closed-form family of valid inequalities for general chance constraints under an arbitrary probability distribution.
%In contrast, closed-form results in the literature are limited to the special case of uniform probability distributions.
%Second, relative to the most general known result in the literature as outlined in Proposition \ref{prop:chance_lift}, Theorem \ref{thm:chance_lift_generic} permits sets $Q \subseteq \{r+2, \dotsc, m\}$, thereby yielding an exponentially larger set of options than Proposition \ref{prop:chance_lift}, which restricts $Q$ to $\{p+1, \dotsc, m\}$; see Example \ref{ex8:ctd} below.
%Third, the coefficients $\phi_{q_{\iota}}$ for $\iota \in V$ in \eqref{eq:chance_lift_ineq_generic} may take any value combinations, provided that conditions \eqref{A_1} and \eqref{A_3} are satisfied. This flexibility yields a factorially larger family of inequalities compared to Proposition \ref{prop:chance_lift}, which enforces a nondecreasing order on the coefficients $\phi_{q_{\iota}}$; see Example ?? below.

%Recall Example \ref{ex:8}, where we studied the inequality \eqref{eq:setQ-example} that did not fall within the family described by Proposition \ref{prop:chance_lift} due to the restriction of $Q$. Here, we demonstrate that this inequality is, in fact, encompassed by the family of inequalities \eqref{eq:chance_lift_ineq_generic} derived in Theorem \ref{thm:chance_lift_generic}.

%\addtocounter{example}{-1} % step back by one

%\smallskip
In light of the above example, we note that although the family of \BLP inequalities in Theorem~\ref{thm:chance_lift_generic} includes inequalities that do not belong to the family presented in Proposition~\ref{prop:chance_lift_permutation}, it does not fully subsume that family under the general probability setting, as illustrated in Example~\ref{ex:7}. Nevertheless, the computational results in Section~\ref{sec:numerical} demonstrate that the family of \BLP inequalities provides a substantially larger coverage of the convex hull for the studied instances compared to the inequalities in Proposition~\ref{prop:chance_lift_permutation}.

\begin{example}{\cite[Example~1]{zhao2017polyhedral}}
\label{ex:7}
    Consider an instance of $\mac{F}_c$, where $m=10$, $\varepsilon=0.5$, $\{h_1, \dotsc, h_{m}\}=\{40, 38, 34, 31, 26, 16, 8, 4, 2, 1\}$. Let $\pi_1=\dotsc=\pi_4=\varepsilon/4$ and $\pi_5=\dotsc=\pi_{10}=\varepsilon/6$. 
   For this problem instance, we have  $p=6$ and $\vartheta=4$. It is shown in 
   \cite[Example~1]{zhao2017polyhedral} that 
   \begin{equation*}
        z + 2 x_1  + 4(1-x_4) + 4(1-x_7) + 8(1-x_8) \ge 40
    \end{equation*}
    is facet-defining for $\conv(\mac{F}_c)$, where $r=1$, $P=\{1\}$, and $Q=\{4,7,8\}$. Following the setting in Theorem \ref{thm:chance_lift_generic}, we may choose $\delta_1=0$. However, it can be shown that for the choice $\phi_4 = \phi_7 = 4$ and $\phi_8 = 8$, which is required to produce this inequality form, there does not exist any $\beta_j \in \Re_{+}$ for some $j \in M$ that satisfies \eqref{A} for any choice of $A_j$. Thus, this inequality is not captured by the family of \BLP inequalities in Theorem \ref{thm:chance_lift_generic}. \hfill	$\blacksquare$
\end{example}

%\smallskip
As noted earlier, beyond encompassing new classes of valid inequalities that cannot be captured by existing results (as illustrated in Example~\ref{ex:3}), Theorem~\ref{thm:chance_lift_generic} also subsumes many known results on valid inequalities for $\mac{F}_c$.
A summary of the relationship between the families of inequalities developed in the literature (Propositions~\ref{prop:mixing}--\ref{prop:chance_lift_permutation}) and the \BLP inequalities established in Theorems~\ref{thm:chance_lift_generic} in covering $\conv(\mac{F}_c)$ under a general probability distribution is illustrated in Figure~\ref{fig: general_prob}.
More specifically, for the widely studied special case where the underlying probability distribution is uniform, we show in the next section that a subfamily of the \BLP inequalities in \eqref{eq:chance_lift_ineq_generic} strictly subsumes all existing results in the literature, as presented in Section~\ref{sec:chance_existing}.

\begin{figure}[htbp]
    \centering
    \begin{tikzpicture}[line width=0.9pt, font=\small]

    % --- canvas scale ---
    \def\W{5}   % rectangle width
    \def\H{4}    % rectangle height

    % --- frame rectangle ---
    \draw (-\W/2-3.4,-\H/2) rectangle (\W/2,\H/2);
    \node[above right] at (-\W/2-3.4,-\H/2) {$\conv(\mac{F}_c)$};

    % --- parameters ---
    \def\cy{0}             % vertical center
    \def\anglestart{10}    % open gap start (degrees)
    \def\angleend{350}     % open gap end

    % --- nested, right-shifted ellipses (each contains the previous) ---
    % Format: center-x / x-radius / y-radius / label
    \foreach \cx/\a/\b/\c/\i in {%
      -3.5/0.6/0.4/Prop~\ref{prop:mixing}/1,   % smaller innermost ellipse
      -2.9/1.3/0.6/Prop~\ref{prop:chance}/2,
      -2.3/2/0.8/Prop~\ref{prop:chance_lift}/3,
      -1.7/2.7/1/Prop~\ref{prop:chance_lift_simge}/4
    %-.7/4.3/1.9/Prop~\ref{prop:chance_lift_permutation}/5%
    }{
        \draw (\cx,\cy) ellipse [x radius=\a, y radius=\b, start angle=\anglestart, end angle=\angleend];
        \node[anchor=east] at (\cx+\a, \cy) {\c};
    }

% --- outer ellipse that contains all and grows to the LEFT ---
\draw (-1.1,0) ellipse [x radius=3.4, y radius=1.6, start angle=\anglestart, end angle=\angleend];
    \node[anchor=east] at (2.3, \cy) {Prop~\ref{prop:chance_lift_permutation}};

% --- outer ellipse that contains all and grows to the LEFT ---
\draw (-2.3,0) ellipse [x radius=3.4, y radius=1.6, start angle=\anglestart, end angle=\angleend];
    \node[anchor=west] at (-5.7, \cy) {Thm~\ref{thm:chance_lift_generic}};

    \end{tikzpicture}
    \caption{Relationship between existing inequalities (Propositions~\ref{prop:mixing}--\ref{prop:chance_lift_permutation}) and \BLP inequalities (Theorems~\ref{thm:chance_lift_generic}) in characterizing $\conv(\mac{F}_c)$ under a general probability distribution.}
    \label{fig: general_prob}
\end{figure}
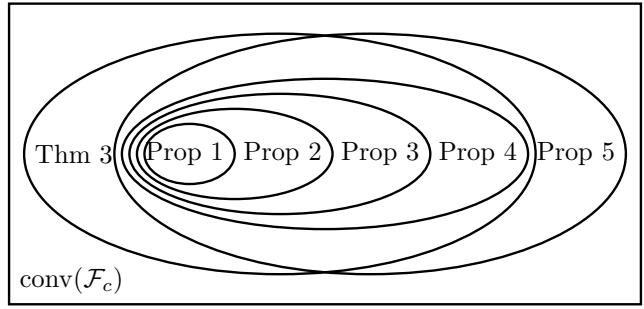

%%%%%%%%%%%%%%%%%%%%%%%%%%%%%%%%%%%%%%%%%%%%%%%%%%%%%
\subsection{Valid Inequalities for the Mixing Set with a Cardinality Constraint} \label{subsec:BLP family-special}

In this section, we present a special subfamily of the \BLP inequalities in \eqref{eq:chance_lift_ineq_generic} corresponding to the case of a uniform probability distribution.
In this setting, the explicit parameter values can be readily determined and directly incorporated into the \BLP inequality, eliminating the need to verify the existence of $\vc{\beta}$ satisfying the conditions in \eqref{A}.
This leads to an efficient approach for generating closed-form valid inequalities for CCPs.
%where $Q$ is restricted to $\{p+1, \dotsc, m\}$, $|Q|=p-r$, and explicit values are assigned to the coefficients $\phi_{i}$, $i \in Q$. 
We show in Corollary~\ref{cor:special_case} that this subfamily encompasses all families of valid inequalities previously derived in the literature, as presented in Propositions~\ref{prop:mixing}--\ref{prop:chance_lift_permutation}. Moreover, it also includes valid inequalities that cannot be captured by Proposition~\ref{prop:chance_lift_permutation} (see Example~\ref{ex:9}).
%Even this special sub-family yields a factorially larger collection of valid inequalities compared to what has been documented in the literature.

%Theorem \ref{thm:chance_lift_generic} introduces a broad family of valid inequalities, derived from the existence of feasible $\beta_j$ to \eqref{A_1}–\eqref{A_3} for certain choices of $A_j$, $j \in M \setminus P$. 
%However, it does not provide guidance on how to select the $A_j$’s for a given $(\phi_{q_1}, \dotsc, \phi_{q_{p-r}}) \in \Re^{p-r}_+$.  In contrast, Theorem \ref{thm: chance_liftVIAgeneric_permutation} establishes the validity of \eqref{eq:chance_lift_ineq_generic} for a specific structure of $(\phi_{q_1}, \dotsc, \phi_{q_{p-r}})$, by carefully constructing the corresponding $A_j$’s. As we shall see shortly, this structure generalizes the choices outlined in Proposition \ref{prop:chance_lift}. 
%In a series of propositions we show that for the inequalities in Propositions \ref{prop:mixing}--\ref{prop:chance_lift}, we construct $\beta_{j}\ge 0$ and $A_j $, $j \in M\setminus P$ that satisfy \eqref{A_1}--\eqref{A_3}.  

\begin{theorem}
    \label{thm: chance_liftVIAgeneric_permutation}
    Consider the mixing set with a cardinality constraint $\mac{F}_c^=$, as defined in \eqref{eq:cardinality}.
    Let $r \in \{1,\dotsc,p\}$, $l \in \{1,\dotsc,r\}$, and $v \in \{1,\dotsc, p-r\}$. Define $V:=\{1,\dotsc, v\}$ and let $s_\iota=p-r-v+\iota$ for $\iota \in V$. 
    Define $L:=\{1,\dotsc, l\}$ and let $P=\{t_\iota: \iota \in L\} \subseteq \{1,\dotsc, r\}$, where $t_1 < \cdots < t_l$. 
    Select  $\delta_{t_\iota} \in \Re$ for  $\iota \in L$ such that $\delta_{t_\iota} \ge h_{t_{\iota+1}} - h_{t_\iota}$, $\sum_{\iota=1}^{k-1} \delta_{t_\iota} \ge 0$ for $k=2, \dotsc, l$, and $\sum_{\iota \in L} \delta_{t_{\iota}} \le  h_{r+s_1} -h_{r+s_2}$, with  $t_{l+1}:=r+s_1$. 
    Let $Q = \{q_1, \dotsc,q_{v}\}$, where $r+s_\iota+1 \leq q_\iota \leq m$ for $\iota \in V$. Define 
\begin{subequations}
\label{alpha}
    \begin{align}
     & \phi_{q_{1}} = h_{r+s_1}-h_{r+s_2} -\sum_{k \in L} \delta_{t_k}, \label{alpha_1}\\
     & \phi_{q_{\iota}} = \max\Bigg\{\phi_{q_{\iota-1}}, h_{r+s_1}- h_{r+s_\iota+1} - \sum_{k \in L} \delta_{t_k} - \sum_{\substack{k=1\\ q_{k} \ge r+s_\iota+1}}^{\iota-1} \phi_{q_{k}} \Bigg\}, \quad \iota=2, \dotsc, v. \label{alpha_2}
    \end{align}
\end{subequations}
Then, the \BLP inequality
\begin{equation} \label{eq:chance_lift_ineq_generic_permutation}
    z + \sum_{\iota=1}^{l} (h_{t_{\iota}} - h_{t_{\iota+1}}+ \delta_{t_\iota}) x_{t_{\iota}} + \sum_{\iota=1}^{v} \phi_{q_{\iota}} (1-x_{q_{\iota}}) \ge h_{t_{1}}, 
\end{equation}
is valid for $\proj_{(z,\vc{x})} \conv(\mac{S}_c^=)$; hence, is valid for $\conv(\mac{F}_c^=)$.

\end{theorem}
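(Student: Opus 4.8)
The plan is to obtain \eqref{eq:chance_lift_ineq_generic_permutation} as a particular instance of the \BLP inequality \eqref{eq:chance_lift_ineq_generic} of Theorem~\ref{thm:chance_lift_generic}, specialized to the uniform distribution $\pi_i = 1/m$ (so that $m\pi_{q_\iota}=1$, $mF_j = j$, and $m\varepsilon = p$ in \eqref{A}), by exhibiting explicit sets $A_j$ and multipliers $\beta_j$ satisfying \eqref{A} when the $\phi_{q_\iota}$ are the ones produced by the recursion \eqref{alpha}. The first step is to reconcile the two conventions for $t_{l+1}$: Theorem~\ref{thm:chance_lift_generic} uses $t_{l+1}:=r+1$ while the present statement uses $t_{l+1}:=r+s_1$. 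Since $s_1 = p-r-v+1$, I would apply Theorem~\ref{thm:chance_lift_generic} with its parameter ``$r$'' replaced by $\hat r := p-v = r+s_1-1$, so that $\hat r+1 = r+s_1$ and the conventions coincide; in particular $r+s_\iota = \hat r+\iota$ for $\iota\in V$. One then checks admissibility for Theorem~\ref{thm:chance_lift_generic} with this $\hat r$: $\hat r\in\{1,\dots,p\}$ because $1\le v\le p-r\le p-1$; $P\subseteq\{1,\dots,r\}\subseteq\{1,\dots,\hat r\}$; the index count $v$ is allowed since $v\le p-\hat r+l = v+l$; $Q\subseteq\{r+s_1+1,\dots,m\}\subseteq\{\hat r+1,\dots,m\}$; the hypotheses $\delta_{t_\iota}\ge h_{t_{\iota+1}}-h_{t_\iota}$ (with $t_{l+1}=\hat r+1$) and $\sum_{\iota\in L}\delta_{t_\iota}\le h_{r+s_1}-h_{r+s_2}\le h_{\hat r+1}$ match (and are no weaker than) those of Theorem~\ref{thm:chance_lift_generic}; and, rewriting \eqref{alpha}, $\phi_{q_1}=h_{\hat r+1}-h_{\hat r+2}-\sum_{k\in L}\delta_{t_k}\ge 0$ and $\phi_{q_\iota}=\max\{\phi_{q_{\iota-1}},\,h_{\hat r+1}-h_{\hat r+\iota+1}-\sum_{k\in L}\delta_{t_k}-\sum_{k<\iota,\,q_k\ge \hat r+\iota+1}\phi_{q_k}\}$, so that $0\le\phi_{q_1}\le\phi_{q_2}\le\dots\le\phi_{q_v}$, i.e. $(\phi_{q_1},\dots,\phi_{q_v})\in\Re^v_+$ as required.

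Next I would construct the sets $A_j$ and the multipliers $\beta_j$. Because \eqref{alpha_2} makes $\phi$ nondecreasing in $\iota$, the bracketing condition \eqref{A_1} is satisfiable precisely when, for each $j\in M$, the set $A_j$ intersected with $\{\iota\in V: q_\iota>j\}$ is an initial segment of $\{\iota\in V: q_\iota>j\}$. I would therefore take $A_j := \{\iota\in V : r+s_\iota < j\} = \{\iota\in V: \hat r+\iota<j\}$, which is an initial segment of $V$, and set $\beta_j := \max\{\phi_{q_\iota} : \iota\in A_j,\ q_\iota>j\}$, with the convention $\beta_j := 0$ when there is no such $\iota$ (this covers all $j\le\hat r$, hence in particular all $j\le r$). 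With these choices \eqref{A_1} holds immediately from monotonicity of $\phi$ and $\beta_j\ge 0$, and the nonnegativity requirements on $\vc{\beta}$ are met.

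The central task is then to verify the scalar inequality \eqref{A_3} for every $j\in M$. After multiplying by $m$ and substituting $m\pi=1$, $mF_j=j$, $m\varepsilon=p$, it becomes an inequality among the $h_i$'s, the $\delta_{t_\iota}$'s and the $\phi_{q_\iota}$'s. I expect this to follow by a case analysis on the location of $j$ relative to the breakpoints $\hat r+1, \hat r+2,\dots$ and the elements $q_1<\dots<q_v$: the left-hand side telescopes — here the extra hypotheses $\delta_{t_\iota}\ge h_{t_{\iota+1}}-h_{t_\iota}$ and $\sum_{\iota=1}^{k-1}\delta_{t_\iota}\ge 0$ are used to control the $\sum_{\iota\in L,\,t_\iota<j}\delta_{t_\iota}$ and $h_{t_{a_j+1}}-h_j$ terms and to keep coefficients signed — while the right-hand side is dominated exactly by the expression appearing inside the $\max$ that \emph{defines} $\phi_{q_\iota}$ in \eqref{alpha_2}, so that \eqref{A_3} reduces to $\phi_{q_\iota}\ge(\text{that expression})$, which holds by construction. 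Once \eqref{A} is verified for all $j\in M$, Theorem~\ref{thm:chance_lift_generic} gives that \eqref{eq:chance_lift_ineq_generic_permutation} is valid for $\proj_{(z,\vc x)}\conv(\mac{S}_c^=)$, and Proposition~\ref{prop:uniform_chance_reform} (or, more weakly, Proposition~\ref{prop:genprob_chance_reform}) transfers validity to $\conv(\mac{F}_c^=)$.

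The main obstacle is the bookkeeping in the last two steps: one must (i) pin down $A_j$ consistently for $j$ in each of the relevant ranges — below $\hat r$, between $\hat r$ and $q_1$, strictly between consecutive $q_\iota$'s, at an index $j=q_\iota$, and above $q_v$ — keeping $A_j\cap\{q_\iota>j\}$ an initial segment so that \eqref{A_1} and the induced value of $\beta_j$ are well defined; and (ii) push the $\max$ in the recursion \eqref{alpha_2} through \eqref{A_3} so that, in each case, precisely the active branch of that $\max$ is the quantity one needs. A minor point worth stating cleanly is that, for $\mac{S}_c^=$, the knapsack bound satisfies $m\varepsilon = p$, so $mF_j=j$ and $m\varepsilon=p$ may be used without further comment.
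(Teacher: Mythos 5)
Your overall strategy is the same as the paper's: specialize Theorem~\ref{thm:chance_lift_generic} to the uniform case and exhibit, for each $j \in M$, a set $A_j$ and a multiplier $\beta_j$ satisfying \eqref{A}. Your re-indexing $\hat r = p-v$ to reconcile the two conventions for $t_{l+1}$ is a legitimate (and arguably cleaner) way to set this up, and your uniform choice $A_j=\{\iota\in V: r+s_\iota<j\}$ agrees, after intersecting with $\{\iota: q_\iota>j\}$ (which is all that \eqref{A_1} and \eqref{A_3} see), with the case-dependent choices the paper makes.

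However, there is a concrete gap in the construction of $\beta_j$. You set $\beta_j=\max\{\phi_{q_\iota}:\iota\in A_j,\ q_\iota>j\}$ (and $0$ if the set is empty), and you claim that \eqref{A_3} always ``reduces to the recursion'' \eqref{alpha_2}. That reduction is correct only on the middle range $r+s_1\le j\le p+1$, where the coefficient of $\beta_j$ in \eqref{A_3} equals $j-1-p+|\{\iota\in V\setminus A_j: q_\iota>j\}|=0$. For $j>p+1$ your $A_j$ equals $V$, the coefficient is $j-p-1>0$, and \eqref{A_3} becomes an \emph{additional lower bound} $\beta_j\ge\bigl(h_{r+s_1}-h_j-\sum_{\iota\in L,\,t_\iota<j}\delta_{t_\iota}-\mathbb{I}(j\in Q)\phi_j-\sum_{\iota\in A_j,\,q_\iota>j}\phi_{q_\iota}\bigr)/(j-p-1)$ that your formula ignores. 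It can be strictly positive: in the instance of Example~\ref{ex:9} ($h_{r+s_1}=h_2=18$, $p=4$, $\delta_1=1$, $Q=\{6,7,8\}$), for $j=10$ your rule gives $\beta_{10}=0$ while \eqref{A_3} requires $\beta_{10}\ge(18-1-1)/5=3.2$. The fix is easy --- in that range $V\setminus A_j$ contributes no upper bound in \eqref{A_1}, so one simply takes $\beta_j$ to be the maximum of the two lower bounds, exactly as the paper does in its cases (I), (IV), (VI), (VIII) --- but as written your construction fails. Symmetrically, for $j<r+s_1$ the coefficient of $\beta_j$ is negative, so \eqref{A_3} is an \emph{upper} bound on $\beta_j$ whose right-hand side must be shown nonnegative for $\beta_j=0$ to work; this is where the extra hypotheses $h_j\ge h_{t_{a_j+1}}$ and $\sum_{\iota=1}^{k-1}\delta_{t_\iota}\ge 0$ enter, and it is not a consequence of the $\phi$-recursion. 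Since the entire content of the theorem beyond Theorem~\ref{thm:chance_lift_generic} is precisely this $j$-by-$j$ verification (the paper needs nine cases), deferring it with a partially incorrect description of how it closes leaves the proof incomplete.
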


\begin{proof}
By Theorem~\ref{thm:chance_lift_generic}, it suffices to show that for each $j \in M$, there exists $\beta_j \in \Re_{+}$ that satisfies \eqref{A} for some $A_j \subseteq V$. 
First, note that $\phi_{q_{\iota}} \geq 0$ for all $\iota \in V$ because of \eqref{alpha_1} and \eqref{alpha_2}, and the assumption that $\sum_{\iota \in L} \delta_{t_{\iota}} \le  h_{r+s_1} -h_{r+s_2}$. 
Following the notation in Theorem~\ref{thm:chance_lift_generic}, for this special case, we have $\varepsilon = p/m$, $\pi_j =  1/m$, and $F_j = j/m$ for $j \in M$.
We present the results for multiple cases of $j \in M$ as follows. 
%We break down the proof into two parts for $j \in M \setminus  (P \cup Q)$ and $ j \in Q$. 

\begin{itemize}
    \item[(I)] Assume that $j \in M \setminus  (P \cup Q)$ and $j > \max\big\{p+1, \max\{q_{\iota}: \iota \in V\}\big\}$.
We choose $A_j=\emptyset$.
In this case, constraint \eqref{A_1} does not exist, and constraint \eqref{A_3} can be written as 
\begin{equation*}
    \beta_{j}   \ge  (h_{r+s_1} - h_{j} - \sum_{\iota \in L} \delta_{t_\iota})/(j-p-1), \label{2c}
\end{equation*}
using the fact that $j >p+1$.
%Note that in this relation, $a_j = l$ by definition, which leads to $h_{t_{a_j + 1}} = h_{t_{l + 1}} = h_{r+s_1}$.
%As a result, $h_{r+1} - h_j \geq 0$.
Therefore, we can simply pick $\beta_j = \max\big\{0, (h_{r+s_1} - h_{j} - \sum_{\iota \in L} \delta_{t_\iota})/(j-p-1)\big\}$ to satisfy the conditions in Theorem~\ref{thm:chance_lift_generic}.

\item[(II)] Assume that $j \in M \setminus  (P \cup Q)$ and $j = r+s_1$.
We choose $A_j=\emptyset$.
In this case, $q_{\iota} > j$ for all $\iota \in V$ as $q_{\iota} \geq r+s_{\iota}+1 \geq r+s_1+1 > r+s_1 = j$, where the first inequality follows from the definition of $q_{\iota}$ and the second inequality is implied by the definition of $s_{\iota}$.
As a result, constraint \eqref{A_1} can be written as
\begin{equation*}
        \beta_j \le \min\{\phi_{q_\iota}: \iota \in V \}. \label{4a}
\end{equation*}   
Furthermore, constraint \eqref{A_3} is trivially satisfied for this case since the right-hand-side will reduce to $-\sum_{\iota \in L} \delta_{t_\iota}$, and the coefficient of $\beta_j$ on the left-hand-side is also zero as $j-1-p+v=0$ due to $j=r+s_1$ and $s_1=p-r-v+1$.  
Therefore, we can simply select $\beta_j = 0$ to satisfy the conditions in Theorem~\ref{thm:chance_lift_generic}.

\item[(III)] Assume that $j \in M \setminus  (P \cup Q)$ and $j < r+s_1$.
We choose $A_j=\emptyset$.
Similarly to the previous case, we have $q_{\iota} > j$ for all $\iota \in V$.
Therefore, constraints \eqref{A_1} reduces to and 
\begin{align}
    & \beta_j \le \min\{\phi_{q_\iota}: \iota \in V \}. \label{eq:IIIa}    
\end{align}   
Further, the coefficient of $\beta_j$ in \eqref{A_3} reduces to $j - 1 - p + v = j - r - s_1 < 0$, where the equality follows from the definition of $s_1 = p-r-v+1$, and the inequality follows the assumption of this case.
Thus, \eqref{A_3} can be written as 
\begin{align}
    & \beta_{j}    \le   (h_j - h_{t_{a_j+1}} + \sum_{\substack{\iota \in L\\t_\iota <j}} \delta_{t_\iota})/(  r + s_1 - j ), \label{eq:IIIb} 
\end{align}   
where $ h_j - h_{t_{a_j+1}} \geq 0$ and $\sum_{\substack{\iota \in L\\t_\iota <j}} \delta_{t_\iota}\ge 0 $ by definition. 
Because the right-hand-side values of both inequalities \eqref{eq:IIIa} and \eqref{eq:IIIb} are non-negative, we can pick $\beta_j = 0$ to satisfy the conditions in Theorem~\ref{thm:chance_lift_generic}.
%\begin{equation}
%\label{5ac}
%    0 \le \beta_j \le \min\{(h_j - h_{t_{a_j+1}})/( r+1-j), \min\{\phi_{q_\iota}: \iota \in V \}\}. 
%\end{equation}

\item[(IV)] 
Assume that $j \in M \setminus  (P \cup Q)$ and $\max\{p+1, \min\{q_{\iota}: \iota \in V\}\} < j < \max\{p+1,\max\{q_{\iota}: \iota \in V\}\}$. Clearly, if $\max\{p+1, \min\{q_{\iota}: \iota \in V\}\}=\max\{p+1, \max\{q_{\iota}: \iota \in V\}\}=p+1$, then this case does not occur. 
Otherwise, we choose $A_j=\{\iota \in V: q_{\iota} > j\}$. 
Hence, constraints \eqref{A_1} and \eqref{A_3} can be written as 
\begin{align*}
    &  \beta_{j} \ge \max\{\phi_{q_\iota}: \iota \in A_j\}, \\
    & \beta_{j}    \ge   \big(h_{r+s_1} - h_{j} - \sum_{\iota \in L} \delta_{t_\iota} - \sum_{\iota \in A_j} \phi_{q_\iota}\big)/(j-p-1), 
\end{align*}   
where the second inequality follows from the fact that $j > p+1 $ by assumption. The above inequalities provide lower bounds for $\beta_j$.
Therefore, we can pick
\begin{equation*}
    %\label{3ac}
    \beta_j = \max\Big\{(h_{r+s_1} - h_{j} - \sum_{\iota \in L} \delta_{t_\iota} -\sum\nolimits_{ \iota \in A_j}\phi_{q_\iota})/( j - p -1), \max\{\phi_{q_\iota}: \iota \in A_j\}\Big\} 
\end{equation*} 
to satisfy the conditions in Theorem~\ref{thm:chance_lift_generic} as $\phi_{q_\iota} \geq 0$ by definition.

\item[(V)] Assume that $j \in M \setminus  (P \cup Q)$ and $r+s_1+1 \le j \leq p+1$. 
We can write that $ j=r+s_\iota+1$ for some $ 1 \le \iota \le v$. Choose $A_{j}=\big\{ k \in \{1,\dotsc,\iota\} : q_{k} \ge j \big\}$. It is clear that $\iota \in A_j$ as $q_{\iota} \ge r+s_\iota +1 =j$ by assumption, where the equality across the chain is impossible since it would imply $j \in Q$. Thus, $\iota \in A_j$ and $q_\iota>j$. 
As a result, constraints 
\eqref{A_1} and \eqref{A_3} can be written as 
\begin{subequations}
\begin{align}
    &  \phi_{q_{\iota}}=\max\{\phi_{q_{k}}: k \in A_j, \; q_k >j \} \le \beta_{j} \le \min\{\phi_{q_{k}}: k \in V\setminus A_j, \; q_k >j\}=\phi_{q_{\iota+1}}, \label{Va}\\
    &  \phi_{q_{\iota}} \ge   h_{r+s_1} - h_{j} -\sum_{k \in L} \delta_{t_k} -\sum_{\substack{k=1\\q_k > j }}^{\iota-1} \phi_{q_{k}} = h_{r+s_1} - h_{r+s_\iota +1} -\sum_{k \in L} \delta_{t_k} -\sum_{\substack{k=1\\q_k \geq r+s_\iota +1}}^{\iota-1} \phi_{q_{k}}. \label{Vc}
\end{align}   
\end{subequations}
The left equality in \eqref{Va} follows from the definition of $\phi_{q_{k}}$ for $k \in V$ in \eqref{alpha}, and the facts that $\iota \in A_j$ and $q_{\iota} > j$.  
For the right equality in \eqref{Va}, we claim that $V \setminus A_j \cap \{k \in V: q_k >j\}=\{\iota+1, \dotsc,v\}$. 
Note that $V \setminus A_j =\{\iota+1, \dotsc, v\} \cup \big\{k \in \{1,\dotsc,\iota-1\}: q_k < j \big\}$. 
Therefore, we can write $V \setminus A_j \cap \{k \in V: q_k >j\}= \big\{k \in \{\iota + 1,\dotsc,v\}: q_k > j \big\}$.
Consider $k \in \{\iota+1, \dotsc, v\}$. It follows from the definition of $q_k$ that $q_k \ge r + s_k + 1 > r+s_\iota+1=j$. Thus, $k \in V \setminus A_j \cap \{k \in V: q_k >j\}$, proving our claim.
%Second, consider $k \in \{1,\dotsc,\iota-1\}$ such that $ q_k < j$. Thus, by the definition of $\iota$, we have $q_k <j$; $k \notin V \setminus A_j \cap \{k \in V: q_k >j\}$. 
Moreover, the inequality in \eqref{Vc} is due to the fact that the coefficient of $\beta_j$ in \eqref{A_3} is $j-1-p+\big|\{ k \in V \setminus A_j: q_k>j\}\big|=j-1-p+ v - \iota = 0 $, where the first equality follows from the earlier argument and the second equality holds because of the definitions $j=r+s_\iota+1$ and $s_\iota=p-r-v+\iota$.
The equality in \eqref{Vc} follows from the assumption $j=r+s_\iota+1$ and the fact that $q_k \neq j$ for all $k \in V$ by assumption of this case.
Therefore, we can pick $\beta_j = \phi_{q_{\iota}}$, which satisfies \eqref{Va} because of \eqref{alpha_2} as $\phi_{q_{\iota}} \leq \phi_{q_{\iota+1}}$.
Furthermore, \eqref{Vc} is directly implied by \eqref{alpha_2}.

\item[(VI)] Assume that $j \in M \setminus  (P \cup Q)$ and $p+1 < j < \max\big\{p+1, \min\{q_{\iota}: \iota \in V\}\big\}$. Clearly, for this case to occur, we must have $\max\big\{p+1, \min\{q_{\iota}: \iota \in V\}\big\} = \min\{q_{\iota}: \iota \in V\} \neq p+1$.
%must  be strictly larger than $p+1$ and equal to $q_{\iota}$ for some $\iota \in V$.
As a result, we have $j < q_{\iota}$ for all $\iota \in V$.
Choose $A_{j} = V$.
Therefore, constraints \eqref{A_1} and \eqref{A_3} can be written as 
\begin{subequations}
\begin{align}
    &  \beta_{j} \ge \max\{\phi_{q_\iota}: \iota \in V \} = \phi_{q_{v}}, \label{VIa} \\
    & \beta_{j}  \ge   (h_{r+s_1} - h_{j} -\sum_{k \in L} \delta_{t_k} - \sum_{k\in V} \phi_{q_{k}})/( j -p -1), \label{VIb}  
\end{align}  
\end{subequations}
where the inequality in \eqref{VIa} follows from the fact that $A_j \cap \{k \in V: q_k >j \}= V$ because $q_\iota >j$ for all $\iota \in V$, and the equality in this relation is due to the definition of $\phi_{q_{\iota}}$ for $\iota \in V$ in \eqref{alpha}. Moreover, the inequality in \eqref{VIb} holds because the coefficient of $\beta_j$ in \eqref{A_3} is $j -p -1$, which is positive by the assumption of this case. 
Since both \eqref{VIa} and \eqref{VIb} impose a lower bound on $\beta_j$, we can simply pick
\begin{equation*}
\beta_j = \max\Big\{ \phi_{q_{v}}, (h_{r+s_1} - h_{j} -\sum_{k \in L} \delta_{t_k}- \sum_{k\in V} \phi_{q_{k}})/( j -p -1) \Big\}
\end{equation*}
to satisfy these conditions.

%\item[(VII)] Assume that $j \in Q$, $v=1$, and $q_1 \ge r+2=p+1$.
%In this case, $j = q_1$. Choose $A_j = \emptyset$. Hence, constraint \eqref{A_1} does not exist, and constraint \eqref{A_3} can be written as $\beta_j (j-p-1) \ge h_{r+1}-h_{r+2} -\phi_{q_1} -  \sum_{k \in L} \delta_{t_k} $. If $j=p+1$, then the coefficient of $\beta_j$ is zero, and this inequality is readily satisfied because of \eqref{alpha_1}. Otherwise, if $j>p+1$, we can select $\beta_j=\max\{0, (h_{r+1}-h_{r+2} -\phi_{q_1} -  \sum_{k \in L} \delta_{t_k})/(j-p-1)\}$ to satisfy the inequality. 

\item[(VII)] Assume that $j \in Q$ and $r+s_1 +1 \le j \leq p+1$. On the one hand, since $j \in Q$, we can write $j=q_i$ for some $1\le i \le v$. 
On the other hand, since $r+s_1 +1 \le j \leq p+1$, we can write that $j=r+s_\iota +1 $ for some $1 \le \iota \le v$.
We claim that $i \le \iota$.
Assume by contradiction that $i > \iota$.
Then, we must have $j = q_i \ge r+s_i +1 > r+s_{\iota} +1 = j$, where the equalities follow from the previous arguments for this case, the first inequality is implied by the theorem requirement for $q_i$, and the second inequality is due to the fact that $s_i > \iota$ as $i > \iota$ by definition, yielding a contradiction.
Next, we consider two sub-cases.
For the first sub-case, assume that $i<\iota$.
Choose $A_{j}=\big\{ k \in \{1,\dotsc,\iota\} : q_{k} \geq j \big\}$. It is clear that $\iota \in A_j$ as $q_{\iota} \ge r+s_\iota +1 =j=q_i$, where the equality across the chain is impossible since it would imply $q_\iota=q_i$. Thus, $\iota \in A_j$ and $q_\iota>j$. As a result, constraints 
\eqref{A_1} and \eqref{A_3} can be written as 
\begin{subequations}
\begin{align}
    &  \phi_{q_\iota}=\max\{\phi_{q_{k}}: k \in A_j, \; q_k >j \} \le \beta_{j} \le \min\{\phi_{q_{k}}: k \in V\setminus A_j, \; q_k >j\}=\phi_{q_{\iota+1}}, \label{VIIIa_1}\\
    & \phi_{q_{\iota}} \ge h_{r+s_1} - h_{j}  -\sum_{k \in L} \delta_{t_k}-  \phi_{j} - \sum_{\substack{k=1\\q_k > j}}^{\iota-1} \phi_{q_{k}} = h_{r+s_1} - h_{r + s_\iota +1}  -\sum_{k \in L} \delta_{t_k}- \sum_{\substack{k=1\\q_k \ge r + s_\iota +1}}^{\iota-1} \phi_{q_{k}}. \label{VIIIc_1}
\end{align}   
\end{subequations}
The left equality in \eqref{VIIIa_1} follows from the definition of $\phi_{q_{k}}$ for $k \in V$ in \eqref{alpha}, and the facts that $\iota \in A_j$ and $q_{\iota} > j$.
For the right equality in \eqref{VIIIa_1}, we use the fact that $V \setminus A_j \cap \{k \in V: q_k >j\}=\{\iota+1, \dotsc,v\}$, which follows from an argument similar to that of case (V),  discussed above. 
%We note that $V \setminus A_j \subseteq \{\iota, \dotsc, p-r\} \cup \{k \in V: q_k < r + \iota +1\}$. It is clear by the definition of $\iota$ that for $k=\iota+1, \dotsc, p-r$, we have $q_k \ge r + k + 1 > r+\iota+1=q_i=j$; thus, $k \in V \setminus A_j \cap \{k \in V: q_k >j\}$. On the other hand, for $k=1, \dotsc, \iota-1$, suppose that we have $q_k < r + \iota + 1$. Thus, by the definition of $\iota$, we have $q_k <j$; $k \notin V \setminus A_j \cap \{k \in V: q_k >j\}$. 
Moreover, the inequality in \eqref{VIIIc_1} is due to the fact that the coefficient of $\beta_j$ in \eqref{A_3} is zero, which can be shown using a similar argument to that of case (V). %$j-1-p+|\{ k \in V: k \in V \setminus A_j, \; q_k>j\}|=j-1-p+ p-r - \iota  =0$.
The equality in this relation follows from the facts that $j = r+s_{\iota} + 1$, and $\big\{k \in \{1,\dotsc,\iota-1\}: q_k > j\big\} \cup \{j\} = \big\{k \in \{1,\dotsc,\iota-1\}: q_k \geq j\big\}$ since $j = q_i$ for some $i \in \{1,\dotsc,\iota - 1\}$ by the assumption of this case.
Therefore, we can pick $\beta_j = \phi_{q_{\iota}}$, which satisfies \eqref{VIIIa_1} because of \eqref{alpha_2} as $\phi_{q_{\iota}} \leq \phi_{q_{\iota+1}}$.
Furthermore, \eqref{VIIIc_1} is implied by \eqref{alpha_2}.

For the second sub-case, assume that $i = \iota$.
Choose 
$A_{j}=\big\{ k \in \{1,\dotsc,\iota\} : q_{k} \ge j \big\}$. It is clear that $\iota \in A_j$ as $q_{\iota}=j$. 
Hence, constraints \eqref{A_1} and \eqref{A_3} can be written as 
\begin{subequations}
\begin{align}
    &  \phi^*_1=\max\{\phi_{q_{k}}: k \in A_j, \; q_k >j \} \le \beta_{j} \le \min\{\phi_{q_{k}}: k \in V\setminus A_j, \; q_k >j\}=\phi_{q_{\iota+1}}, \label{VIIIa_2}\\
    &  \phi_{q_{\iota}} \ge h_{r+s_1} - h_{j}  -\sum_{k \in L} \delta_{t_k} - \sum_{\substack{k=1\\q_k > j}}^{\iota} \phi_{q_{k}} =  h_{r+s_1} - h_{r + s_\iota +1} -\sum_{k \in L} \delta_{t_k} -\sum_{\substack{k=1\\q_k \ge r + s_\iota +1}}^{\iota-1} \phi_{q_{k}}, \label{VIIIc_2}
\end{align}   
\end{subequations}
where $0\le \phi^*_1 \le \phi_{q_{\iota-1}}$ since $\phi_{q_k} \leq \phi_{q_{\iota-1}}$ for all $k \in \{1,\dotsc,\iota - 1\}$. 
The right equality in \eqref{VIIIa_2} follows from an argument similar to that of the first sub-case above.
Moreover, the inequality in \eqref{VIIIc_2} is due to the fact that the coefficient of $\beta_j$ in \eqref{A_3} is zero, which can be computed similarly to case (V), discussed previously.
The equality in \eqref{VIIIc_2} follows from the facts that $j = r + s_\iota +1$ and $\big\{k \in \{1,\dotsc,\iota\}: q_k > j\big\} = \big\{k \in \{1,\dotsc,\iota-1\}: q_k \geq j\big\}$ since $j = q_i = q_{\iota}$ by the assumption of this case.
Therefore, we can pick $\beta_j = \phi_{q_{\iota+1}}$, which satisfies \eqref{VIIIa_2} because of \eqref{alpha_2} as $\phi^*_1 \le \phi_{q_{\iota-1}} \leq \phi_{q_{\iota+1}}$.
Furthermore, \eqref{VIIIc_2} is directly implied by \eqref{alpha_2}.

\item[(VIII)] Assume that $j \in Q$ and $j>p+1$.
We can write $j = q_{\iota}$ for some $\iota = 1, \dotsc, v$. We choose $A_{j}=\{ k \in V : q_{k} > j \}$. 
Hence, constraints \eqref{A_1} and \eqref{A_3} for $j$ can be written as 
\begin{align*}
     &\beta_{j} \ge \max\{\phi_{q_{k}}: k \in A_j\}:=\phi^*_2,\\
     & \beta_{j} \ge (h_{r+s_1}- h_{j} -\sum_{k \in L} \delta_{t_k} - \phi_{q_\iota} -  \sum\nolimits_{k \in A_j} \phi_{q_k})/(j-p-1),
\end{align*}
where $\phi^*_2 \ge \phi_{q_{1}}\ge0$
and $j-p-1 > 0$ by assumption.
Therefore, we can select 
$\beta_{j} = \max \big\{ \phi^*_2, (h_{r+s_1}- h_{j} -\sum_{k \in L} \delta_{t_k} - \phi_{q_\iota} -  \sum\nolimits_{k \in A_j} \phi_{q_k})/(j-p-1) \big\}$ to satisfy the above inequalities. 

\item[(IX)] Assume that $j \in P$. We choose $A_j=\emptyset$. In this case, constraints \eqref{A_1} and \eqref{A_3} reduce to 
\begin{align*}
    & \beta_j \le \min\{\phi_{q_\iota}: \iota \in V \}, \\
    & \beta_{j} (j-1 - p + v) \ge  (h_{t_{a_j+1}} - h_j - \sum_{\substack{\iota \in L\\ t_\iota < j}} \delta_{t_\iota}).  
\end{align*} 
We can write that $j-1-p+v = j - 1 - r - (p-r-v) \le j-1-r<0$ where the first inequality follows from $p-r-v \ge 0$ and the second inequality holds by assumption. 
Using this result together with the fact that $h_j=h_{t_{a_j+1}}$ as $j \in P$, we can simplify the above inequalities as 
\begin{align*}
    & \beta_j \le \min\{\phi_{q_\iota}: \iota \in V \}, \\
    & \beta_{j} \le \sum_{\substack{\iota \in L\\ t_\iota < j}} \delta_{t_\iota}/(p-v + 1 -j).  
\end{align*} 
Because both right-hand-side values of these inequalities are non-negative, we can simply pick $\beta_j = 0$ to satisfy the conditions in Theorem~\ref{thm:chance_lift_generic}. 
This completes the proof. 
\end{itemize}  
\end{proof}

Next, we show that the family of \BLP inequalities in Theorem~\ref{thm: chance_liftVIAgeneric_permutation} encompasses the existing results in the literature, represented by Proposition~\ref{prop:chance_lift_permutation}, as a special case under the uniform probability distribution.

\begin{corollary} \label{cor:special_case}
    Consider the setting of Theorem~\ref{thm: chance_liftVIAgeneric_permutation} for the mixing set with a cardinality constraint $\mac{F}_c^=$.
    By setting $\delta_{t_{\iota}} = 0$ for all $\iota \in L$, the \BLP inequality \eqref{eq:chance_lift_ineq_generic_permutation} reduces to the inequality \eqref{eq:chance_lift_permutation} in Proposition~\ref{prop:chance_lift_permutation}.
\end{corollary}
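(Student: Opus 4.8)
The plan is to verify that the substitution $\delta_{t_\iota}=0$ turns each ingredient of Theorem~\ref{thm: chance_liftVIAgeneric_permutation} into the corresponding ingredient of Proposition~\ref{prop:chance_lift_permutation} once the latter is specialized to the uniform distribution; the whole argument is bookkeeping about parameterizations, with no analytic content.

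First I would check that $\delta_{t_\iota}=0$ for all $\iota\in L$ is an admissible choice in Theorem~\ref{thm: chance_liftVIAgeneric_permutation}. Since $h_1\ge\cdots\ge h_m$ and $t_1<\cdots<t_l$ with $t_{l+1}:=r+s_1>t_l$, we have $h_{t_{\iota+1}}-h_{t_\iota}\le 0=\delta_{t_\iota}$; the partial-sum conditions $\sum_{\iota=1}^{k-1}\delta_{t_\iota}=0\ge 0$ are trivial; and $\sum_{\iota\in L}\delta_{t_\iota}=0\le h_{r+s_1}-h_{r+s_2}$ because $s_2=s_1+1$ forces $r+s_2>r+s_1$, hence $h_{r+s_2}\le h_{r+s_1}$. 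Next I would substitute $\delta_{t_\iota}=0$ directly into \eqref{alpha} and \eqref{eq:chance_lift_ineq_generic_permutation}: the recursion \eqref{alpha_1}--\eqref{alpha_2} becomes $\phi_{q_1}=h_{r+s_1}-h_{r+s_2}$ and $\phi_{q_\iota}=\max\{\phi_{q_{\iota-1}},\,h_{r+s_1}-h_{r+s_\iota+1}-\sum_{k=1,\,q_k\ge r+s_\iota+1}^{\iota-1}\phi_{q_k}\}$, and \eqref{eq:chance_lift_ineq_generic_permutation} becomes $z+\sum_{\iota=1}^{l}(h_{t_\iota}-h_{t_{\iota+1}})x_{t_\iota}+\sum_{\iota=1}^{v}\phi_{q_\iota}(1-x_{q_\iota})\ge h_{t_1}$ with $t_{l+1}:=r+s_1$ --- which matches \eqref{eq:chance_lift_permutation} and its $\phi$-recursion term for term, provided the index condition ``$q_k\ge r+s_\iota+1$'' there coincides with ``$q_k\ge r+\min\{1+s_\iota,s_{\iota+1}\}$'' and the admissible ranges of $r,l,v,P,Q$ agree.

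The key step, and the only thing requiring any thought, is to show that under the uniform distribution the parameterization of Proposition~\ref{prop:chance_lift_permutation} collapses onto that of Theorem~\ref{thm: chance_liftVIAgeneric_permutation}. With $\pi_i=1/m$ we have $\vartheta=p=\lfloor m\varepsilon\rfloor$, so $v\in\{1,\dots,\vartheta-r\}=\{1,\dots,p-r\}$ and the ranges of $r,l,P$ already agree. Since $\sum_{k=\iota}^{v}\pi_{w_k}=(v-\iota+1)/m$ is independent of the permutation $w$, the two inequalities in \eqref{s} reduce to $r+s_\iota+v-\iota+1>p$ and $r+s_\iota+v-\iota\le p$; as these quantities are integers this forces $r+s_\iota+v-\iota=p$, i.e.\ $s_\iota=p-r-v+\iota$ for every $\iota\in\{1,\dots,v\}$, which is precisely the choice of $s_\iota$ made in Theorem~\ref{thm: chance_liftVIAgeneric_permutation} (and is consistent with $s_{v+1}=p-r+1$). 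Then $s_{\iota+1}=s_\iota+1=1+s_\iota$, so $\min\{1+s_\iota,s_{\iota+1}\}=1+s_\iota$, which makes the $\phi$-recursion index conditions identical and turns $q_\iota\ge r+\min\{1+s_\iota,s_{\iota+1}\}$ into $q_\iota\ge r+s_\iota+1$, matching $r+s_\iota+1\le q_\iota\le m$. Combining the three paragraphs shows that the $\delta=0$ subfamily of \eqref{eq:chance_lift_ineq_generic_permutation} is exactly the family \eqref{eq:chance_lift_permutation} for $\mac{F}_c^=$, which is the claim. I expect no genuine obstacle here; the only place to be careful is confirming that \eqref{s} pins down the arithmetic progression $s_\iota=p-r-v+\iota$ and that this stays compatible with the constraint $s_{v+1}=p-r+1$ imposed in Proposition~\ref{prop:chance_lift_permutation}.
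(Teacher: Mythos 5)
Your proposal is correct and follows essentially the same route as the paper's proof: both hinge on observing that under the uniform distribution $p=\vartheta=\lfloor m\varepsilon\rfloor$, that condition \eqref{s} then forces $s_\iota=p-r-v+\iota$ (so $\min\{1+s_\iota,s_{\iota+1}\}=1+s_\iota$), and that with $\delta_{t_\iota}=0$ the $\phi$-recursions and inequality forms coincide term by term. You merely spell out the integrality argument pinning down $s_\iota$ and the admissibility of $\delta=\vc{0}$, which the paper leaves implicit.
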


\begin{proof}
We have previously established in Section~\ref{subsec:MIP} that, for $\mac{F}_c^=$, it holds that $p = \tau = \lfloor m\varepsilon \rfloor$.
Consequently, the requirement \eqref{s} in Proposition~\ref{prop:chance_lift_permutation} implies $s_\iota = p - r - v + \iota$ for all $\iota \in {1, \dotsc, v}$.
Thus, we obtain $r + \min\{1 + s_\iota, s_{\iota+1}\} = r + s_\iota + 1$.
This ensures that the parameter definitions and assumptions in Theorem~\ref{thm: chance_liftVIAgeneric_permutation} and Proposition~\ref{prop:chance_lift_permutation} coincide when $\delta_{t_{\iota}} = 0$ for all $\iota \in L$.
As a result, the definition of $\phi_{q_\iota}$ is consistent across Theorem~\ref{thm: chance_liftVIAgeneric_permutation} and Proposition~\ref{prop:chance_lift_permutation}, leading to the equivalence between the \BLP inequality \eqref{eq:chance_lift_ineq_generic_permutation} and the inequality \eqref{eq:chance_lift_permutation}.
\end{proof}

The following example illustrates the result of Corollary~\ref{cor:special_case} by presenting a facet-defining inequality for $\conv(\mac{F}_c^=)$ that can be derived from Theorem~\ref{thm: chance_liftVIAgeneric_permutation} but not from Proposition~\ref{prop:chance_lift_permutation}, thereby demonstrating that the family of \BLP inequalities \eqref{eq:chance_lift_ineq_generic_permutation} \textit{strictly} subsumes the existing results.

\begin{example}
\label{ex:9}
Consider an instance of $\mac{F}_c^=$, with $m=10$, $\{h_1, \dotsc, h_m\}=\{20,18,14,11,6,5,4,3,2,1\}$, and $p=4$.
We apply Theorem~\ref{thm: chance_liftVIAgeneric_permutation} by selecting $r=1$, $l = 1$, $v=3$, $P=\{1\}$, and $Q=\{6,8,7\}$, i.e., $q_1=6$, $q_2=8$, and $q_3=7$.
Moreover, we choose $\delta_1=1$. 
Following the equations \eqref{alpha_1} and \eqref{alpha_2}, we can calculate $\phi_6=3$, $\phi_7=5$, and $\phi_8=3$. 
As a result, \eqref{eq:chance_lift_ineq_generic_permutation} yields the \BLP inequality
\begin{equation*}
    z+ 3 x_1 + 3(1-x_6)+ 5(1-x_{7}) + 3(1-x_{8})\ge 20,
\end{equation*}
which can be shown to be facet-defining for $\conv(\mac{F}_c^=)$.
However, this inequality cannot be derived from Proposition~\ref{prop:chance_lift_permutation}, partly because the coefficient of $x_1$, which is $3$, does not match $(h_1 - h_2) = 2$ as prescribed therein.
\hfill	$\blacksquare$
\end{example}

%%\smallskip
As demonstrated in Corollary~\ref{cor:special_case}, when the values of $\delta_{t_\iota}$ are fixed to zero, the structure of the \BLP inequality \eqref{eq:chance_lift_ineq_generic_permutation} coincides with that of Proposition~\ref{prop:chance_lift_permutation}, which in turn aligns with Proposition~\ref{prop:chance_lift_simge}; see the proof of Corollary~\ref{cor:special_case}.
Consequently, for any fixed values of $\delta_{t_\iota}$ with $\iota \in V$, we can employ an argument similar to that in Section 3.1 of \cite{kuccukyavuz2012mixing} to develop an efficient separation algorithm that identifies the parameters in Theorem~\ref{thm: chance_liftVIAgeneric_permutation} yielding the most violated inequality at a given point.

%%\smallskip
Although Theorem~\ref{thm: chance_liftVIAgeneric_permutation} generalizes existing results in the literature for mixing sets with cardinality constraints, it still constitutes a subfamily of the \BLP inequalities described in Theorem~\ref{thm:chance_lift_generic}. This is because Theorem~\ref{thm: chance_liftVIAgeneric_permutation} imposes several restrictions on the choice of parameters in Theorem~\ref{thm:chance_lift_generic}, which, in turn, enables the derivation of explicit valid inequalities without the need to verify the existence of a vector $\vc{\beta}$ satisfying the conditions in \eqref{A}.
Some of these restrictions are as follows:
(i) In Theorem~\ref{thm: chance_liftVIAgeneric_permutation}, we have $v = |Q| \le p - r$, whereas in Theorem~\ref{thm:chance_lift_generic}, $v = |Q| \le p - r + |P|$;
(ii) In Theorem~\ref{thm: chance_liftVIAgeneric_permutation}, the range of $Q$ is restricted to $\{r + s_1 + 1, \dotsc, m\}$, with $s_1 = p - r - v + 1 \ge 1$, whereas in Theorem~\ref{thm:chance_lift_generic}, the range of $Q$ may be within $\{r + 1, \dotsc, m\}$; and
(iii) Theorem~\ref{thm: chance_liftVIAgeneric_permutation} imposes additional restrictions on $\delta_{t_{\iota}}$ compared to Theorem~\ref{thm:chance_lift_generic}.

To illustrate, the following example presents a facet-defining inequality that can be derived from Theorem~\ref{thm:chance_lift_generic} but not from Theorem~\ref{thm: chance_liftVIAgeneric_permutation}, indicating that the former theorem yields a richer family of valid inequalities---even under the special case of uniform probability distributions---as further demonstrated by the computational experiments in Section~\ref{sec:numerical}.

\begin{example}
\label{ex:10}
Consider the instance of $\mac{F}_c^=$, given in Example \ref{ex:1}, with $m=10$, $\{h_1, \dotsc, h_m\}=\{20,18,14,11,6,5,4,3,2,1\}$, and $p=4$. We established in Example~\ref{ex:3} that inequality~\eqref{eq:Ex1} belongs to the family of \BLP inequalities~\eqref{eq:chance_lift_ineq_generic} derived in Theorem~\ref{thm:chance_lift_generic}. In contrast, we show here that this inequality cannot be represented by the family of valid inequalities in Theorem~\ref{thm: chance_liftVIAgeneric_permutation}. 
Based on the structure of inequality~\eqref{eq:Ex1}, for this problem instance, we must have $r=4$, $P=\{1,4\}$, and $Q=\{5,6\}$. However, since $p=r$, the conditions in Theorem~\ref{thm: chance_liftVIAgeneric_permutation} enforce $Q=\emptyset$, which contradicts with the requirement $Q=\{5,6\}$ for this inequality. Moreover, we must choose $\delta_1=\delta_4=-3$ to ensure that the coefficient of variables in $P$, namely $x_1$ and $x_4$, are $(h_1-h_4+\delta_1)=6$ and $(h_4-h_5+\delta_4)=5$, respectively. While $\delta_1\ge h_4-h_1=-9$ and $\delta_4 \ge h_5-h_4=-6$, with $\delta_1 + \delta_2 \le h_5  - h_6 = 1$ as $s_1=1$ and $s_2=2$, the requirement that $\delta_1 \ge 0$ is not satisfied. 
As a result, inequality~\eqref{eq:Ex1} cannot be represented by the family of inequalities in Theorem~\ref{thm: chance_liftVIAgeneric_permutation}. 
\hfill	$\blacksquare$
\end{example}

%%\smallskip
We conclude this section by summarizing the relationship between the families of inequalities developed in the literature (Propositions~\ref{prop:mixing}--\ref{prop:chance_lift_permutation}) and the \BLP inequalities established in this work (Theorems~\ref{thm:chance_lift_generic}--\ref{thm: chance_liftVIAgeneric_permutation}) in characterizing $\conv(\mac{F}_c^=)$ under a uniform probability distribution, as illustrated in Figure~\ref{fig: uniform_prob}.

\begin{figure}[htbp]
    \centering
    \begin{tikzpicture}[line width=0.9pt, font=\small]

    % --- canvas scale ---
    \def\W{8.2}   % rectangle width
    \def\H{3.6}    % rectangle height

    % --- frame rectangle ---
    \draw (-\W/2-0.7,-\H/2) rectangle (\W/2,\H/2);
    \node[above right] at (-\W/2-0.7,-\H/2) {$\conv(\mac{F}_c^=)$};

    % --- parameters ---
    \def\cy{0}             % vertical center
    \def\anglestart{10}    % open gap start (degrees)
    \def\angleend{350}     % open gap end

    % --- nested, right-shifted ellipses (each contains the previous) ---
    % Format: center-x / x-radius / y-radius / label
    \foreach \cx/\a/\b/\c/\i in {%
      -3.3/0.7/0.4/Prop~\ref{prop:mixing}/1,   % smaller innermost ellipse
      -2.8/1.3/0.6/Prop~\ref{prop:chance}/2,
      -2.3/1.9/0.8/Prop~\ref{prop:chance_lift}/3,
      -1.8/2.5/1/Prop~\ref{prop:chance_lift_simge}/4,
      -1.3/3.1/1.2/Prop~\ref{prop:chance_lift_permutation}/5,
      -0.8/3.7/1.4/Thm~\ref{thm: chance_liftVIAgeneric_permutation}/6, -0.3/4.3/1.6/Thm~\ref{thm:chance_lift_generic}/7
    }{
        \draw (\cx,\cy) ellipse [x radius=\a, y radius=\b, start angle=\anglestart, end angle=\angleend];
        \node[anchor=east] at (\cx+\a, \cy) {\c};
    }

    \end{tikzpicture}
    \caption{Relationship between existing inequalities (Propositions~\ref{prop:mixing}--\ref{prop:chance_lift_permutation}) and \BLP inequalities (Theorems~\ref{thm:chance_lift_generic}--\ref{thm: chance_liftVIAgeneric_permutation}) in characterizing $\conv(\mac{F}_c^=)$ under a uniform probability distribution.}
    \label{fig: uniform_prob}
\end{figure}
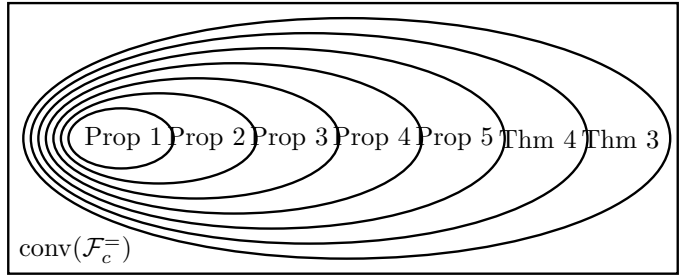

%%%%%%%%%%%%%%%%%%%%%%%%%%%%%%%%%%%%%%%%%%%%%%%%%%%%%%%%%%%%%%%%%%
\section{Computational Experiments} \label{sec:numerical}

In this section, we present preliminary computational results that demonstrate the effectiveness of the families of \BLP inequalities derived in Sections~\ref{subsec:BLP family-general} and \ref{subsec:BLP family-special} in characterizing the convex hull of a chance-constrained set.
Specifically, we consider two sets of benchmark instances for the mixing set with a cardinality constraint that have been studied in the literature, as described below: 

\begin{example}
\label{ex: Leudtke}
Instances of $\mac{F}_c^=$ with $m \in \{1, \dotsc, 10\}$, where the values of $\{h_1, \dotsc, h_m\}$ correspond to the first $m$ elements of the sequence $\{20, 18, 14, 11, 6, 5, 4, 3, 2, 1\}$, and $p$ is selected from the range $\{1,2, \dotsc, m\}$, as presented in \cite{luedtke2010integer}.
\end{example}

\begin{example}
\label{ex: Simge}
Instances of $\mac{F}_c^=$ with $m \in \{1, \dotsc, 10\}$, where the values of $\{h_1, \dotsc, h_m\}$ correspond to the first $m$ elements of the sequence $\{40, 38, 34, 31, 26, 16, 8, 4, 2, 1\}$, and $p$ is selected from the range $\{1,2, \dotsc, m\}$, as presented in \cite{kuccukyavuz2012mixing}.
\end{example}

To compute the facet-defining inequalities of the convex hull for each configuration, we used a Python wrapper for the Qhull library \cite{qhull}.
All experiments were conducted on a Linux Ubuntu 20.04 system running on a PC equipped with an Intel Core i7-9700 3.00 GHz processor and 32 GB of RAM.
For $m=10$ and $ p\geq 5$, we were unable to compute the entire convex hull within 12 hours, which further highlights the complexity of these problems. 

Tables~\ref{T: Results_Luedtke} and \ref{T: Results_Simge} illustrate the strength of existing families of valid inequalities, as characterized by Proposition~\ref{prop:chance_lift_permutation}, as well as the strength of the \BLP inequalities proposed in Theorems~\ref{thm:chance_lift_generic} and \ref{thm: chance_liftVIAgeneric_permutation}, in describing the convex hull for Examples~\ref{ex: Leudtke} and \ref{ex: Simge}, respectively.
In each table, we report the percentage of facet-defining inequalities of the convex hull that are captured by the inequalities from Proposition~\ref{prop:chance_lift_permutation} and Theorems~\ref{thm:chance_lift_generic} and \ref{thm: chance_liftVIAgeneric_permutation}, under columns labeled with the corresponding results.
The column ``Imp.$^{\ref{thm: chance_liftVIAgeneric_permutation}}$'' reports the improvement in convex hull coverage achieved by the subfamily of \BLP inequalities in Theorem~\ref{thm: chance_liftVIAgeneric_permutation} over the coverage provided by Proposition~\ref{prop:chance_lift_permutation}. Similarly, the column ``Imp.$^{\ref{thm:chance_lift_generic}}$'' shows the additional coverage achieved by the general \BLP inequalities in Theorem~\ref{thm:chance_lift_generic} over those in Theorem~\ref{thm: chance_liftVIAgeneric_permutation}. Finally, the column ``Total Imp.'' reports the overall improvement obtained by combining the \BLP inequalities from Theorems~\ref{thm:chance_lift_generic} and \ref{thm: chance_liftVIAgeneric_permutation}, relative to the baseline established by Proposition~\ref{prop:chance_lift_permutation}.
We note that when $p=m$, $\mac{F}^=$ reduces to a mixing set \eqref{eq:mixing}, for which the star inequalities in Proposition~\ref{prop:mixing} are sufficient to describe the convex hull; the same holds for Theorems~\ref{thm:chance_lift_generic} and \ref{thm: chance_liftVIAgeneric_permutation}. Similarly, when $p=1$, there is {\it only} one facet-defining inequality of the form $z+ (h_1-h_2) z_1 \ge h_m$, which is captured by the strengthened star inequalities in Proposition~\ref{prop:chance} as well as by Theorems~\ref{thm:chance_lift_generic} and \ref{thm: chance_liftVIAgeneric_permutation}. Therefore, we exclude these trivial cases when $p=1$ and $p=m$ from Tables~\ref{T: Results_Luedtke} and \ref{T: Results_Simge}.

Observe that for Examples~\ref{ex: Leudtke} and \ref{ex: Simge}, the convex hull is fully characterized by existing results only when $p=2$ or $p=m-1$. This indicates that our inequalities are equally effective in these cases, as also stated in Corollary~\ref{cor:special_case}. In contrast, when the existing results do not fully characterize the convex hull, Theorems~\ref{thm:chance_lift_generic} and \ref{thm: chance_liftVIAgeneric_permutation} complement them to achieve full characterization in many additional cases. 
Specifically, Theorem~\ref{thm: chance_liftVIAgeneric_permutation} provides additional characterization ranging from $[7.69\%,25.54\%]$ for Example~\ref{ex: Leudtke} and $[7.68\%,25.54\%]$ for Example~\ref{ex: Simge}. Further, Theorem~\ref{thm:chance_lift_generic} contributes an additional $[7.69\%,46.61\%]$ and $[7.69\%,42.32\%]$ characterization for Examples~\ref{ex: Leudtke} and \ref{ex: Simge}, respectively. 
Notably, for Example~\ref{ex: Leudtke}, Theorem~\ref{thm:chance_lift_generic} fully characterizes the convex hull in all but one of the remaining 17 cases.
In the exceptional case, when $m=9$ and $p=7$, it still achieves a coverage of $99.77$\% of the convex hull. 
In these instances, our \BLP inequalities provide a further improvement of $[15.38\%,68.05\%]$ over the existing results in Proposition~\ref{prop:chance_lift_permutation}.
For Example~\ref{ex: Simge}, there are six out of 17 remaining cases where Theorem~\ref{thm:chance_lift_generic} captures at least $90.92$\% of the convex hull, offering an additional $[15.38\%,62.77\%]$ improvement over prior results.
In summary, these computational results highlight that the family of \BLP inequalities introduced in this work advances our understanding of the polyhedral structure of chance-constrained models, offering substantial improvements over existing formulations in both coverage and completeness. %For Example \ref{ex: Leudtke}, there is at least $15.38$ \% and at most $68.05$ \% of the convex hull that is further characterized, whereas for Example \ref{ex: Simge}, there is at least $15.38$ \% and at most $62.77$ \% improvement. 

\begin{table}
\centering
\caption{Comparison between existing and new results in characterizing the convex hull for Example \ref{ex: Leudtke}.}
\label{T: Results_Luedtke}
\begin{tabular}{ll|l|ll|ll|l}
\toprule
$m$ & $p$ & \makecell[l]{Proposition~\ref{prop:chance_lift_permutation} \\(\%)} & \makecell[l]{Theorem~\ref{thm: chance_liftVIAgeneric_permutation}\\ (\%)}  & 
\makecell[l]{Imp.$^{\ref{thm: chance_liftVIAgeneric_permutation}}$ \\(\%)}    & 
\makecell[l]{Theorem~\ref{thm:chance_lift_generic} \\ (\%)} & 
\makecell[l]{Imp.$^{\ref{thm:chance_lift_generic}}$ \\(\%)}  & 
\makecell[l]{Total Imp. \\(\%)}  \\ 
\midrule

3 & 2 & 100.0 & 100.0 & - & 100.0 & - & - \\
\midrule
4 & 2 & 100.0 & 100.0 & - & 100.0 & - & - \\
4 & 3 & 100.0 & 100.0 & - & 100.0 & - & - \\
\midrule
5 & 2 & 100.0 & 100.0 & - & 100.0 & - & - \\
5 & 3 & 84.62 & 92.31 & 7.69 & 100.0 & 7.69 & 15.38 \\
5 & 4 & 100.0 & 100.0 & - & 100.0 & - & - \\
\midrule
6 & 2 & 100.0 & 100.0 & - & 100.0 & - & - \\
6 & 3 & 72.73 & 86.36 & 13.63 & 100.0 & 13.64 & 27.27 \\
6 & 4 & 76.32 & 86.84 & 10.52 & 100.0 & 13.16 & 23.68 \\
6 & 5 & 100.0 & 100.0 & - & 100.0 & - & - \\
\midrule
7 & 2 & 100.0 & 100.0 & - & 100.0 & - & - \\
7 & 3 & 64.71 & 82.35 & 17.64 & 100.0 & 17.65 & 35.29 \\
7 & 4 & 59.3 & 76.74 & 17.44 & 100.0 & 23.26 & 40.7 \\
7 & 5 & 63.0 & 75.0 & 12.0 & 100.0 & 25.0 & 37.0 \\
7 & 6 & 100.0 & 100.0 & - & 100.0 & - & - \\
\midrule
8 & 2 & 100.0 & 100.0 & - & 100.0 & - & - \\
8 & 3 & 59.18 & 79.59 & 20.41 & 100.0 & 20.41 & 40.82 \\
8 & 4 & 48.81 & 70.24 & 21.43 & 100.0 & 29.76 & 51.19 \\
8 & 5 & 41.95 & 60.4 & 18.45 & 100.0 & 39.6 & 58.05 \\
8 & 6 & 61.43 & 71.43 & 10.0 & 100.0 & 28.57 & 38.57 \\
8 & 7 & 100.0 & 100.0 & - & 100.0 & - & - \\
\midrule
9 & 2 & 100.0 & 100.0 & - & 100.0 & - & - \\
9 & 3 & 55.22 & 77.61 & 22.39 & 100.0 & 22.39 & 44.78 \\
9 & 4 & 41.98 & 65.87 & 23.89 & 100.0 & 34.13 & 58.02 \\
9 & 5 & 31.95 & 53.39 & 21.44 & 100.0 & 46.61 & 68.05 \\
9 & 6 & 37.78 & 54.07 & 16.29 & 100.0 & 45.93 & 62.22 \\
9 & 7 & 60.81 & 70.95 & 10.14 & 99.77 & 28.82 & 38.96 \\
9 & 8 & 100.0 & 100.0 & - & 100.0 & - & - \\
\midrule
10 & 2 & 100.0 & 100.0 & - & 100.0 & - & - \\
10 & 3 & 52.27 & 76.14 & 23.87 & 100.0 & 23.86 & 47.73 \\
10 & 4 & 37.23 & 62.77 & 25.54 & 100.0 & 37.23 & 62.77 \\

\bottomrule
\end{tabular}
\end{table}

\begin{table}
\centering
\caption{Comparison between existing and new results in characterizing the convex hull for Example \ref{ex: Simge}.}
\label{T: Results_Simge}
\begin{tabular}{ll|l|ll|ll|l}
\toprule
$m$ & $p$ & \makecell[l]{Proposition~\ref{prop:chance_lift_permutation} \\(\%)} & \makecell[l]{Theorem~\ref{thm: chance_liftVIAgeneric_permutation}\\ (\%)}  & 
\makecell[l]{Imp.$^{\ref{thm: chance_liftVIAgeneric_permutation}}$ \\(\%)}    & 
\makecell[l]{Theorem~\ref{thm:chance_lift_generic} \\ (\%)} & 
\makecell[l]{Imp.$^{\ref{thm:chance_lift_generic}}$ \\(\%)}  & 
\makecell[l]{Total Imp. \\(\%)}  \\ 
\midrule

3 & 2 & 100.0 & 100.0 & - & 100.0 & - & - \\
\midrule
4 & 2 & 100.0 & 100.0 & - & 100.0 & - & - \\
4 & 3 & 100.0 & 100.0 & - & 100.0 & - & - \\
\midrule
5 & 2 & 100.0 & 100.0 & - & 100.0 & - & - \\
5 & 3 & 84.62 & 92.31 & 7.69 & 100.0 & 7.69 & 15.38 \\
5 & 4 & 100.0 & 100.0 & - & 100.0 & - & - \\
\midrule
6 & 2 & 100.0 & 100.0 & - & 100.0 & - & - \\
6 & 3 & 72.73 & 86.36 & 13.63 & 100.0 & 13.64 & 27.27 \\
6 & 4 & 76.32 & 86.84 & 10.52 & 100.0 & 13.16 & 23.68 \\
6 & 5 & 100.0 & 100.0 & - & 100.0 & - & - \\
\midrule
7 & 2 & 100.0 & 100.0 & - & 100.0 & - & - \\
7 & 3 & 64.71 & 82.35 & 17.64 & 100.0 & 17.65 & 35.29 \\
7 & 4 & 59.3 & 76.74 & 17.44 & 100.0 & 23.26 & 40.7 \\
7 & 5 & 70.87 & 80.58 & 9.71 & 97.09 & 16.51 & 26.22 \\
7 & 6 & 100.0 & 100.0 & - & 100.0 & - & - \\
\midrule
8 & 2 & 100.0 & 100.0 & - & 100.0 & - & - \\
8 & 3 & 59.18 & 79.59 & 20.41 & 100.0 & 20.41 & 40.82 \\
8 & 4 & 48.81 & 70.24 & 21.43 & 100.0 & 29.76 & 51.19 \\
8 & 5 & 48.97 & 64.01 & 15.04 & 92.92 & 28.91 & 43.95 \\
8 & 6 & 62.8 & 70.4 & 7.6 & 96.0 & 25.6 & 33.2 \\
8 & 7 & 100.0 & 100.0 & - & 100.0 & - & - \\
\midrule
9 & 2 & 100.0 & 100.0 & - & 100.0 & - & - \\
9 & 3 & 55.22 & 77.61 & 22.39 & 100.0 & 22.39 & 44.78 \\
9 & 4 & 41.98 & 65.87 & 23.89 & 100.0 & 34.13 & 58.02 \\
9 & 5 & 37.51 & 54.83 & 17.32 & 91.34 & 36.51 & 53.83 \\
9 & 6 & 35.88 & 48.6 & 12.72 & 90.92 & 42.32 & 55.04 \\
9 & 7 & 53.59 & 61.27 & 7.68 & 95.66 & 34.39 & 42.07 \\
9 & 8 & 100.0 & 100.0 & - & 100.0 & - & - \\
\midrule
10 & 2 & 100.0 & 100.0 & - & 100.0 & - & - \\
10 & 3 & 52.27 & 76.14 & 23.87 & 100.0 & 23.86 & 47.73 \\
10 & 4 & 37.23 & 62.77 & 25.54 & 100.0 & 37.23 & 62.77 \\

\bottomrule

\end{tabular}
\end{table}

\section{Conclusion and Future Research} \label{sec:discussion}

Chance-constrained programming offers a versatile framework for modeling optimization problems under uncertainty; however, its mixed-integer reformulations introduce nonconvex mixing sets with knapsack constraints, leading to weak continuous relaxations and increased computational complexity.
Prior research has developed various families of valid inequalities, most notably {\it mixing inequalities}, to better describe these sets. However, existing approaches capture only a limited portion of the convex hull and rely on case-specific extensions of known inequalities, restricting the discovery of fundamentally new structures and systematic convexification methods. These gaps motivate the need for a unifying framework that can generate stronger and more comprehensive representations of chance-constrained sets.

To address these challenges, this paper introduces a new bilinear convexification framework that reformulates mixing sets with a knapsack constraint as bilinear sets over a simplex in a lifted space. The proposed aggregation-based procedure enables systematic derivation of facet-defining inequalities in the original space of variables, generalizing and unifying known families while revealing new classes of valid inequalities that capture much larger portions of the convex hull. Preliminary computational studies demonstrate that the new inequalities describe over 90\% of the facets of benchmark instances, offering significant strengthening over existing relaxations. 

The proposed aggregation-based convexification procedure advances the polyhedral understanding of CCPs and opens new avenues for research.
The class of valid inequalities proposed in this paper is derived from a single-row relaxation of the chance constraint, which yields a single mixing set with a knapsack constraint. As previously discussed, any facet-defining inequality for such a set is also facet-defining for the joint CCPs, where the key substructure is the intersection of multiple mixing sets with a shared knapsack constraint. However, there also exist facet-defining inequalities for joint CCPs that cannot be obtained from single-row relaxations. A line of research has focused on extending the strength of single-row relaxations to multi-row intersections, see, e.g., \cite{luedtke2010integer,zhao2017polyhedral,liu2019intersection}. 
An interesting direction for future research is to investigate the convexification of the intersection of multiple mixing sets with a knapsack constraint through the lens of the bilinear reformulation introduced in this work, thereby enabling the application of the proposed aggregation-based lift-and-project technique.

% Acknowledgments here
%\ACKNOWLEDGMENT{We would like to express our sincere gratitude to [acknowledge individuals, organizations, or institutions] for their invaluable contributions to this research. We are also grateful to [mention any additional acknowledgements, such as technical assistance, data providers, or colleagues] for their support and assistance throughout the course of this work.}

% References here (outcomment the appropriate case)

% CASE 1: BiBTeX used to constantly update the references
%   (while the paper is being written).
\bibliographystyle{chicago} % outcomment this and next line in Case 1
\bibliography{ref} % if more than one, comma separated

\end{document}